\documentclass[a4paper,reqno]{amsart}

\usepackage[a4paper,width={16cm},left=2.5cm,bottom=3cm, top=3cm]{geometry}
\usepackage{enumerate}
\usepackage{yfonts}

\usepackage[english]{babel}

\usepackage{a4wide}
\usepackage[dvipsnames]{xcolor}
\usepackage{amsmath}
\usepackage{amssymb}
\usepackage{amsfonts}
\usepackage{amsthm,graphicx}
\usepackage{comment}
\usepackage{csquotes}
\usepackage{esint}
\usepackage{hyperref}
\hypersetup{
	linktocpage=true,
	colorlinks=true,
	linkcolor=blue!70!black,
	citecolor=orange!40!red,
	urlcolor=magenta!80!black,
}

\usepackage{mathtools}
\mathtoolsset{showonlyrefs}

\numberwithin{equation}{section}
\newtheorem{theorem}{Theorem}[section]
\newtheorem{lemma}[theorem]{Lemma}
\newtheorem{corollary}[theorem]{Corollary}

\newtheorem{proposition}[theorem]{Proposition}
\theoremstyle{definition}  
\newtheorem{definition}[theorem]{Definition}

\newtheorem{remark}[theorem]{Remark}

\newcommand{\mc}{\mathcal}
\newcommand{\mb}{\mathbb}

\newcommand{\la}{\lambda}
\newcommand{\norm}[1]{\left\lVert#1\right\rVert}
\newcommand{\pd}[2]{\frac{\partial#1}{\partial#2}}

\newcommand{\R}{\mb{R}}

\newcommand{\N}{\mb{N}}

\newcommand{\e}{\varepsilon}

\newcommand{\dive}{\mathop{\rm{div}}}

\newcommand{\nota}[1]{\marginpar{\color{magenta}\tiny #1}}

\usepackage{subfig}
\usepackage{tikz}
\usepackage{xcolor}
\usepackage{pgfplots}
\pgfplotsset{compat=1.18}
\usepgfplotslibrary{fillbetween}
\usepackage{mathrsfs}
\usetikzlibrary{arrows}
\usetikzlibrary{shadings}
\makeatletter

\date{\today}
\begin{document}

\title[On a multiphase vectorial Bernoulli free boundary problem]{On a multiphase vectorial Bernoulli free boundary problem}
\author[G. Siclari and B. Velichkov]{Giovanni Siclari and Bozhidar Velichkov}

\address{Bozhidar Velichkov
\newline \indent Dipartimento di Matematica
\newline \indent Universita di Pisa 
\newline\indent Largo Bruno Pontecorvo, 5, 56127 Pisa, Italy}
\email{bozhidar.velichkov@unipi.it}

\address{Giovanni Siclari 
\newline \indent Centro di Ricerca Matematica Ennio De Giorgi
\newline \indent Scuola Normale Superiore di Pisa
\newline\indent Piazza dei Cavalieri 3, 56126 Pisa, Italy}
\email{giovanni.siclari@sns.it}

\begin{abstract}
We study the regularity of minimizers of  a multiphase vectorial Bernoulli free boundary problem. This problem consists in  a minimization problem for the Bernoulli functional  over  families of Sobolev functions with disjoint supports and non trivial grouping. We prove that minimizers exist, are locally Lipschitz continuous, and that their free boundaries do not contain points where three or more phases meet. Our main regularity result establishes that the free boundary is locally a $C^{1,\eta}$ graph near two-phase and branching points for some $\eta >0$.
\end{abstract}

\maketitle

{\bf Keywords.} Vectorial free boundary, partition problem.

\medskip 

{\bf MSC classification.}  35R35

\section{Introduction}\label{sec_intro}
Let  $D$ be  an open bounded set in $\R^d$. In this paper, we are interested in the regularity of minimizers of a multiphase vectorial Bernoulli free boundary problem in 
$D$. Roughly speaking, the terms "multiphase," "vectorial," and "Bernoulli" describe a partition problem for the Dirichlet energy involving a measure term and  non-trivial grouping, that is,  we consider  families of Sobolev functions possessing mutually disjoint supports across different families. However, the supports can have non empty intersection within a given family.

\subsection{Setting of the problem}\label{sec_setting} Let $k\in\N$ and $n\ge k$. We fix a partition of $\{1,\dots,k\}$ into $n$ disjoint sets $A_1,\dots,A_n$, precisely:
\begin{equation}
\bigcup_{j=1}^n A_j= \{1,\dots,k\}, \quad A_j \cap A_{j'} =\emptyset \text{ if }j \neq j' 
\quad \text{and} \quad  \max A_j= \min A_{j+1}-1.
\end{equation}
We set $k_j$ to be the number of elements of $A_j$, $k_j:=|A_j|$, and, for each $i\in\{1,\dots,k\}$, 
we denote by $j_i$ the unique $j\in\{1,\dots,n\}$ such that $i \in A_j$.

For any  $W \in H^1(D, \R^k)$, we define 
\begin{equation}
W_j:D \to \R^k, \quad (W_j)_i:=
\begin{cases}
w_i, &\text{ if } i \in A_j,\\
0, &\text{ if } i \not\in A_j,
\end{cases}
\end{equation}
which we identify with $W_j:D \to \R^{k_j}$ (ignoring the zero components), and 
\begin{equation}
\Omega_W:= \bigcup_{i=1}^k\Omega_{w_i}, \quad \Omega_{W_j}:=\bigcup_{i\in A_j}\Omega_{w_i} \quad \text{ with } \Omega_{w_i}:=\{x \in D: w_i \neq 0\}.
\end{equation}
Let us consider the functional
\begin{equation}\label{def_J}
J_{\Lambda}(W,D):=\int_D |\nabla W|^2\, dx + \sum_{j=1}^n \Lambda_j\left|\Omega_{W_j}\right|,
\end{equation}
with $\Lambda_j >0$ for $j=1, \cdots, n$, over 
\begin{equation}\label{def_H1_Sigma}
H^1(D,\Sigma):=\{W \in H^1(D,\R^k): w_i w_{i'}=0 \text{ if } i \in A_j \text{ and } i' \in A_{j'} \text{ with } j \neq j'\},
\end{equation}
that is, the elements of $H^1(D, \R^k)$ which take values in 
\begin{equation}\label{def_Sigma}
\Sigma:=\bigcup_{j=1}^n R_j, \quad  \text{ where } R_j:=\{x \in \R^k: x_i=0 \text{ if } i \not\in A_j\}.
\end{equation}
Given $g \in H^1(D, \Sigma)$ we are interested in the regularity of minimizers of the problem 
\begin{equation}\label{prob_min}
\inf\{J_{\Lambda}(W,D): W \in H^1(D, \Sigma),W-g \in H_0^1(D, \Sigma)\}
\end{equation}
and of their free boundaries.

\subsection{A short review of the literature}\label{sec_lit}
Problem \ref{prob_min} combines two well-studied classes of free boundary problems:  partition problems and  vectorial Bernoulli free boundary problems. Indeed, similar to a partition problem, we have $n$ disjoint phases but  our functional consist of the Dirichlet energy with a measure term. In recent years, both classes of free boundary problems have been subject of intense investigation by the mathematical community. 

On the one hand, concerning partition problems for the Dirichlet energy without measure terms, basic properties as Lipschitzianity can be establish with very different theories. We mention \cite{GW_harmonic,KS_harmonic} for harmonic maps and  \cite{CTV_nehari,CTV_asym,CTV_var} for limit of competition-diffusion models. Furthermore, there are strong connections with (spectral) shape optimization problems, see \cite{AMET_spect,CFL_sing,CF_opt,CTV_non_line,CTV_opt_part} and  the theory of  symmetric $2$-valued harmonic functions as pointed out for example in \cite{OV_survey}. Regularity of the free boundary had been  investigated in the literature by means of the Almgren monotonicity formula as in \cite{GW_harmonic,TT_reg_seg}, an approach made possible by the absence of measure terms, which allow to classify points in the free boundary according to their Almgren frequency. We refer for example to \cite{HHOT_dim2,OV_triple,ST_liov}. 
Finally, see  the detailed recent survey  \cite{OV_survey}  for further information on the subject.

On the other hand,  for the vectorial Bernoulli free boundary problem  basic properties as Lipschitzianity and non-degeneracy of  minimizers, interior density estimates and local finiteness of the perimeter of the free boundary 
have been investigated  in \cite{BMV_Lip,CSY_vectorial,KL_non_deg,KL_deg,MTV_epsilon,MTV_reg_spec, MTV_reg_vect,T_lip,T_Reg}.
Finer  results, as the regularity of the free boundary at one phase points, has been established with different techniques in several papers see \cite{CSY_vectorial,DT_impr,KL_non_deg,KL_deg,MTV_harn,MTV_reg_spec,MTV_reg_vect,SV_epi}. Singular cones has also been studied, see \cite{CJK_cones_stab,EE_quant_strat,JS_cones,W_dim_red} while the regularity at point of density one of the free boundary is still subject of current investigation, see \cite{MTV_reg_vect,SV_blow_ups} for a characterization of the blow-ups limits at such points. Finally, we mention the comprehensive survey \cite{TV_survey} for more details.

 Furthermore, several shape optimization results are related to the problem we are dealing with. We refer in particular to the  minimization of  functionals with  eigenvalues instead of the Dirichlet energy and a measure term. More precisely, let $\Omega$ be open and $\la_i(\Omega)$ the number $i$ Dirichlet eigenvalues of $\Omega$ for the Laplacian.  Regularity of minimizers of the functional $\la_1(\Omega)+\Lambda|\Omega|$  was investigated in \cite{BL_spec_meaus_1}, of $\la_2(\Omega)+\Lambda|\Omega|$  in \cite{KL_deg,MTV_spect_2} and of $\sum_{i=1}^n\la_i(\Omega)+\Lambda|\Omega|$ in \cite{MTV_reg_spec}, see in particular \cite[Lemma 2.4]{MTV_reg_spec}.
We also mention that as corollary of the analysis of two-phase and branching points for the two phase Bernoulli functional in  \cite{DPSV_branch}, a free boundary regularity result was obtained for the functional   $\sum_{i_=1}^k(\la_1(\Omega_i)+\Lambda_i|\Omega_i|)$, where $\Omega_i \cap \Omega_{U_j}=\emptyset$ for $i \neq j$. This shape optimization problem was introduced in \cite{BV_multi} and can be seen, in particular, as a scalar version of our problem for eigenvalues, that is, with trivial grouping, all the support are disjoint.

\subsection{Our main results}
Roughly speaking, in this paper, we show that any minimizer $U$  of \eqref{prob_min} is locally Lipschitz continuous in $D$ and that the free boundary $\partial\Omega_U$ is the union of two $C^1$ graphs in a neighborhood of two-phase and branching points whose definition we now recall. 

\begin{definition}\label{def_two_phase_branching}
We say that $x \in  D$ is a \textit{two-phase point} if $x \in \partial\Omega_{U_j}\cap \partial\Omega_{U_{j'}}$ for some $j \neq j'$ and there exists $r>0$ such that the $B_r(x) \cap \{|U|=0\}=\emptyset$. We say that  $x \in  D$ is a \textit{branching point} if $x \in \partial\Omega_{U_j}\cap \partial\Omega_{U_{j'}}$   for some $j \neq j'$ and   $B_r(x) \cap \{|U|=0\}\neq\emptyset$ for any $r>0$.
\end{definition}

The following theorem is our main result.
\begin{theorem}\label{theo_reg_diri}
For all $g \in H^1(D, \Sigma)$ there exists a minimizer of \eqref{prob_min}. Furthermore, any minimizer $U$   is locally Lipschitz continuous in $D$ and 
$\partial\Omega_{U_j}\cap \partial\Omega_{U_{j'}} \cap \partial\Omega_{U_{j''}}\cap D=\emptyset$ for all distinct  $j,j',j'' \in \{1,\dots,n\}$.
Finally, if $x \in   \partial \Omega_{U_j}\cap \partial \Omega_{U_{j'}} \cap D$ 
for some $j\neq j' \in \{1,\dots,n\}$, then there exists $r>0$ such that $B_r(x)\cap \partial \Omega_{U_j}, B_r(x)\cap \partial \Omega_{U_{j'}}$ are $C^{1,\eta}$ graphs for some $\eta>0$.
\end{theorem}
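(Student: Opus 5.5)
Existence follows from the direct method. For a minimizing sequence $W^m$, the bound $J_\Lambda(W^m,D)\le J_\Lambda(g,D)$ together with $W^m-g\in H^1_0$ gives a uniform $H^1$ bound. Up to a subsequence we have weak $H^1$ and strong $L^2$ convergence, and a.e. convergence, to some $U$. The constraint $w_iw_{i'}=0$ is preserved under a.e. convergence, so $U\in H^1(D,\Sigma)$. The Dirichlet energy is weakly lower semicontinuous. For the measure terms, a.e. convergence gives $\mathbf 1_{\Omega_{U_j}}\le\liminf_m \mathbf 1_{\Omega_{W^m_j}}$ pointwise, so Fatou yields $|\Omega_{U_j}|\le\liminf_m|\Omega_{W^m_j}|$.

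\textbf{Lipschitz continuity.} For each $j$, the map $|U_j|$ is subharmonic in $D$. Moreover $U_j$ is a local minimizer of the vectorial one-phase functional $\int|\nabla V|^2+\Lambda_j|\Omega_V|$ among competitors $V$ with $\Omega_V\subset\Omega_{U_j}\cup\{|U|=0\}$, since such competitors do not violate the disjointness constraint. I would then prove Lipschitz continuity through a harmonic-replacement argument in the spirit of the one-phase and two-phase Bernoulli literature (Alt--Caffarelli--Friedman). Fix $x_0$ and $B_r(x_0)$. Compare $U$ with $\tilde U$, obtained by replacing, on $B_r$, each component of the phase with largest $L^2$ mass on $\partial B_r$ by its harmonic extension, and setting the other phases to zero inside $B_{r}$ (cut off via a radial interpolation on an annulus). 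This competitor lies in $H^1(D,\Sigma)$. Comparing energies bounds $\int_{B_r}|\nabla (U-\text{harm.})|^2$ by $Cr^d$ plus the energy of the discarded phases. Iterating in the style of the ACF monotonicity formula, applied to the pair $(|U_j|,\sum_{j'\neq j}|U_{j'}|)$ of disjointly supported subharmonic functions, gives a uniform bound on $\fint_{B_r}|\nabla U|^2$, and hence the Lipschitz bound. Non-degeneracy of each $|U_j|$ and density estimates then follow as in the vectorial one-phase case.

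\textbf{No triple points.} I would argue by blow-up. At a point in $\partial\Omega_{U_j}\cap\partial\Omega_{U_{j'}}\cap\partial\Omega_{U_{j''}}$, Lipschitz continuity and non-degeneracy give a nontrivial Lipschitz blow-up limit in which three phases of positive density meet at $0$. The three-phase ACF monotonicity formula (Caffarelli--Jerison--Kenig type), applied to $|U_j|,|U_{j'}|,|U_{j''}|$, has a functional that decays like $r^{\varepsilon}$. This forces the gradients of at least one phase to vanish at $0$ at a rate contradicting non-degeneracy, which bounds $|U_{j}|$ from below by $cr$ on $B_r$.

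\textbf{Regularity at two-phase and branching points.} Near such a point only two phases $j,j'$ are present. Since the Lipschitz bound and non-degeneracy hold, I would reduce to the setting of \cite{DPSV_branch} through a viscosity formulation. Let $\xi$ be the unit direction of a blow-up of $U_j$ at $x_0$; by a flatness-improvement argument the direction stabilizes. Set $u=\xi\cdot U_j$ and $v=\xi'\cdot U_{j'}$, which are close to $|U_j|$ and $|U_{j'}|$ near $x_0$. Show that $(u,v)$ satisfy, in the viscosity sense, the two-phase Bernoulli conditions: $|\nabla u|^2\ge\Lambda_j$ and $|\nabla v|^2\ge\Lambda_{j'}$ on one-phase parts, and $|\nabla u|^2-|\nabla v|^2=\Lambda_j-\Lambda_{j'}$ with $|\nabla u|^2\ge\Lambda_j$ on the two-phase part. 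Derive these from domain variations and comparison with competitors. Then apply the boundary Harnack for the components (as in \cite{MTV_reg_vect}) together with the improvement of flatness of \cite{DPSV_branch} for the vector-valued perturbation. The key step is classifying blow-ups at branching points: two-plane solutions. This is where I expect the main obstacle, namely controlling the vectorial components $U_j/|U_j|$ via a boundary Harnack so that the problem genuinely reduces to the scalar two-phase theory, giving $C^{1,\eta}$ graphs.
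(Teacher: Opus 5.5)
\textbf{Lipschitz continuity.} Your existence argument is fine, and so is your triple-point argument once Lipschitz continuity is available. The genuine gap is the Lipschitz step, which is the technical core of the paper. Your competitor (harmonic extension $h_j$ of the dominant phase on $B_\rho$, other phases cut off on $B_r\setminus B_\rho$) only yields
\[
\int_{B_\rho}|\nabla(U_j-h_j)|^2\le \Lambda_j|B_\rho|+C\sum_{j'\neq j}\Big(\int_{B_r}|\nabla U_{j'}|^2+\frac{1}{(r-\rho)^2}\int_{B_r}|U_{j'}|^2\Big).
\]
The error coming from the discarded phases is only bounded by $Cr^dH_{j'}(x,4r)^2$. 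That is the same kind of quantity you are trying to control, and by your choice of dominant phase it is not smaller than the one for $U_j$. So there is no gain. This is unlike the scalar two-phase problem, where the harmonic extension of $u$ itself is admissible.

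Appealing to ACF for $(|U_j|,\sum_{j'\neq j}|U_{j'}|)$ does not close the argument either. It bounds only a product of weighted energies. To bound one factor you need a lower bound on the other, and a way to pass between energies and the means $H_j(x,r)$. Non-degeneracy is a statement about means. The trace/Poincar\'e inequality converting energy into means needs the zero set of $|U_j|$ to have positive density in $B_r(x)$, i.e.\ a density estimate, which is only available after Lipschitz continuity.

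The paper breaks this circularity with Lemma~\ref{lemma_iter},
\[
H_j(x,r/2)\le C\,\frac{|\Omega_{U_j}\cap B_r(x)|}{|B_r|}\,H_j(x,4r).
\]
It combines this with the trace inequality, with $|\{U_j=0\}\cap B_r(x)|\ge\sum_{j'\neq j}|\Omega_{U_{j'}}\cap B_r(x)|$, and with AM--GM to get the self-improving inequality
\[
\prod_{j\in B}H_j(x,r/2)^3\le Cr^{\varepsilon}\prod_{j\in B}H_j(x,4r).
\]
Its dyadic iteration bounds $\prod_{j\in B}H_j(x,r)$, and gives $Cr^{\varepsilon/2}$ when $|B|\ge 3$, which also excludes triple points directly. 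A separate argument is then still needed at points of $\partial\Omega_{U_j}$ at positive distance $r(x)$ from the other phases. Below $r(x)$ one uses the one-phase Lipschitz bound. Above $2r(x)$ one uses non-degeneracy at a nearest point of another phase, transported by Lemma~\ref{lemma_sub_changing_points}. None of this mechanism appears in your sketch.

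\textbf{Regularity.} You correctly locate the difficulty: classifying blow-ups at branching points and obtaining a boundary Harnack principle for the components. However, you give no mechanism for either. Stabilising the direction ``by a flatness-improvement argument'' is circular. The improvement of flatness of \cite{DPSV_branch} applies only after the reduction to scalar functions, which needs boundary Harnack, which in turn needs $\Omega_{U_j}$ to be NTA.

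The missing ideas are as follows.
\begin{enumerate}[(i)]
\item The Weiss formula for $W_{\Lambda_1}(U_1)+W_{\Lambda_2}(U_2)$ has density $\frac{\omega_d}{2}(\Lambda_1+\Lambda_2)$ at every two-phase point. This is because, by Faber--Krahn on $\mathbb{S}^{d-1}$, each phase of a homogeneous blow-up fills at least a hemisphere.
\item Monotonicity plus continuity in the center then force every blow-up along a sequence of two-phase points to be $C_1(y\cdot\nu)^+$, $C_2(y\cdot\nu)^-$.
\item This, together with the one-phase improvement of flatness of \cite{DT_impr} at nearby one-phase points, gives Reifenberg flatness of each $\Omega_{U_j}$. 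This step matters because singular one-phase points cannot be excluded a priori near a branching point.
\item Reifenberg flatness yields the NTA property, then boundary Harnack, then a constant-sign component.
\end{enumerate}

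Finally, your free-boundary conditions for $u=\xi\cdot U_j$ are mis-stated. On one-phase parts one has $|\nabla u|^2=g_j\Lambda_j$, not $|\nabla u|^2\ge\Lambda_j$. Here $g_j\in[c_0,1]$ is a H\"older weight coming from the boundary ratios of the components. Consequently \cite{DPSV_branch} must be used in a version with variable H\"older weights.
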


\begin{remark}
In  Theorem \ref{theo_reg_diri} we do not consider \textit{one phase points}, that is, $x \in D$
such that $x \in \partial \Omega_{U_j}$ for a unique $j \in \{1,\dots,n\}$. The reason why is that in a neighborhood  $B_r(x)$ of such points $U_j$ is a minimizer of the  vectorial Bernoulli free boundary functional, that is, the functional
\begin{equation}\label{J_vec_ben}
J_{\Lambda_j}(W,B_r(x)):=\int_{B_r(x)} |\nabla W|^2\, dx +  \Lambda_j\left|\Omega_{W}\right|.
\end{equation}
The regularity of  minimizers of \eqref{J_vec_ben} has been investigated in several papers, see Section \ref{sec_lit}.
\end{remark}

The proof of Theorem \ref{theo_reg_diri} can be summarized as follows. It is not hard to establish  existence with variational methods. Non-degeneracy  is easy to obtain as well since we only need to use inward variations on each phase and so we can proceed as in \cite{AC_one_phase,MTV_reg_spec}. On the other hand, proving  Lipschitzianity is far more challenging. Indeed, the presence of a measure term prevents us from using the strategy usually employed in partition problems briefly discussed in  Section \ref{sec_lit}. On the other hand the fact that we are dealing with more than one phase makes harder to use minimality  directly by restricting the class of possible comparators. 
Hence, letting $U$ be a minimizer  of \eqref{prob_min}, we use a delicate iterative argument based on the Alt-Caffarelli-Friedman's monotonicity formula to prove the boundedness in $r$ of $\frac{1}{r}\fint_{\partial B_r(x)} |U_j|\, d \mc{H}^{d-1}$ for $x \in \partial\Omega_U$ and any $j \in\{1,\dots,n\}$. Such a bound implies Lipschitzianity is a standard way. The same argument also allows us to exclude the presence of triple points.\medskip

Once, basic properties have been established, we turn to the regularity of the free boundary. The main idea is to prove that in a neighborhood of two-phase or branching points the domains $\Omega_{U_j},  \Omega_{U_{j'}}$ are $(\delta,R)$-Reifenberg flat, see Definition \ref{def_Reif_flat}. This property allows us to reduce the problem to a scalar one (in the spirit of \cite{MTV_reg_spec,MTV_reg_vect}) and then invoke the regularity results proved in \cite{DPSV_branch} thus yielding Theorem \ref{theo_reg_diri}. To prove that the free boundaries $\partial\Omega_{U_j}$ and $\partial \Omega_{U_{j'}}$ are  Reifenberg-flat in a neighborhood of a contact (two-phase or branching) point, we will use an argument based on the Weiss monotonicity formula, inspired by \cite{MTV_reg_spec}. We notice that, contrary to \cite{MTV_reg_spec}, in a neighborhood of a branching point we can have free boundary points of all Lebesgue densities between $1/2$ and $1$, which is due to the fact that we cannot a priori exclude the presence of one-phase singular cones approaching a branching point. Thus, we first show that the blow-ups with variable centers at two-phase or branching points are flat, and we use the improvement of flatness for minimizers of the vectorial Bernoulli free boundary problem (see \cite{DT_impr}) to transfer this information to the one-phase points near a branching point. 


\medskip
The paper is organized as follow. In Section \ref{sec_min_basic} we establish basic properties of minimizers as existence, non-degeneracy, Lipschitzianity, density estimates and absence of triple points.   In Section \ref{sec_blowup_Weiss} we study blows-up with variable center at two-phase points by the means of Weiss monotonicity formula. Finally, in Section  \ref{sec_reg}, we prove the Reifenberg flatness of the free boundary and conclude the proof of Theorem \ref{theo_reg_diri}.

\medskip

\textbf{Notation:} Throughout the paper, we are always going to denote minimizers of \eqref{prob_min} with $U$ and comparators with either $W$ or $V$.

\section{Existence of minimizers, non degeneracy and Lipschitzianity}\label{sec_min_basic}
In this section we establish basic properties of minimizers. We obtain the existence and the non-degeneracy of the minimizers in Proposition \ref{prop_existence_min} and Proposition \ref{prop_non_dege}. The key result of the section is Proposition \ref{prop_multi_phases_means}, which allows to prove both the Lipschitzianity of the minimizers (Proposition \ref{prop_lip}) and the absence of triple points (Corollary \ref{corol_no_triple}). In this proposition we prove, in particular, the boundedness of the function
$$r\mapsto \left(\frac{1}{r}\fint_{\partial B_r(x)} |U_j|\, d \mc{H}^{d-1}\right)\left(\frac{1}{r}\fint_{\partial B_r(x)} |U_{j'}|\, d \mc{H}^{d-1}\right),$$
by the Alt-Caffarelli-Friedman's monotonicity formula (Theorem \ref{theo_multi_mono}), the subharmonicity of $|U_j|$ (Proposition \ref{prop_existence_min}) and the non-degeneracy estimate (Proposition \ref{prop_non_dege}), combined with an iteration argument on dyadic scales.  

\subsection{Existence}
We begin with existence.

\begin{proposition}\label{prop_existence_min}
For any  $g \in H^1(D, \Sigma)$  there exists a minimizer $U \in  H^1(D, \Sigma)$ of \eqref{prob_min}. Furthermore
\begin{enumerate}[(i)]
    \item $u_i$  is harmonic on  $\Omega_{U_j}$  for any $i \in A_j$, and any $j=1,\dots,n$;
    \item $|U|$ and $|U_j|$  are subharmonic on $D$ in a distributional sense for any  $j= 1, \dots,n$.
\end{enumerate}
\end{proposition}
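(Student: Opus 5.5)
Existence follows from the direct method of the calculus of variations. The admissible class is nonempty, since it contains $g$, and $J_\Lambda\ge 0$. Take a minimizing sequence $W^m$ with $W^m-g\in H^1_0(D,\Sigma)$. Then $\|\nabla W^m\|_{L^2}$ is bounded, and by Poincaré applied to $W^m-g$ so is $\|W^m\|_{H^1}$. Up to a subsequence, $W^m\rightharpoonup U$ weakly in $H^1$, strongly in $L^2$, and a.e. in $D$. The constraint $w_iw_{i'}=0$ for $i\in A_j$, $i'\in A_{j'}$, $j\ne j'$, passes to a.e. limits, so $U\in H^1(D,\Sigma)$. The boundary condition is preserved because $H^1_0$ is weakly closed. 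The Dirichlet term is weakly lower semicontinuous. For the measure term, a.e. convergence gives $\mathbf 1_{\Omega_{U_j}}\le\liminf_m \mathbf 1_{\Omega_{W^m_j}}$ pointwise, since $|U_j(x)|>0$ forces $|W^m_j(x)|>0$ for $m$ large. Fatou's lemma then yields $|\Omega_{U_j}|\le\liminf_m|\Omega_{W^m_j}|$. Hence $U$ is a minimizer.

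For (i), I would first prove that $|U_j|$ is subharmonic. Harmonicity on $\Omega_{U_j}$ is delicate because $\Omega_{U_j}$ is not known to be open yet. Instead I would phrase (i) as
\[
\int \nabla u_i\cdot\nabla\varphi=0 \quad \text{for all } \varphi \text{ with } \{\varphi\ne0\}\subset\Omega_{U_j}
\]
in a suitable sense, and this becomes classical once continuity is known. The cleanest route is to prove (ii) first. It gives upper semicontinuity, and then continuity via the later Lipschitz estimate, and makes $\Omega_{U_j}$ open. On an open set $\Omega_{U_j}$ the variation $U+t\varphi e_i$ with $\varphi\in C_c^\infty(\Omega_{U_j})$ is admissible for small $t$. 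Indeed, on the support of $\varphi$ all components outside $A_j$ vanish, so the constraint holds. Moreover $\Omega_{(U+t\varphi e_i)_j}=\Omega_{U_j}$ there, so the measure term is unchanged. The first variation of the Dirichlet term then gives harmonicity of $u_i$.

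For (ii), I would use inward variations that never enlarge any phase. Fix $j$, a nonnegative $\varphi\in C_c^\infty(D)$, and small $t>0$. Define the comparator
\[
V_j:=\frac{(|U_j|-t\varphi)^+}{|U_j|}\,U_j ,
\]
set to $0$ where $U_j=0$, and keep all other $U_{j'}$ unchanged. Then $\Omega_{V_j}\subset\Omega_{U_j}$, the disjointness constraint is preserved, the boundary data is unchanged, and the measure terms do not increase. Minimality therefore gives $\int|\nabla U_j|^2\le\int|\nabla V_j|^2$. Write $U_j=|U_j|\theta$ on $\{U_j\neq0\}$, so that $|\nabla U_j|^2=|\nabla|U_j||^2+|U_j|^2|\nabla\theta|^2$. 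Then $V_j=(|U_j|-t\varphi)^+\theta$ satisfies
\[
|\nabla V_j|^2=|\nabla(|U_j|-t\varphi)^+|^2+((|U_j|-t\varphi)^+)^2|\nabla\theta|^2\le|\nabla(|U_j|-t\varphi)^+|^2+|U_j|^2|\nabla\theta|^2 .
\]
This reduces the inequality to the scalar statement $\int|\nabla |U_j||^2\le\int|\nabla(|U_j|-t\varphi)^+|^2$. Expanding, the right side is at most
\[
\int_{\{|U_j|>t\varphi\}}|\nabla|U_j||^2-2t\nabla|U_j|\cdot\nabla\varphi+t^2|\nabla\varphi|^2 .
\]
Dividing by $t$ and letting $t\to0$ gives $\int\nabla|U_j|\cdot\nabla\varphi\le 0$, which is $\Delta|U_j|\ge0$ in $\mathcal D'(D)$. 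The set $\{0<|U_j|\le t\varphi\}$ only helps, since dropping it decreases the right side's complement. The same argument applied to all components simultaneously, with $V=(|U|-t\varphi)^+U/|U|$, shows that $|U|$ is subharmonic. Since the $U_j$ have disjoint supports one has $|U|^2=\sum_j|U_j|^2$, and $V$ shrinks every phase, so the constraint and the measure term are again fine.

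The main obstacle is justifying the polar decomposition rigorously, since $\theta$ is not in $H^1$. I would handle it by approximation. Replace $|U_j|$ by $\sqrt{|U_j|^2+\varepsilon^2}$ and use the comparator $V_j=\eta_t(|U_j|)U_j$ with
\[
\eta_t(s)=\frac{(s-t\varphi)^+}{s}\in W^{1,\infty}_{\rm loc}(0,\infty),
\]
and compute $\nabla V_j$ directly by the chain rule on $\{|U_j|>t\varphi\}$, where $|U_j|$ is bounded below. The pointwise identity $|\nabla(fU_j)|^2=|\nabla(f|U_j|)|^2+f^2(|\nabla U_j|^2-|\nabla|U_j||^2)$ holds a.e. for Lipschitz $f$ and can be verified by direct computation. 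It avoids $\theta$ altogether, and the argument above then goes through verbatim.
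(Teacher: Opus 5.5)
Your existence argument coincides with the paper's. Your proof of (ii) is correct but takes a different route. The paper first proves (i). It then computes $\Delta|U|\ge 0$ pointwise on $\Omega_U$ from the harmonicity of the components, and extends subharmonicity across $\partial\Omega_U$ with the cut-off comparator $|U|-t\,p_\varepsilon(|U|)\varphi$, letting $\varepsilon\to0^+$. Your inward variation $V_j=(|U_j|-t\varphi)^+U_j/|U_j|$, tested directly against minimality, gives $\int_D\nabla|U_j|\cdot\nabla\varphi\le 0$ in one step. It needs neither (i) nor any openness of $\Omega_U$, which the paper's pointwise computation tacitly uses. The leftover term $\int_{\{|U_j|\le t\varphi\}}|\nabla|U_j||^2$ has the favourable sign, as you say. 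Membership of $V_j$ in $H^1$ is immediate because $z\mapsto(|z|-a)^+z/|z|$ is $1$-Lipschitz for $a\ge 0$.

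The gap is in (i). You get openness of $\Omega_{U_j}$ from continuity ``via the later Lipschitz estimate''. In the paper, however, that estimate (Proposition \ref{prop_lip}) is derived from mean-value bounds on $|U_j|$ at free boundary points precisely by invoking the harmonicity of $u_i$ on $\Omega_{U_{j_i}}$, that is, (i) itself. So the argument as written is circular. Also, upper semicontinuity of $|U_j|$ from (ii) does not by itself make $\{|U_j|>0\}$ open.

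The missing observation is that your variation needs no openness at all. Take any $\varphi\in H^1_0(D)$ vanishing a.e. outside $\Omega_{U_j}$. Then $U+t\varphi e_i$ is admissible for every $t\in\R$: the constraint is an a.e. condition, and the components outside $A_j$ vanish a.e. on $\Omega_{U_j}$. Moreover $\Omega_{(U+t\varphi e_i)_j}\subset\Omega_{U_j}$ up to a null set, so no measure term increases. Minimality then gives $\int_D\nabla u_i\cdot\nabla\varphi=0$ for all such $\varphi$, i.e. $u_i$ minimizes the Dirichlet energy in $u_i+H^1_0(\Omega_{U_j})$. This is exactly how the paper proves (i), by replacing $u_i$ with any $w$ such that $w-u_i\in H^1_0(\Omega_{U_j})$. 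It is the notion of harmonicity on which the later results, including the Lipschitz estimate, are built. Classical harmonicity on an open set can only be recovered once continuity is available.
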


\begin{proof}
Let $\{U_h\} \subset H^1(D, \Sigma)$ be a minimizing sequence. By minimality, 
\begin{equation}
\int_D |\nabla U_h|^2 \, dx \le J_{\Lambda}(U_h,D) \le  J_{\Lambda}(g,D).
\end{equation}
Hence, by the Poincaré inequality, for some positive constant $C$ depending only on $D$
\begin{equation}
\int_D |U_h-g|^2\, dx \le C \int_D |\nabla(U_h-g)|^2\, dx \le 2CJ_{\Lambda}(g,D).
\end{equation}
We conclude that $\{U_h\}$ is bounded $H^1(D, \R^k)$ thus, up to a subsequence, $U_h \rightharpoonup U$ weakly in 
$H^1(D, \R^k)$ and pointwise a.e in $D$ to some $U \in H^1(D, \R^k)$.

For a.e. $x \in D$ and any $h \in \mathbb{N}$, $ (u_h)_i (u_h)_{i'}=0 \text{ if } i \in A_j \text{ and } i' \in A_{j'} \text{ with } j \neq j'$ so that, passing to the limit as $h \to \infty$, we conclude  that $U \in H^1(D,\Sigma)$.
Furthermore, for a.e. $x \in D$ and any $i=1,\dots,k$
\begin{equation}
\chi_{\Omega_{u_i}}(x) \le \liminf_{k \to \infty}\chi_{\Omega_{(u_h)_i}}(x)  
\end{equation}
thus, by Fatou's Lemma,  integrating over $D$ we obtain for any $j=1,\dots,n$
\begin{equation}
\left|\Omega_{U_j}\right| \le \liminf_{h \to \infty} \left|\Omega_{(U_j)_h}\right|.
\end{equation}
Hence, by the lower semicontinuity of norms and the strong convergence in $L^2(D, \R^n)$,
\begin{equation}
J_{\Lambda}(U,D) \le \liminf_{h \to \infty}J_{\Lambda}(U_h,D),
\end{equation}
that is, $U$ is a minimizer of \eqref{prob_min}.

For any $i=1, \dots, k$ and any $w \in H^1(D)$ with $w-u_i \in H_0^1(\Omega_{U_j})$  and $i \in A_j$,   let us consider the comparator $w_i:=(u_1,\dots,u_{i-1}, w,u_{i+1}\dots, u_k)$.
Testing \eqref{prob_min} with $w_i$ we get
\begin{equation}
\int_D |\nabla u_i|^2 \, dx +\Lambda_j |\Omega_{U_j}| \le \int_D |\nabla w|^2 \, dx +\Lambda_j |\Omega_{W_j}|.
\end{equation}
Since $\Omega_{W_j} \subset \Omega_{U_j}$, we must have
\begin{equation}
\int_D |\nabla u_i|^2 \, dx \le \int_D |\nabla w|^2 \quad \text{ for any } w \in  H^1(D) \text{ with } w-u_i \in H_0^1(\Omega_{U_j}),
\end{equation}
that is, $u_i$ is harmonic on $\Omega_{U_j}$.
Furthermore,  on $\Omega_U$,
\begin{multline}
\Delta |U|= \dive(\nabla |U|)=\sum_{i=1}^k \Delta u_i \frac{u_i}{|U|}  +\sum_{i=1}^k\nabla \left(\frac{u_i}{|U|}\right) \cdot \nabla u_i\\
= \frac{1}{|U|^3} \left[|U|^2\sum_{i=1}^k|\nabla u_i|^2 -\sum_{i,i'=1}^ku_iu_{i'} \nabla u_i \nabla u_{i'} \right]\\
=\frac{1}{|U|^3} \left[\sum_{i'=1}^n\sum_{i=1}^k|u_{i'}|^2|\nabla u_i|^2 
-\sum_{i,i'=1}^ku_iu_{i'} \nabla u_i \nabla u_{i'} \right]\ge 0,
\end{multline}
since  for any  $i,i'=1, \dots k$
\begin{equation}
|u_{i'}|^2|\nabla u_i|^2+|u_i|^2|\nabla u_{i'}|^2-2u_iu_{i'} \nabla u_i \nabla u_{i'} \ge 0.
\end{equation}
Hence, $|U|$ is subharmonic  on $\Omega_U$ in a classical sense and the same argument also prove that $|U_j|$ is subharmonic  on $\Omega_{U_j}$ in a classical sense for any $j=1,\dots,n$.

Let $\varphi \in C^\infty_c(D)$, $\varphi \ge0$. Let us define
\begin{equation}
p_\e(x):=
\begin{cases}
0,  &\text{ if } x \in [0,\e/2],\\
\frac{1}{\e} (2x-\e),  &\text{ if } x \in [\e/2, \e],\\
1,  &\text{ if } x \in [\e, +\infty),\\
\end{cases}
\end{equation}
and $U_{\e,t}:= |U|-t p_\e(|U|)\varphi$  for any  $\e, t >0$. Then $\Omega_{U_{\e,t}} \subset \Omega_U $ and $U_{\e,t} \le |U|$. By subharmonicity of $|U|$ on $\Omega_U$,
\begin{equation}
\int_{D} |\nabla |U||^2 dx \le \int_{D} |\nabla U_{\e,t}|^2 dx.
\end{equation}
Furthermore,
\begin{equation}
|\nabla U_{\e,t}|^2= |\nabla |U||^2 -2t( |\nabla |U||^2 p_\e'(|U|) \varphi +p_\e(|U|)\nabla |U| \cdot \nabla \varphi) +o(t), \text{ as } t \to 0^+. 
\end{equation}
Hence,
\begin{equation}
\int_{D}p_\e(|U|)\nabla |U| \cdot \nabla \varphi \, dx \le -\int_{D} |\nabla |U||^2 p_\e'(|U|) \varphi dx \le 0,
\end{equation}
since $p_\e$ is increasing.
Passing to the limit as $\e  \to 0^+$, we conclude that $|U|$ is subharmonic in a distributional sense on $D$. The same argument also yields the distributional subharmonicity on $D$ of $U_j$ for any $j=1,\dots, n$.
\end{proof}

From the subharmonicity of $|U|$ for any $x \in D$, the maps
\begin{equation}
r \mapsto \fint_{B_r(x)} |U| \, dy \text{ and } r \mapsto \fint_{\partial B_r(x)} |U| \, d\mc{H}^{d-1} 
\end{equation}
are decreasing and so we may consider the representative of $|U|$ given by 
\begin{equation}
|U|(x):=\lim_{r \to 0^+}\fint_{B_r(x)} |U| \, dy=\lim_{r \to 0^+}\fint_{\partial B_r(x)} |U| \, d\mc{H}^{d-1}.
\end{equation}
In particular,  $U \in L^\infty_{loc}(D,\R^k)$.

\subsection{Lipschitzianity and non-degeneracy}\label{subsec_Lip}
We start with non-degeneracy which only needs testing the minimality of $U$ with comparators satisfying $\Omega_{W_j} \subset \Omega_{U_j}$ for any $j=1,\dots,n$.

\begin{proposition}{(Non-degeneracy)}\label{prop_non_dege}
There exists a dimensional constant $\kappa_d>0$ such that for any $j=1,\dots,n$,  $x \in \overline{\Omega}_{U_j}$ and any $r \in(0,d(x,\partial D))$,
\begin{equation}\label{ineq_non_dege}
\norm{|U_j|}_{L^\infty(B_r(x))} \ge \fint_{\partial B_r(x)} |U_j|\, d \mc{H}^{d-1}\ge \kappa_d \Lambda_j  r.
\end{equation}
Furthermore, 
\begin{equation}\label{ineq_nabla_U_j_trace}
\frac{1}{2}\int_{B_r(x)}|\nabla |U_j||^2 \,d x +\Lambda_j |\Omega_{U_j} \cap B_r(x)| \le C_d r^{-1}\norm{U_j}_{L^\infty(B_{2r}(x))}
\int_{\partial B_r} |U_j| \, d \mc{H}^{d-1},
\end{equation}
for some constant dimensional $C_d>0$, for any $x \in D$ and for any $r \in (0, d(x,\partial D)/2)$.
\end{proposition}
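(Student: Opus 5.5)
We follow the classical Alt--Caffarelli argument, testing minimality of $U$ only with comparators $W$ that satisfy $W_{j'}=U_{j'}$ for $j'\neq j$ and $\Omega_{W_j}\subset\Omega_{U_j}$. Such $W$ stay in $H^1(D,\Sigma)$, and only the $j$-th phase changes. We first prove the second estimate \eqref{ineq_nabla_U_j_trace}, and then obtain non-degeneracy \eqref{ineq_non_dege} from it by a contradiction argument.

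\textbf{Step 1 (energy estimate).} Fix $x$ and $r<d(x,\partial D)/2$, and set $M:=\norm{U_j}_{L^\infty(B_{2r}(x))}$. Let $\phi$ be the radial function that vanishes on $B_{r/2}(x)$ (or a smaller ball), equals $1$ outside $B_r(x)$, and is harmonic-like in the annulus; take $\phi$ to be the usual Alt--Caffarelli barrier built from the fundamental solution. Define the comparator
\begin{equation}
W_j:=\frac{U_j}{|U_j|}\bigl(|U_j|-M\phi\bigr)^+\ \text{ in } B_r(x),\qquad W_j:=U_j \text{ outside } B_r(x).
\end{equation}
In the annulus it is simpler to use the equivalent Euclidean cut-off $W_j:=\bigl(1-\frac{M\phi}{|U_j|}\bigr)^+U_j$; inside $B_{r/2}(x)$ this gives $W_j=0$. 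Then $\Omega_{W_j}\subset\Omega_{U_j}$, and $\Omega_{W_j}\cap B_{r/2}(x)=\emptyset$. Comparing $J_\Lambda(U,D)\le J_\Lambda(W,D)$ yields
\begin{equation}
\int_{B_{r/2}}|\nabla U_j|^2+\Lambda_j|\Omega_{U_j}\cap B_{r/2}|\le \int_{B_r\setminus B_{r/2}}\bigl(|\nabla W_j|^2-|\nabla U_j|^2\bigr).
\end{equation}
Then we repeat the Alt--Caffarelli computation verbatim with $|U_j|$ in place of $u$. This is legitimate because $|U_j|$ is subharmonic by Proposition \ref{prop_existence_min}, and because $|\nabla U_j|^2\ge|\nabla|U_j||^2$, with the angular part $|U_j|^2|\nabla(U_j/|U_j|)|^2$ only decreasing when $|U_j|$ is reduced. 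Integrating by parts against the barrier, we bound the right-hand side by $C_d r^{-1}M\int_{\partial B_r}|U_j|$. Replacing $r/2$ by $r$ (and $2r$ by the appropriate radius) gives \eqref{ineq_nabla_U_j_trace}.

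\textbf{Step 2 (non-degeneracy).} Suppose $\fint_{\partial B_r(x)}|U_j|\le\kappa\Lambda_j r$ with $\kappa$ small. By subharmonicity and the mean value property, $\norm{U_j}_{L^\infty(B_{r/2})}\le C_d\fint_{\partial B_r}|U_j|\le C\kappa\Lambda_j r$. Insert this into the Step 1 estimate on a slightly smaller ball, using the trace inequality $\int_{\partial B_\rho}|U_j|\le C(\rho^{-1}\int_{B_\rho}|U_j|+\int_{B_\rho}|\nabla|U_j||)$ and bounding $\int_{B_\rho}|U_j|\le M|\Omega_{U_j}\cap B_\rho|$. This gives
\begin{equation}
\int_{B_\rho}|\nabla|U_j||^2+\Lambda_j|\Omega_{U_j}\cap B_\rho|\le C\frac{M}{\rho}\Bigl(\frac{M}{\rho}+1\Bigr)\bigl(|\Omega_{U_j}\cap B_\rho|+\int_{B_\rho}|\nabla|U_j||^2\bigr),
\end{equation}
after Cauchy--Schwarz. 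If $M/\rho\le C\kappa\Lambda_j$ is small, the left-hand side absorbs the right-hand side, which forces $U_j\equiv0$ on $B_{\rho}(x)$. This contradicts $x\in\overline{\Omega}_{U_j}$. The first inequality in \eqref{ineq_non_dege} is trivial.

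\textbf{Main obstacle.} The main difficulty is Step 1 in the vectorial setting: checking that the cut-off comparator has Dirichlet energy controlled by that of the scalar cut-off of $|U_j|$. This follows from decomposing $|\nabla V|^2=|\nabla|V||^2+|V|^2|\nabla(V/|V|)|^2$ on $\Omega_{U_j}$. The other point to check is that the comparator does not break the disjointness constraint; it does not, since only phase $j$ is shrunk.

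The dependence of $\kappa_d$ on $\Lambda_j$ needs care: the absorption step yields $M\gtrsim \Lambda_j\rho$ only after rescaling. To make this precise I would rescale $U_j\mapsto U_j(x+r\cdot)/(\Lambda_j r)$ and adjust constants, possibly obtaining $\sqrt{\Lambda_j}$ rather than $\Lambda_j$. If that occurs, the constant would be stated accordingly.
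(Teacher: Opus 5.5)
\textbf{The comparator in Step 1 is wrong as written.} With your barrier ($\phi=0$ on $B_{r/2}(x)$, $\phi=1$ off $B_r(x)$) and $M\ge |U_j|$ on $B_r(x)$, the function $\frac{U_j}{|U_j|}(|U_j|-M\phi)^+=(1-M\phi/|U_j|)^+U_j$ equals $U_j$ on $B_{r/2}(x)$ and has zero trace from inside on $\partial B_r(x)$. So it does not vanish on the inner ball, contrary to your claim. It also jumps across $\partial B_r(x)$, where you glue it to $U_j$, so it is not in $H^1$. You have written the part that is cut off rather than the part that is kept. The vectorial Alt--Caffarelli comparator is
\begin{equation*}
W_j:=\min\{1,M\phi/|U_j|\}\,U_j=\frac{U_j}{|U_j|}\min\{|U_j|,M\phi\}\ \text{ in } B_r(x),\qquad W_j:=U_j\ \text{ outside } B_r(x).
\end{equation*}
It vanishes on $B_{r/2}(x)$, agrees with $U_j$ on $\partial B_r(x)$, and satisfies $\Omega_{W_j}\subset\Omega_{U_j}$. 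With this choice your polar-decomposition remark is exactly what is needed. Set $\theta=U_j/|U_j|$. On $\{|U_j|>M\phi\}$ one has $|\nabla W_j|^2=M^2|\nabla\phi|^2+M^2\phi^2|\nabla\theta|^2\le |\nabla \min\{|U_j|,M\phi\}|^2+|U_j|^2|\nabla\theta|^2$. Hence the energy increase is bounded by the scalar one for $u=|U_j|$. Integrating $-2M\int\nabla\phi\cdot\nabla(u-M\phi)^+$ by parts gives $C_d\frac{M}{r}\int_{\partial B_{r/2}(x)}|U_j|$. The boundary term sits on the inner sphere, where $\phi=0$, not on $\partial B_r$. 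Relabelling $r/2\to r$ then gives \eqref{ineq_nabla_U_j_trace}. Subharmonicity plays no role in this step.

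Once corrected, your route is the paper's: the Alt--Caffarelli barrier, with comparators that only shrink the $j$-th phase. The difference is in the truncation. The paper truncates componentwise, $\tilde u_i=\min\{u_i^+,\phi_r\}-\min\{u_i^-,\phi_r\}$. This avoids the polar decomposition but ends with $\sum_{i\in A_j}\int_{\partial B_r}|u_i|\le\sqrt{k_j}\int_{\partial B_r}|U_j|$. Your radial truncation gives the bound directly in $|U_j|$ with a purely dimensional constant. For \eqref{ineq_non_dege} the paper only refers to Alt--Caffarelli and Mazzoleni--Terracini--Velichkov; your Step 2 is that absorption argument.

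Your doubt about $\Lambda_j$ is justified: with a dimensional $\kappa_d$ the statement cannot hold as written. Since $J_{t^2\Lambda}(tW,D)=t^2J_\Lambda(W,D)$, the function $tU$ minimizes with weights $t^2\Lambda$ and datum $tg$, with the same sets $\Omega_{U_j}$. The inequality would then give $\fint_{\partial B_r(x)}|U_j|\ge \kappa_d t\Lambda_j r$ for every $t>0$, which is impossible. The scale-invariant form is $\kappa_d\sqrt{\Lambda_j}\,r$, and your Step 2 delivers it with one change. Your unweighted Cauchy--Schwarz needs $C\frac{M}{\rho}(\frac{M}{\rho}+1)<\min\{1,\Lambda_j\}$, which forces a $\Lambda_j$-dependent $\kappa$. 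Replace it by
\begin{equation*}
\int_{B_\rho}|\nabla|U_j||\le \frac{1}{2\sqrt{\Lambda_j}}\int_{B_\rho}|\nabla|U_j||^2+\frac{\sqrt{\Lambda_j}}{2}|\Omega_{U_j}\cap B_\rho|.
\end{equation*}
Absorption then holds as soon as $\norm{U_j}_{L^\infty(B_{2\rho}(x))}\le\varepsilon_d\sqrt{\Lambda_j}\,\rho$. Equivalently, normalize by $U_j/\sqrt{\Lambda_j}$, not by $\Lambda_j r$. The later arguments in the paper only use that the non-degeneracy constant is positive, so this correction is harmless there.
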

\begin{proof}
We note that \eqref{ineq_non_dege} can be proved as a simpler version of \cite[Lemma 2.6]{MTV_reg_spec}, which in turn is based on the analogous lemma from \cite{AC_one_phase}. Also  \eqref{ineq_nabla_U_j_trace} can be obtained following the proof of the same lemma, we repeat  some details for the sake of completeness.

Let us consider the radial function $\phi: B_2\setminus B_1$ solution to 
\begin{equation}
\begin{cases}
-\Delta \phi=0,    &\text{ in } B_2\setminus B_1,\\
\phi=0,  &\text{ on } \partial B_1,\\
\phi=1,  &\text{ on } \partial B_2.
\end{cases}
\end{equation}
Let $r \in (0, d(x,\partial D))/2$ and 
\begin{equation}
\phi_r:B_{2r}(x)\setminus B_r(x) \to \R, \quad \phi_r(x):=\norm{U_1}_{L^\infty(B_{2r}(x))}\phi(x/r). 
\end{equation}
Clearly on $\overline{B_{2r}(x)}\setminus B_r(x)$
\begin{equation}
|\nabla \phi_r(x)| \le C_d \norm{U_1}_{L^\infty(B_{2r}(x))} r^{-1},
\end{equation}
for some dimensional constant $C_d>0$.
Let $\tilde{U} \in H^1(D, \R^k)$ be for any $i \in A_1$
\begin{equation}
\tilde{u}_i=
\begin{cases}
\tilde{u}_i= \min\{u_i^+, \phi_r\}-\min\{u_i^-, \phi_r\}, &\text{ in } B_{2r}(x), \\
\tilde{u}_i=u_i, &\text{ in } D\setminus B_{2r}(x),
\end{cases}
\end{equation}
while $\tilde{u}_i=u_i$ for $ i \in A_j, j \neq 1$.
It is easy to see that  $\tilde{U} \in H^1(D, \Sigma)$. Hence, we may test the minimality of $U$ with $\tilde{U}$ thus obtaining
\begin{multline}
 \sum_{i  \in A_1} \int_{B_r(x)}|\nabla u_i|^2 \, dx + \sum_{j=1}^n \Lambda_j |\Omega_{U_j}\cap B_r(x)| 
\le \sum_{i  \in A_1} \int_{B_{2r}(x)\setminus B_r(x)}(|\nabla \tilde{u}_i|^2-|\nabla u_i|^2) \, dx\\
=\sum_{i  \in A_1} \int_{B_{2r}(x)\setminus B_r(x)}[-|\nabla \tilde{u}_i-\nabla u_i|^2+2\nabla \tilde{u}_i\cdot \nabla (\tilde{u}_i-u_i)] \, dx.
\end{multline}
Furthermore,
\begin{multline}
\sum_{i  \in A_1} \int_{B_{2r}(x)\setminus B_r(x)}\nabla \tilde{u}_i\cdot \nabla (\tilde{u}_i-u_i) \, dx\\
\sum_{i  \in A_1} \int_{B_{2r}(x)\setminus B_r(x)}\nabla \tilde{u}_i^+\cdot \nabla (\tilde{u}_i^+-u_i^+) \, dx
+\sum_{i  \in A_1} \int_{B_{2r}(x)\setminus B_r(x)}\nabla \tilde{u}_i^-\cdot \nabla (\tilde{u}_i^--u_i^-) \, dx\\
=-\sum_{i  \in A_1} \int_{B_{2r}(x)\setminus B_r(x)}\nabla\phi_r\cdot \nabla (u_i^+-\phi_r)^+ \, dx
-\sum_{i  \in A_1} \int_{B_{2r}(x)\setminus B_r(x)}\nabla\phi_r\cdot \nabla (u_i^--\phi_r)^- \, dx\\
\le C_d \norm{U_1}_{L^\infty(B_{2r}(x))} r^{-1} \sum_{i  \in A_1} \int_{\partial B_r(x)}  u_i \, dx.
\end{multline}
Since $|\nabla |U_1|| \le|\nabla U_1|$ in $D$, we have proved \eqref{ineq_nabla_U_j_trace} for $j=1$. We can argue in the same way to prove it for any other $j=2, \dots,n$.
\end{proof}

Now we turn to the Lipschitz continuity, which is far more delicate. Our argument is based on the multiphase Alt-Caffarelli-Friedmann monotonicity formula, see \cite{ACF_mono,CJK_mono,CTV_non_line,V_mono}, combined with an iteration scheme. 
\begin{theorem}\label{theo_multi_mono}
Let $m \ge 2$, $r_0>0$ and let $u_1,\dots,u_m \in H^1(B_{r_0}(x))$ be non-negative Sobolev functions such that in $B_{r_0}(x)$
\begin{equation}
-\Delta u_i \le 1  \text{ for any } i =1, \dots m, \quad \text{ and } \quad u_iu_j=0 \text{ for any } i,j =1, \dots m, i \neq j,
\end{equation}
where the inequality is meant in a distributional sense. Then there exists a  constant $C>0$, depending only  on $d$ and $r_0$,  and $\e\ge 0$ such that 
\begin{equation}\label{ineq_multi_mono}
\prod_{i=1}^m \frac{1}{r^{2+\e}}\int_{B_r(x)}\frac{|\nabla u_i|^2}{|x-y|^{d-2}}\, dy \le  C
\left(1+\sum_{i=1}^m \int_{B_{r_0}(x)}\frac{|\nabla u_i|^2}{|x-y|^{d-2}}\, dy \right)^m,
\end{equation}
with  $\e=0$ if $m=2$ while $\e>0$ if $m >2$.
\end{theorem}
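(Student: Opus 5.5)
The statement is the multiphase Alt--Caffarelli--Friedman monotonicity formula with right-hand side, so I would follow the Caffarelli--Jerison--Kenig scheme \cite{CJK_mono} together with its multiphase extension \cite{CTV_non_line,V_mono}; in the paper it is quoted from the literature, so this is a reconstruction of the standard argument rather than something new. Translating, we may assume $x=0$. Set
\begin{equation}
I_i(r):=\int_{B_r}\frac{|\nabla u_i|^2}{|y|^{d-2}}\,dy, \qquad \Phi(r):=\prod_{i=1}^m r^{-(2+\e)}I_i(r).
\end{equation}
The first step is the \emph{boundary estimate}. Since $u_i\ge 0$ and $-\Delta u_i\le 1$, we have $\Delta(u_i^2/2)\ge |\nabla u_i|^2-u_i$ in the sense of distributions. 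Testing this against the fundamental solution $|y|^{2-d}$ on $B_r$ gives
\begin{equation}
I_i(r)\le C\int_{\partial B_r}\Big(r^{2-d}\,u_i\,\partial_\nu u_i+ r^{1-d}u_i^2\Big)+C\int_{B_r}\frac{u_i}{|y|^{d-2}}\,dy .
\end{equation}
Let $\alpha_i(r)$ be the characteristic exponent of the open set $\{u_i>0\}\cap\partial B_r$ in $\mathbb S^{d-1}$, that is, the positive root of $\alpha(\alpha+d-2)=\lambda_1$. Optimizing the split between tangential and normal derivatives as in \cite{ACF_mono}, the boundary term is bounded by $\frac{r}{2\alpha_i(r)}\int_{\partial B_r}|\nabla u_i|^2 r^{2-d}=\frac{r}{2\alpha_i(r)}I_i'(r)$. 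Hence
\begin{equation}
\frac{I_i'(r)}{I_i(r)}\ \ge\ \frac{2\alpha_i(r)}{r}\Big(1-\frac{E_i(r)}{I_i(r)}\Big),
\end{equation}
where $E_i(r)$ collects the error coming from the forcing term.

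The second step is the \emph{spectral inequality} on the sphere. The traces of the $u_i$ on $\partial B_r$ have disjoint supports. For $m=2$, the Friedland--Hayman inequality gives $\alpha_1+\alpha_2\ge 2$. For $m\ge 3$, I would use the strict version: for any $m\ge3$ disjoint open subsets of $\mathbb S^{d-1}$, one has $\sum_i\alpha_i\ge m(1+\e/2)$ for some $\e=\e(d,m)>0$. This can be proved by compactness, because equality in Friedland--Hayman forces two complementary hemispheres, which leaves no room for a third nonempty phase. With this, in the homogeneous case we get $(\log\Phi)'\ge0$, and this is exactly why $\e=0$ for $m=2$ while $\e>0$ is available for $m>2$.

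The third step, which I expect to be the \emph{main obstacle}, is controlling the forcing error. Here the standard bootstrap of \cite{CJK_mono} is what I would try first. Using $u_i(y)\lesssim |y|\,(I_i\text{-type quantities})^{1/2}+|y|^2$, together with the mean value inequality for the subsolution $u_i+|y|^2/(2d)$, one shows that $E_i(r)/I_i(r)\lesssim r\,\big(r^{-2-\e}I_i(r)\big)^{-1/2}$. Then I split into two cases. If some $r^{-(2+\e)}I_i(r)$ is below a fixed constant, that factor is small. The remaining factors are bounded by $C\big(1+\sum_i I_i(r_0)\big)$, since $r^{-2-\e}I_i(r)\lesssim r_0^{-2-\e}I_i(r_0)+1$ from the same energy estimate. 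So the product bound holds directly. Otherwise, every error ratio is $O(r)$ on the dyadic scales under consideration, and $\log\Phi(r)\ge\log\Phi(s)-C\sum_k 2^{-k}r_0$ for $r<s$. A dyadic iteration from $r_0$ down to $r$, switching between the two cases, therefore yields $\Phi(r)\le C\,\Phi(r_0)+C\le C\big(1+\sum_i I_i(r_0)\big)^m$. The delicate point is making the error summable along dyadic scales without losing the exponent; this is also where the dependence of $C$ on $r_0$ enters.
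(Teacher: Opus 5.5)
The paper does not prove this statement. It quotes it from \cite{ACF_mono,CJK_mono,CTV_non_line,V_mono}, so your proposal has to stand on its own as a proof of the Caffarelli--Jerison--Kenig-type bound.

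Your first two steps are the standard ingredients and are essentially fine, with two remarks. First, the boundary inequality must be written with the exact constants: $1$ in front of $r^{2-d}u_i\partial_\nu u_i$ and $\frac{d-2}{2}$ in front of $r^{1-d}u_i^2$. With a generic $C$ the ACF optimization does not produce $\frac{r}{2\alpha_i}I_i'$. Second, the strict inequality for $m\ge 3$ needs no compactness. If $\sum_i\alpha_i\le m+\eta$, pairwise Friedland--Hayman forces every $\alpha_i+\alpha_j\le 2+(m-1)\eta$. Comparing three indices then gives every $\alpha_i\le 1+(m-1)\eta$. Faber--Krahn on $\mathbb S^{d-1}$ then produces three disjoint sets of measure close to half the sphere, which is impossible.

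The genuine gap is in your third step, which is the entire content of \cite{CJK_mono}. When some factor $r^{-2-\e}I_i(r)$ is small, you bound the remaining factors by $C(1+\sum_j I_j(r_0))$ via $r^{-2-\e}I_j(r)\lesssim r_0^{-2-\e}I_j(r_0)+1$. This is false: only the product is controlled, not the individual factors.

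Here is a counterexample. Let $u_1$ (resp. $u_2$) be the positive homogeneous harmonic function vanishing on the boundary of a thin circular cone $K$ (resp. of $\R^d\setminus\overline K$), extended by zero. Their degrees satisfy $\alpha_1>1>\alpha_2$. Hence $r^{-2}I_1(r)\sim r^{2\alpha_1-2}\to 0$, while $r^{-2}I_2(r)\sim r^{2\alpha_2-2}\to\infty$. The product $\sim r^{2(\alpha_1+\alpha_2-2)}$ stays bounded only because of the spectral excess $\alpha_1+\alpha_2>2$.

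For general $u_j$ you only have the trivial bound $I_j(r)\le I_j(r_0)$. It disposes of the regime where one factor is smaller than a suitable power of $r$. It does not handle the intermediate regime, where the error ratio of one phase is small but not summable over dyadic scales and the other factors blow up. A dichotomy on the sizes of individual factors cannot treat that regime. You need a quantitative mechanism by which the loss $\frac{2\alpha_i}{r}\frac{E_i}{I_i}$ of a phase with small energy is absorbed by the excess $2\sum_j\alpha_j-m(2+\e)$, which is large exactly when that phase has a small trace on the spheres. This is the non-trivial part of \cite{CJK_mono} and of its multiphase version \cite{V_mono}.

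Your error bound $E_i(r)/I_i(r)\lesssim r\,(r^{-2}I_i(r))^{-1/2}$ is also not justified at a single scale. The mean-value route controls $\sup_{B_r}u_i$ by data on $B_{2r}$, so you would need doubling of $I_i$. It also needs a Poincaré inequality, which requires $u_i$ to vanish on a fixed fraction of the relevant balls or spheres. Neither is available when the other phases are thin.

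Finally, for $r<s$ the almost-monotonicity you need is $\log\Phi(r)\le\log\Phi(s)+C\sum_k 2^{-k}r_0$, not the reversed inequality you wrote.
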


Let us define for any $j=1,\dots, n$, any $x \in D$ and any $r \in (0,d(x,\partial D))$
\begin{equation}\label{def_Hjxr}
H_j(x,r):=\frac{1}{r}\fint_{\partial B_r(x)} |U_j|\, d \mc{H}^{d-1}.
\end{equation}
We are going to show that $H_j(x,r)$ is bounded in $r$ for any $x \in \partial \Omega_{U_j}$. We will make use of the following lemmas.

\begin{lemma}\label{lemma_sub}
Let $u \in H^1(B_{2r}(x))$ be non-negative and subharmonic. Then for any $y \in B_r(x)$ 
\begin{equation}\label{ineq_u_sub_means}
u(y) \le 2 \fint_{\partial B_{2r}(x)} u \, d \mc{H}^{d-1}.
\end{equation}
\end{lemma}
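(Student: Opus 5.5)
The plan is to compare $u$ with its harmonic majorant on $B_{2r}(x)$ and then read off the constant from the Poisson kernel. Translate so that $x=0$ and set $R:=2r$. Let $h$ be the harmonic function in $B_R$ with $h=u$ on $\partial B_R$, in the trace sense. Since $u\ge 0$ is subharmonic and $h$ is harmonic, $u-h$ is subharmonic in $B_R$ with zero boundary trace. The weak maximum principle then gives $u\le h$ in $B_R$. This holds pointwise for the representative of $u$ fixed after Proposition \ref{prop_existence_min}, namely the limit of the spherical means, because those means are dominated by the spherical means of $h$. It therefore suffices to prove
\begin{equation}
h(y)\le 2\fint_{\partial B_R} u\, d\mc{H}^{d-1}\qquad\text{for all } |y|<r .
\end{equation}

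The second step is the Poisson formula, which writes $h(y)=\fint_{\partial B_R} P(y,z)\,u(z)\, d\mc{H}^{d-1}(z)$ with
\begin{equation}
P(y,z)=\frac{R^{d-2}(R^2-|y|^2)}{|y-z|^d}.
\end{equation}
Since $u\ge 0$, it is enough to bound $\sup_{z\in\partial B_R}P(y,z)$ for $|y|=\rho<R/2$ and to multiply by the mean of $u$. The supremum is attained when $z$ is aligned with $y$, so that $|y-z|=R-\rho$, which gives
\begin{equation}
P(y,z)\le \frac{R^{d-2}(R+\rho)}{(R-\rho)^{d-1}} .
\end{equation}
This bound is increasing in $\rho$, so I evaluate it at $\rho=R/2$.

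The main obstacle is the value of the constant. Evaluating at $\rho=R/2$ gives $3\cdot 2^{d-2}$, which equals $3$ already when $d=2$. The pointwise kernel bound therefore does not yield the claimed $2$ for every $y\in B_r(x)$. Moreover, the extremal configuration is $u=h$ with boundary data concentrating near the point of $\partial B_R$ closest to $y$. In that configuration the mean-value structure cannot do better than the kernel bound, so no better estimate of $h$ can give $2$ either. I would therefore check carefully whether the lemma is only used for $y$ in a smaller ball, for instance $|y-x|\le r/2$ or $y=x$. At $y=x$ the inequality holds with constant $1$, by the monotonicity of spherical means of subharmonic functions. With $\rho\le R/4$ the kernel bound gives $\tfrac54(\tfrac43)^{d-1}$, which is at most $2$ only for $d\le 2$.

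If the application in Proposition \ref{prop_multi_phases_means} only needs $u(y)\le C\fint_{\partial B_{2r}(x)}u$ with some fixed constant, the argument above closes with $C=3\cdot 2^{d-2}$, or with $2^d$ via the mean-value inequality $u(y)\le\fint_{B_r(y)}u\le 2^d\fint_{B_{2r}(x)}u\le 2^d\fint_{\partial B_{2r}(x)}u$. With the constant $2$ as written, the proof only goes through in the restricted regimes just described. I would flag this discrepancy to the authors rather than claim the inequality in the generality stated.
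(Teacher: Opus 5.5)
Your diagnosis is correct, and your route (harmonic majorant $h$, then a pointwise bound on the Poisson kernel) is the same as the paper's.

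\textbf{Where the paper's proof slips.} The error is in its last step. Read $\omega_d$ in the paper's kernel as $\mc{H}^{d-1}(\mb{S}^{d-1})$, which is what makes the displayed Poisson formula correct. The kernel bound $\frac{2}{\omega_d r^{d-1}}$ then gives $h(y)\le \frac{2}{\omega_d r^{d-1}}\int_{\partial B_{2r}(x)}u\,d\mc{H}^{d-1}$. Since $\mc{H}^{d-1}(\partial B_{2r}(x))=\omega_d(2r)^{d-1}$, this equals $2^d\fint_{\partial B_{2r}(x)}u\,d\mc{H}^{d-1}$, not $2\fint_{\partial B_{2r}(x)}u\,d\mc{H}^{d-1}$. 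So the paper's argument actually proves the constant $2^d$, the same as your mean-value argument. Your $3\cdot 2^{d-2}$ is the sharp constant.

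\textbf{The constant $2$ is false for $d\ge 2$.} To make your extremal example an honest element of $H^1(B_{2r}(x))$, take harmonic extensions of smooth non-negative boundary data with mean one, concentrating at the point of $\partial B_{2r}(x)$ nearest to $y$. Then in $d=2$ one gets $u(y)\to \frac{2r+|y-x|}{2r-|y-x|}$, which exceeds $2$ as soon as $|y-x|>2r/3$. For $d\ge 4$ there is an even simpler example: $u(w)=((w-x)\cdot e)^+$ with $y$ close to $x+re$ violates the bound, because $\fint_{\mb{S}^{d-1}}(\theta\cdot e)^+\,d\mc{H}^{d-1}<1/4$ there.

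\textbf{No damage downstream.} The lemma is used only in Lemma \ref{lemma_iter}, applied with $2r$ in place of $r$. It enters Proposition \ref{prop_multi_phases_means} only through that lemma, not directly as you suggested. Lemma \ref{lemma_iter} carries an unspecified dimensional constant $C_d$, so replacing $2$ by $2^d$ just turns the factor $4$ there into $2^{d+1}$. In particular, there is no need to restrict to $|y-x|\le r/2$. The right fix is simply to state the lemma with a dimensional constant, which your argument proves.
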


\begin{proof}
Let  $h$ in the unique solution to
\begin{equation}
\begin{cases}
-\Delta h=0, \text{ in } B_{2r}(x),\\
h=u, \text{ on } B_{2r}(x).
\end{cases}
\end{equation}
We can represent $h$  on $B_{2r}(x)$ as
\begin{equation}
h(y)=\frac{(2r)^2-|y|^2}{2\omega_d r} \int_{\partial B_{2r}(x)}\frac{u(\xi)}{|y-\xi|^d} \, d \mc{H}_\xi^{d-1}.
\end{equation}
For any $y \in B_r(x)$ and any $\xi \in \partial B_{2r}(x)$
\begin{equation}
\frac{(2r)^2-|y|^2}{\omega_d 2r|y-\xi|^d} \le \frac{2}{\omega_dr^{d-1}}.
\end{equation}
Hence, we conclude that for any $y \in B_r(x)$ 
\begin{equation}
h(y) \le 2 \fint_{\partial{B_{2r}(x)}}u \, d \mc{H}^{d-1}
\end{equation}
which yields \eqref{ineq_u_sub_means} by  subharmonicity.
\end{proof}

\begin{lemma}\label{lemma_sub_changing_points}
Let $u \in H^1(B_{R}(x))$ be non-negative and subharmonic. Let $y \in B_R(x)$ and let $r:=|x-y|$. Then 
\begin{equation}\label{ineq_sub_chang_point}
\fint_{\partial B_{\rho/4}(y)} u \, d \mc{H}^{d-1} \le C_d \fint_{\partial B_{\rho}(x)} u \, d \mc{H}^{d-1},
\end{equation}
for any $\rho \in [2r,R]$ and some dimensional constant $C_d>0$.
\end{lemma}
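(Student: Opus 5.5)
The plan is to compare spherical means at the two centres through solid averages. We pass from the sphere $\partial B_{\rho/4}(y)$ to a solid ball around $y$, include that ball in a larger ball around $x$, and then return to the sphere $\partial B_\rho(x)$. Two monotonicity facts for a non-negative subharmonic $u$ do the work. First, $s\mapsto\fint_{\partial B_s(z)}u\,d\mc{H}^{d-1}$ is nondecreasing. Second, the solid mean is controlled by the spherical mean: $\fint_{B_s(z)}u\,dy\le\fint_{\partial B_s(z)}u\,d\mc{H}^{d-1}$, because $\fint_{B_s(z)}u$ is an average of the spherical means over radii $t\le s$.

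Fix $\rho\in[2r,R]$ and note that $|x-y|=r\le\rho/2$. If $\rho$ equals $R$ exactly, so that $\partial B_R(x)$ is not inside the domain, we apply the argument for $\rho'<R$ and let $\rho'\to R$; alternatively we read the trace mean as a limit. Either way this only affects the endpoint.

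The steps are as follows.

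\textbf{Step 1.} By monotonicity of spherical means around $y$,
$$\fint_{\partial B_{\rho/4}(y)}u\,d\mc{H}^{d-1}\le \frac{1}{|B_{\rho/2}\setminus B_{\rho/4}|}\int_{B_{\rho/2}(y)\setminus B_{\rho/4}(y)}u\,dz.$$
This holds because every sphere $\partial B_t(y)$ with $t\in[\rho/4,\rho/2]$ has mean at least the mean at radius $\rho/4$, and the annulus average is a weighted average of these spherical means.

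\textbf{Step 2.} The annulus has volume comparable to $\rho^d$, and it is contained in $B_{\rho/2+r}(x)\subset B_{\rho}(x)$ since $r\le\rho/2$. Using $u\ge0$, the right-hand side of Step 1 is at most $C_d\fint_{B_\rho(x)}u\,dz$.

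\textbf{Step 3.} The solid mean is bounded by the spherical mean, $\fint_{B_\rho(x)}u\,dz\le\fint_{\partial B_\rho(x)}u\,d\mc{H}^{d-1}$. Combining the three steps gives \eqref{ineq_sub_chang_point}.

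For the Sobolev (rather than classical) subharmonic function, the monotonicity of spherical means is the standard fact already used after Proposition \ref{prop_existence_min}. It follows by mollification, or by working with the representative $u(z)=\lim_{s\to0}\fint_{\partial B_s(z)}u$, and it holds for a.e.\ radius up to the trace sense.

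The only delicate point is this regularity and boundary bookkeeping: the spheres $\partial B_{\rho/2}(y)$ and $\partial B_\rho(x)$ must lie in the domain of $u$. This is guaranteed by $\rho/2+r\le\rho\le R$, together with the limiting argument at $\rho=R$ described above. Everything else is elementary, and the constant depends only on the ratio of volumes of $B_\rho$ to the annulus $B_{\rho/2}\setminus B_{\rho/4}$, hence only on $d$. Lemma \ref{lemma_sub} is not needed.
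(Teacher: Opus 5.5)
Your proposal is correct and follows the paper's argument: monotonicity of spherical means bounds the mean on $\partial B_{\rho/4}(y)$ by an annulus average centred at $y$, the annulus lies inside $B_\rho(x)$, and the integral over $B_\rho(x)$ is bounded by $\omega_d\rho^d$ times the spherical mean on $\partial B_\rho(x)$. The only cosmetic difference is that the paper uses the annulus $B_{\rho-r}(y)\setminus B_{\rho/4}(y)$ and then the bound $\rho-r\ge\rho/2$, which gives exactly your constant $4^d/(2^d-1)$.
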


\begin{proof}
Since $B_{\rho-r}(y) \subset B_\rho(x)$ and $\rho-r \ge \rho/2$,  by subharmonicity
\begin{equation}
\int_{B_{\rho-r}(y)\setminus B_{\rho/4}(y)}u \, dz =d \omega_d\int_{\rho/4}^{\rho-r}t^{d-1}\fint_{\partial B_t(y)} u \, d \mc{H}^{d-1} 
\ge \omega_d[(\rho-r)^d-(\rho/4)^d]\fint_{\partial B_{\rho/4}(y)} u \, d \mc{H}^{d-1}
\end{equation}
while 
\begin{equation}
\int_{B_{\rho-r}(y)\setminus B_{\rho/4}(y)}u \, dz  \le \int_{B_{\rho}(x)}u \, dz \le \omega_d \rho^d\fint_{\partial B_\rho(x)} u \, d \mc{H}^{d-1}.
\end{equation}
Hence, we conclude that
\begin{equation}
\fint_{\partial B_{\rho/4}(y)} u \, d \mc{H}^{d-1} \le \frac{ \rho^d}{(\rho-r)^d-(\rho/4)^d}\fint_{\partial B_\rho(x)} u \, d \mc{H}^{d-1} \le 
 \frac{4^d}{2^d-1}\fint_{\partial B_\rho(x)} u \, d \mc{H}^{d-1},
\end{equation}
thus proving \eqref{ineq_sub_chang_point}.
\end{proof}

\begin{lemma}\label{lemma_iter}
Let $j=1,\dots, n$, $x \in \overline{\Omega}_{U_j}$ and $r  \in (0, d(x,\partial D)/4)$. Then
\begin{equation}\label{ineq_iter}
H_j(x,r/2)\le C_d\frac{|\Omega_{U_j} \cap B_r(x)|}{|B_r|}H_j(x,4r),
\end{equation}
for some dimensional constant $C_d>0$ and any $j=1, \dots, n$.
\end{lemma}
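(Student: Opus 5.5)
The plan is to combine three facts: the monotonicity of spherical means of the nonnegative subharmonic function $|U_j|$ (Proposition \ref{prop_existence_min}), the trivial bound of an integral by the measure of the support times the supremum, and Lemma \ref{lemma_sub} to control that supremum by a spherical mean at a larger scale. The hypothesis $x\in\overline{\Omega}_{U_j}$ is not needed for the inequality itself; it only ensures the statement is meaningful. We use throughout the pointwise representative of $|U_j|$ given by the limit of spherical means.

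\emph{Step 1 (from the sphere to an annulus).} By subharmonicity of $|U_j|$, the map $t\mapsto \fint_{\partial B_t(x)}|U_j|\,d\mc{H}^{d-1}$ is nondecreasing. Hence for every $t\in[r/2,r]$ we have $\fint_{\partial B_{r/2}(x)}|U_j| \le \fint_{\partial B_t(x)}|U_j|$. Integrating in polar coordinates with the weight $d\omega_d t^{d-1}$ over $t\in[r/2,r]$ gives
\begin{equation}
\fint_{\partial B_{r/2}(x)}|U_j|\,d\mc{H}^{d-1}\le \frac{1}{|B_r\setminus B_{r/2}|}\int_{B_r(x)\setminus B_{r/2}(x)}|U_j|\,dy \le \frac{C_d}{|B_r|}\int_{B_r(x)}|U_j|\,dy .
\end{equation}

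\emph{Step 2 (inserting the density).} Since $|U_j|$ vanishes outside $\Omega_{U_j}$,
\begin{equation}
\int_{B_r(x)}|U_j|\,dy\le |\Omega_{U_j}\cap B_r(x)|\,\norm{|U_j|}_{L^\infty(B_r(x))}.
\end{equation}

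\emph{Step 3 (bounding the supremum).} Apply Lemma \ref{lemma_sub} to $u=|U_j|$ with radius $2r$ in place of $r$. This is admissible because $4r<d(x,\partial D)$, so $|U_j|$ is a nonnegative subharmonic $H^1$ function on $B_{4r}(x)$. For every $y\in B_{2r}(x)\supset B_r(x)$ we get
\begin{equation}
|U_j|(y)\le 2\fint_{\partial B_{4r}(x)}|U_j|\,d\mc{H}^{d-1}=8r\,H_j(x,4r).
\end{equation}

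\emph{Step 4 (combining).} Chaining the three displays gives
\begin{equation}
\fint_{\partial B_{r/2}(x)}|U_j|\le C_d\,\frac{|\Omega_{U_j}\cap B_r(x)|}{|B_r|}\,8r\,H_j(x,4r).
\end{equation}
Dividing by $r/2$ yields \eqref{ineq_iter} with a new dimensional constant.

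There is no real obstacle here. The only points needing care are (a) checking that the scales fit inside $D$, which is exactly what the hypothesis $r<d(x,\partial D)/4$ guarantees, and (b) the justification in Step 1 that spherical means of a distributionally subharmonic function are monotone. The latter was already noted after Proposition \ref{prop_existence_min}.
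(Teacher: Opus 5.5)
Your proof is correct, and it reaches the density factor by a more elementary route than the paper. The paper first proves the sup-to-sup estimate
\[
\frac{\norm{U_j}_{L^\infty(B_{r/2}(x))}}{r/2}\le C_d\frac{|\Omega_{U_j}\cap B_r(x)|}{|B_r|}\frac{\norm{U_j}_{L^\infty(B_{2r}(x))}}{2r}.
\]
To do so it follows Proposition 3.12 of \cite{BV_multi} and uses the non-degeneracy and energy bounds \eqref{ineq_non_dege}--\eqref{ineq_nabla_U_j_trace}, which rely on minimality of $U$ and on $x\in\overline{\Omega}_{U_j}$. It then bounds the spherical mean on $\partial B_{r/2}(x)$ by the sup on $B_{r/2}(x)$, and the sup on $B_{2r}(x)$ by the mean on $\partial B_{4r}(x)$ via Lemma \ref{lemma_sub}.

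You get the density factor directly, by averaging the monotone spherical means over the annulus $B_r(x)\setminus B_{r/2}(x)$ and bounding $\int_{B_r(x)}|U_j|$ by $|\Omega_{U_j}\cap B_r(x)|$ times the sup. After that you only need the Lemma \ref{lemma_sub} step. Your route is self-contained and uses only the distributional subharmonicity of $|U_j|$: no external reference and no non-degeneracy. It also shows, as you note, that the hypothesis $x\in\overline{\Omega}_{U_j}$ is not needed.

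The paper's route formally gives a stronger sup bound at scale $r/2$, which the lemma does not need. That bound also follows from subharmonicity alone: apply the mean-value inequality on $B_{r/2}(y)\subset B_r(x)$ for $y\in B_{r/2}(x)$.

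One harmless correction to your Step 3. The Poisson-kernel estimate in the proof of Lemma \ref{lemma_sub} actually yields the factor $2^d$, not $2$. For $d\ge 2$ the constant $2$ can fail: approximating a Poisson kernel gives ratios up to $3\cdot 2^{d-2}$. So your $8r\,H_j(x,4r)$ should read $2^{d+2}r\,H_j(x,4r)$. Since the statement only asks for a dimensional constant $C_d$, this does not affect your conclusion.
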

\begin{proof}
Let $x \in \overline{\Omega}_{U_j}$. 
In view of \eqref{ineq_non_dege} and  \eqref{ineq_nabla_U_j_trace}, we can follow the proof of \cite[Proposition 3.12]{BV_multi} to show that
for any $r \in  (0, d(x,\partial D)/4)$
\begin{equation}
\frac{\norm{U_j}_{L^\infty(B_{r/2}(x))}}{r/2}
\le C_d\frac{|\Omega_{U_j} \cap B_r(x)|}{|B_r|}\frac{\norm{U_j}_{L^\infty(B_{2r}(x))}}{2r}.
\end{equation}
By Proposition \ref{prop_existence_min}, $|U_j|$ is subharmonic in $D$ so by Lemma \ref{lemma_sub} 
\begin{multline}
\frac{\fint_{\partial B_{r/2}(x)} |U_j|\, d \mc{H}^{d-1} }{r/2}\le \frac{\norm{U_j}_{L^\infty(B_{r/2}(x))}}{r/2}\\
\le C_d\frac{|\Omega_{U_j} \cap B_r(x)|}{|B_r|}\frac{\norm{U_j}_{L^\infty(B_{2r}(x))}}{2r} 
\le  4C_d\frac{|\Omega_{U_j} \cap B_r(x)|}{|B_r|}\frac{\fint_{\partial B_{4r}(x)} |U_j|\, d \mc{H}^{d-1} }{4r}
\end{multline}
thus we have proved \eqref{ineq_iter}.
\end{proof}

\begin{proposition}\label{prop_multi_phases_means}
There exists $\e\ge0$ such that for any $\delta>0$ there exists a constant $C>0$, depending on $d$, $\norm{g}_{H^1(D,\R^k)}$ and $\delta$,  such that for any $B \subset \{1, \dots,n\}$ with $|B| \ge 2$ and  any $x \in D$ with $d(x,\partial D)>\delta$ there holds
\begin{equation}\label{ineq_multi_phases_means}
\prod_{j \in B}H_j(x,r) \le C r^{\e/2} \quad \text{ for any } r \in (0, \delta].
\end{equation}
Furthermore $\e=0$ if $|B|=2$ while $\e>0$ if $|B|>2$.
\end{proposition}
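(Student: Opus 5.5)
We apply the multiphase ACF formula (Theorem \ref{theo_multi_mono}) to the functions $|U_j|$, $j\in B$. These are non-negative, have mutually disjoint supports, and are subharmonic by Proposition \ref{prop_existence_min}, so $-\Delta |U_j|\le 0\le 1$. The plan is to turn the gradient products in \eqref{ineq_multi_mono} into a bound on $\prod_{j\in B}H_j(x,r)$, using the iteration of Lemma \ref{lemma_iter} to control the scales where the ACF quantity is not directly comparable with $H_j$.

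\emph{Step 1: a uniform bound on the right-hand side of \eqref{ineq_multi_mono}.} Fix $x$ with $d(x,\partial D)>\delta$ and take $r_0=\delta/2$. We need a bound on $\int_{B_{r_0}(x)}|\nabla |U_j||^2|x-y|^{2-d}\,dy$ depending only on $d$, $\delta$ and $\norm{g}_{H^1}$. Since $\Delta|U_j|\ge 0$, the standard identity/inequality
\begin{equation}
\int_{B_\rho}\frac{|\nabla v|^2}{|y|^{d-2}}\le C\,\rho^{-d}\int_{B_{2\rho}} v^2+C\sup_{B_{2\rho}}v^2
\end{equation}
(test $\Delta v^2\ge 2|\nabla v|^2$ against a cut-off times the fundamental solution) reduces this to $L^\infty$ bounds. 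Those follow from the mean value property for the subharmonic function $|U|$ together with $\norm{U}_{L^2(D)}\le C(\norm{g}_{H^1})$, which comes from $J_\Lambda(U,D)\le J_\Lambda(g,D)$ and Poincaré. So the right-hand side is at most $C(d,\delta,\norm{g})$.

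\emph{Step 2: from gradient energies to $H_j$.} We need the lower bound
\begin{equation}
H_j(x,r)^2\lesssim \frac{1}{r^2}\int_{B_{r}(x)}\frac{|\nabla |U_j||^2}{|x-y|^{d-2}}\,dy
\end{equation}
at scales where $x$ lies in, or near, the support. Since each $|U_j|$ vanishes on a large set, a Poincaré-type argument gives this, but only when the zero set of $|U_j|$ occupies a fixed fraction of $B_r(x)$. For $|B|\ge 2$ the disjointness of supports guarantees that at every scale at least $|B|-1$ of the phases vanish on a proportion of $B_r$. The difficulty is the remaining phase: when $|\Omega_{U_j}\cap B_r|/|B_r|$ is close to $1$, Poincaré fails.

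\emph{Step 3 (main obstacle): iteration on dyadic scales.} For each $r=4^{-m}\delta$ we split into two cases.
\begin{itemize}
\item[(a)] Every $j\in B$ has $|\{|U_j|=0\}\cap B_r(x)|\ge\theta|B_r|$. Then Step 2 together with \eqref{ineq_multi_mono} gives $\prod_{j\in B}H_j(x,r)^2\le C r^{\e}$ directly.
\item[(b)] Some phase $j_0$ fills $B_r$ up to a fraction $\theta$. Then all the other phases have density at most $\theta$, and Lemma \ref{lemma_iter} gives $H_j(x,r/2)\le C_d\theta H_j(x,4r)$ for $j\ne j_0$. Meanwhile $H_{j_0}(x,r/2)\le 2^{d}\cdot 8\,H_{j_0}(x,4r)$ by monotonicity of spherical means.
\end{itemize}
Choosing $\theta$ small so that $C_d\theta\cdot 8\cdot 2^{d}\le 4^{-\e/2}$ (using that $|B|\ge2$), the product strictly decays across bad scales, with the decay beating the required $r^{\e/2}$ rate. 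We then induct downward in $m$: at a good scale the bound holds outright, and at a bad scale it is inherited from the scale $4r$. Scales between dyadic ones are handled by comparing spherical means at comparable radii, using Lemma \ref{lemma_sub} and subharmonicity, at the cost of a dimensional constant.

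A point we still have to verify is that Lemma \ref{lemma_iter} requires $x\in\overline{\Omega}_{U_j}$. If $x\notin\overline\Omega_{U_j}$ for some $j$, we replace $x$ by a nearest point $y\in\overline\Omega_{U_j}$ and transfer the means via Lemma \ref{lemma_sub_changing_points}. If instead $\mathrm{dist}(x,\Omega_{U_j})\ge r$, then $H_j(x,\rho)$ vanishes for $\rho<r$, which is harmless since the product is then trivially bounded at those scales.

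The main obstacle is making case (b) consistent with this point-shifting.
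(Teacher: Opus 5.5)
Your strategy is sound, and it closes the recursion differently from the paper. The ingredients are the same: ACF applied to the $|U_j|$, the trace inequality carrying the factor $|\{|U_j|=0\}\cap B_r(x)|$, disjointness of the phases, and Lemma \ref{lemma_iter}.

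The paper makes no case distinction. It bounds $|\{|U_j|=0\}\cap B_r(x)|$ from below by $\sum_{j'\neq j}|\Omega_{U_{j'}}\cap B_r(x)|$. It then uses Lemma \ref{lemma_iter} backwards, bounding each of these densities from below by $H_{j'}(x,r/2)/H_{j'}(x,4r)$. After AM--GM it obtains the superlinear recursion $\prod_{j\in B}H_j(x,r/2)^3\lesssim r^{\e}\prod_{j\in B}H_j(x,4r)$, i.e.\ $A_B(x,r)\le A_B(x,8r)^{1/3}$, which stays bounded under iteration.

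Your dichotomy uses Lemma \ref{lemma_iter} only forwards, on the minority phases of a bad ball. This gives a linear contraction whose factor you make small through $\theta$. It avoids AM--GM and makes the mechanism transparent: either the Poincaré inequality is available for every phase, or all but one phase decay geometrically. The cost is an extra parameter and a more careful induction; the paper's version gives one inequality valid at every scale. Your Step 1 also justifies boundedness of the weighted Dirichlet integrals in \eqref{ineq_multi_mono}, which the paper only asserts.

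Two remarks close the loose ends.

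\emph{First}, what you call the main obstacle is not one, and no point-shifting is needed. Inequality \eqref{ineq_iter} holds at every $x$ with $4r<d(x,\partial D)$, by subharmonicity alone. For $y\in\overline{B_{r/2}(x)}$ we have $B_{r/2}(y)\subset B_r(x)$ and $|U_j|=0$ off $\Omega_{U_j}$. So by the mean value inequality and Lemma \ref{lemma_sub},
\[
|U_j|(y)\le\fint_{B_{r/2}(y)}|U_j|\,dz\le 2^d\,\frac{|\Omega_{U_j}\cap B_r(x)|}{|B_r|}\,\sup_{B_r(x)}|U_j|\le 2^{d+1}\,\frac{|\Omega_{U_j}\cap B_r(x)|}{|B_r|}\fint_{\partial B_{2r}(x)}|U_j|\,d\mc{H}^{d-1}.
\]
Averaging over $\partial B_{r/2}(x)$ and using monotonicity of spherical means gives \eqref{ineq_iter} with $C_d=2^{d+4}$, without non-degeneracy. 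The paper in fact applies the lemma under the sole condition $H_j(x,4r)>0$.

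\emph{Second}, match the induction to the scale jump of the lemma. Set $P(\rho):=\prod_{j\in B}H_j(x,\rho)$ and decide good/bad in $B_{2\rho}(x)$. A good ball gives $P(\rho)\le 2^{|B|}P(2\rho)\le C_\theta\,\rho^{\e/2}$ outright. A bad one gives $P(\rho)\le 8(C_d\theta)^{|B|-1}P(8\rho)$. Choosing $8^{1+\e/2}C_d\theta\le 1$ then closes the induction on the blocks $[8^{-m-1}\delta,8^{-m}\delta)$ without accumulating constants.

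With your $4^{-m}$ grid and a factor tuned to $4^{-\e/2}$, the exponents do not quite match, because the lemma jumps from $4r$ to $r/2$, a factor $8$. Also, any dimensional loss from moving between radii must be taken outside the inductive chain, not at each step.
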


\begin{proof}
By Proposition \ref{prop_existence_min}, $|U_j|$ is subharmonic on $D$ for any $j=1, \dots n$ so that Theorem \ref{theo_multi_mono} holds for any $x \in D$ and 
any $r \in (0, d(x,\partial D))$ and any $B \subset \{1,\dots, n\}$ with at least two elements. Hence
\begin{multline}
\prod_{j \in B} \frac{1}{r^2}\int_{B_r(x)}\frac{|\nabla |U_j||^2}{|x-y|^{d-2}}\, dy \le  C
\left(1+\sum_{j \in B}\int_{B_{\delta}(x)}\frac{|\nabla |U_j||^2}{|x-y|^{d-2}}\, dy \right)^{|B|}r^\e \\
\le C\left(1+\sum_{j \in B}\int_{D}\frac{|\nabla |U_j||^2}{|x-y|^{d-2}}\, dy \right)^{|B|} r^\e\le Cr^\e,
\end{multline}
for some constant $C>0$ depending on $\delta, d$ and $\norm{g}_{H^1(D,\R^k)}$.
We recall  the following  trace inequality for any non-negative $u \in H^1(B_r(x))$ and any $r>0$,
\begin{equation}
\frac{1}{r^2}|\{u=0\}|\left(\fint_{\partial B_r(x)} u \, d \mc{H}^{d-1}\right)^2 \le C_d\int_{B_r(x)}|\nabla(u-h)|^2 dx,
\end{equation}
where $C_d$ is a positive dimensional constant and $h$ is the unique solution of 
\begin{equation}
\begin{cases}
-\Delta h=0, \text{ in } B_r(x),\\
h=u, \text{ on } B_r(x),
\end{cases}
\end{equation}
see for example in  \cite[Lemma 3.7]{V_book}. Since
\begin{equation}
\int_{B_r(x)}|\nabla(u-h)|^2 dx= \int_{B_r(x)}(|\nabla u|^2-|\nabla h|^2) dx \le \int_{B_r(x)}|\nabla u|^2 \, dx,
\end{equation}
we obtain, by \eqref{def_Hjxr},
\begin{multline}
\prod_{j \in B}\frac{1}{r^d}|\{|U_j|=0 \cap B_r(x)\}||H_j(x,r)|^2 \\
\le C_d\prod_{j \in B} \frac{1}{r^d}\int_{B_r(x)}|\nabla |U_j||^2\, dy 
\le  CC_d\prod_{j \in B} \frac{1}{r^2}\int_{B_r(x)}\frac{|\nabla |U_j||^2}{|x-y|^{d-2}}\, dy  \le  CC_d r^{\e}.
\end{multline}
Furthermore,
\begin{multline}
|\{U_j=0\} \cap B_r(x)|=|B_r|-|\Omega_{U_j}\cap B_r(x)|=|\{U=0\}\cap B_r(x)|+\sum_{j=1, j' \neq j}^m|\Omega_{U_{j'}} \cap B_r(x)|\\
 \ge \sum_{j=1, j' \neq j}^m|\Omega_{U_{j'}} \cap B_r(x)|.
\end{multline}
Let now consider only subsets $B\subset\{1,\dots,n\}$  such that $H_j(x,4r)>0$ for any $j \in B$ and $|B| \ge 2$. By Lemma \ref{lemma_iter}, for any such $B$ it follows that 
\begin{equation}
\prod_{j \in B}|H_j(x,r)|^2\sum_{j'\in B,j'\ne j} \frac{H_{j'}(x,r/2)}{H_{j'}(x,4r)} \le  Cr^\e,
\end{equation}
for some constant $C>0$ depending on $\delta, d$ and  $\norm{g}_{H^1(D,\R^k)}$. By subharmonicity of $|U_j|$, 
\begin{equation}
H_j(x,r) =\frac{1}{r}\fint_{\partial B_r(x)} |U_j| \, d \mc{H}^{d-1} \ge \frac{1}{2}\frac{1}{r/2}\fint_{\partial B_{r/2}(x)} |U_j| \, d \mc{H}^{d-1}
=\frac{1}{2}H_j(x,r/2),
\end{equation}
thus 
\begin{equation}
\prod_{j \in B}|H_j(x,r/2)|^2\sum_{j'\in B,j'\ne j} \frac{H_{j'}(x,r/2)}{H_{j'}(x,4r)} \le 4 C r^\e.
\end{equation}
Furthermore, by the arithmetic mean versus geometric mean inequality,
\begin{equation}
\sum_{j'\in B,j'\ne j} \frac{H_{j'}(x,r/2)}{H_{j'}(x,4r)} \ge (|B|-1)\left(\prod_{j'\in B,j'\ne j} \frac{H_{j'}(x,r/2)}{H_{j'}(x,4r)}\right)^{\frac{1}{|B|-1}}.
\end{equation}
Hence, for any $B \subset \{1,\dots,n\}$ with  $|B| \ge 2$
\begin{equation}
\prod_{j \in B}|H_j(x,r/2)|^3 \le 4 C r^\e \prod_{j \in B} H_{j}(x,4r).
\end{equation}
Indeed, we have just proved it if $H_{j}(x,4r)>0$ for any $j \in B$ while if $H_{j}(x,4r)=0$ for some $j \in B$ then also  $H_j(x,r/2)=0$ by \eqref{def_Hjxr} and subharmonicity which makes the above inequality trivial. Letting
\begin{equation}
A_B(x,r):=\frac{1}{2\sqrt{Cr^\e}}\prod_{j \in B}H_j(x,r),
\end{equation}
we have shown that for  any $x \in D$,  $r \in (0,d(x,\partial D)/8)$, and  $B \subset \{1,\dots,n\}$ with  $|B| \ge 2$
\begin{equation}\label{proof_prop_multi_phases_means_1}
A_B(x,r) \le |A_B(x,8r)|^{1/3}.
\end{equation}
Let $x \in D$ with $d(x,\partial D)> \delta$. For any $r \in  [\delta/8,\delta]$, by the trace inequality on $B_r(x)$
\begin{equation}
A_B(x,r) =\frac{1}{2\sqrt{C r^\e}}\prod_{j \in B} \frac{1}{d \omega_d r^d}\int_{\partial B_r(x)} |U_j| \, d \mc{H}^{d-1} \le C
\end{equation}
for some constant $C>0$ depending on $\delta, d$ and $\norm{g}_{H^1(D,\R^k)}$.

Let now $r \in (0, \delta/8)$. There exists $k \in \mathbb{N}$ such that $\delta/8 \le8^kr\le \delta$. Iterating \eqref{proof_prop_multi_phases_means_1} $k$ times we obtain
\begin{equation}
A_B(x,r) \le |A_B(x,8^k r)|^{3^{-k}} \le C^{3^{-k}} \le \max\{1,C\}^{3^{-k}}.
\end{equation}
More precisely, 
\begin{equation}
k=\lfloor \log_8(\delta/r) \rfloor \ge  \log_8(\delta/r)-1,
\end{equation}
where $=\lfloor \cdot \rfloor $ denote the integer part of a real number. Hence, since $r \in (0,\delta/8)$,
\begin{equation}
3^{-k} \le 3^{-\log_8(\delta/r)+1}=3 (r/\delta)^{\log_83} \le 3 (1/8)^{\log_83}.
\end{equation}
We conclude that 
\begin{equation}
A_B(x,r)  \le \max\{1,C\}^{3 (1/8)^{\log_83}},
\end{equation}
thus proving \eqref{ineq_multi_phases_means}.
\end{proof}

For any $\delta >0$ let 
\begin{equation}\label{def_Ddelta}
D_\delta:=\{x \in D: d(x,\partial D)> \delta\}.
\end{equation}

\begin{proposition}[Lipschitzianity]\label{prop_lip}
For any $i=1,\dots,k$, $u_i \in C^{0,1}_{loc}(D)$. Furthermore,
\begin{equation}
\norm{\nabla  u_i}_{L^\infty(D_\delta)} \le C,
\end{equation}
where $C>0$ depending on $\delta, d$, $\Lambda_j$ for $j=1, \dots,n$ and $\norm{g}_{H^1(D,\R^k)}$.
\end{proposition}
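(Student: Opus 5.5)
The plan is to reduce Lipschitz continuity to a uniform linear growth bound for each phase at free boundary points:
\begin{equation}
H_j(x,r)=\frac{1}{r}\fint_{\partial B_r(x)}|U_j|\,d\mc{H}^{d-1}\le C \quad\text{for } x\in\partial\Omega_{U_j}\cap D_{\delta/2},\ r\in(0,\delta/4],
\end{equation}
and then to conclude by the standard harmonic-function argument.

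\textbf{Step 1: growth bound at free boundary points.} Fix $x\in\partial\Omega_{U_j}\cap D_{\delta/2}$ and $r\le\delta/4$. There are two cases.

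\emph{Case 1: $x\in\partial\Omega_{U_{j'}}$ for some $j'\neq j$.} Since $x\in\overline{\Omega}_{U_{j'}}$, the non-degeneracy estimate \eqref{ineq_non_dege} gives $H_{j'}(x,r)\ge\kappa_d\Lambda_{j'}>0$. Applying Proposition \ref{prop_multi_phases_means} with $B=\{j,j'\}$ (so $\e=0$) yields
\begin{equation}
H_j(x,r)\le \frac{C}{\kappa_d\Lambda_{j'}}.
\end{equation}

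\emph{Case 2: $x$ is a one-phase point for $j$.} Only $U_j$ is nonzero near $x$, and $U_j$ locally minimizes the vectorial Bernoulli functional \eqref{J_vec_ben}. The cleanest route avoids a case distinction at the level of $x$ and instead obtains the bound directly from Lemma \ref{lemma_iter}. Since $|\Omega_{U_j}\cap B_r|\le|B_r|$, that lemma gives $H_j(x,r/2)\le C_d H_j(x,4r)$. This doubling estimate alone does not bound $H_j$; what is needed is a genuine Alt--Caffarelli-type estimate. I would follow the proof of \cite[Proposition 3.12]{BV_multi} and \cite{AC_one_phase}, testing minimality with the harmonic replacement of $U_j$ in $B_r(x)$. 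This perturbation is admissible in $H^1(D,\Sigma)$ only if it does not invade the other phases. That holds if $B_r(x)\cap\Omega_{U_{j'}}=\emptyset$ for all $j'\ne j$; in that situation the classical one-phase argument gives
\begin{equation}
\frac{1}{r}\fint_{\partial B_r(x)}|U_j|\le C\sqrt{\Lambda_j}.
\end{equation}
If instead $B_r(x)$ meets some $\Omega_{U_{j'}}$, I would pick a point $y\in\partial\Omega_{U_{j'}}$ with $|x-y|<r$. Then Lemma \ref{lemma_sub_changing_points} compares $H_j(x,\cdot)$ with $H_j(y,\cdot)$ at comparable scales, and Proposition \ref{prop_multi_phases_means} at the point $y$ (together with non-degeneracy of $U_{j'}$ at $y$) bounds $H_j(y,4r)$. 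Passing from there to $H_j(x,\cdot)$ at comparable scales uses monotonicity of spherical means of the subharmonic function $|U_j|$ together with Lemma \ref{lemma_sub}. One subtlety: Proposition \ref{prop_multi_phases_means} is stated for centres $y$ of both phases, but at $y\in\partial\Omega_{U_{j'}}$ we may have $y\notin\overline\Omega_{U_j}$. This is harmless, since the product bound holds for every centre and non-degeneracy is only needed for $U_{j'}$.

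\textbf{Step 2: from growth to gradient bound.} Take $y\in\Omega_{U_j}\cap D_\delta$ and let $\rho=d(y,\partial\Omega_{U_j})$. If $\rho\ge\delta/8$, interior estimates for harmonic functions together with the $L^\infty_{loc}$ bound from subharmonicity bound $|\nabla u_i(y)|$. Otherwise choose $x\in\partial\Omega_{U_j}$ with $|x-y|=\rho$. Each $u_i$ with $i\in A_j$ is harmonic in $B_\rho(y)$ by Proposition \ref{prop_existence_min}, so
\begin{equation}
|\nabla u_i(y)|\le \frac{d}{\rho}\,\|u_i\|_{L^\infty(B_\rho(y))}\le \frac{d}{\rho}\,\|U_j\|_{L^\infty(B_{2\rho}(x))}\le \frac{C}{\rho}\fint_{\partial B_{4\rho}(x)}|U_j|\,d\mc{H}^{d-1}\le C,
\end{equation}
where the third inequality is Lemma \ref{lemma_sub} and the last is Step 1. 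Since $u_i=0$ a.e. outside $\Omega_{U_j}$ and $u_i\in H^1$, this gives $\nabla u_i\in L^\infty(D_\delta)$, hence $u_i\in C^{0,1}_{loc}(D)$.

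\textbf{Main obstacle.} The delicate point is Case 2 when other phases are close to $x$ but do not touch it. There the harmonic-replacement comparator may not be admissible at scale $r$. The transfer to a nearby two-phase point $y$ via Lemma \ref{lemma_sub_changing_points} is what must be executed carefully, with scales chosen as $\rho\ge 2|x-y|$.
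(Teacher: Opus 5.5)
\textbf{Where your proposal matches the paper.} Case 1 is the paper's Case 1. Your transfer step mirrors the paper's. You apply Proposition \ref{prop_multi_phases_means} at a point $y\in\partial\Omega_{U_{j'}}$ with $|x-y|<r$, and use Lemma \ref{lemma_sub_changing_points} centred at $y$ to move the upper bound on $H_j$ from $y$ to $x$. The paper applies the product bound at $x$, and uses the same lemma centred at $x$ to move the non-degeneracy lower bound on $H_{j'}$ from $y$ to $x$. Both are legitimate, since the product bound holds at every centre of $D_\delta$. Step 2 is the standard reduction that the paper quotes from \cite[Lemma 3.5]{V_book}. The radii need harmless readjustment (e.g.\ $4r\le\delta/4$ forces $r\le\delta/16$).

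\textbf{The gap.} The problem is the other subcase of Case 2. You claim that whenever $B_r(x)$ misses every $\Omega_{U_{j'}}$ with $j'\neq j$, the one-phase argument gives $\frac1r\fint_{\partial B_r(x)}|U_j|\le C\sqrt{\Lambda_j}$ with a universal $C$. This is false here, because components may change sign. The harmonic-replacement argument only shows that, if this quantity is large, then $|\{U_j=0\}\cap B_r(x)|=0$, i.e.\ $U_j$ equals its harmonic replacement. For a nonnegative scalar this forces positivity in $B_r(x)$, but a harmonic vector can vanish at $x$.

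\emph{Counterexample.} Take $D=B_1$, $A_1=\{1,2\}$, $U_1(y)=M(y_1,y_2)$, $U_2\equiv0$, $g=U$, and $2M^2\ge\Lambda_1$. For any admissible $W$, $\nabla W_1=0$ a.e.\ on $\{W_1=0\}$ and $|\nabla U_1|^2=2M^2$, so
\[
J_\Lambda(W,B_1)-J_\Lambda(U,B_1)\ge\int_{B_1}|\nabla(W_1-U_1)|^2\,dy-\Lambda_1|\{W_1=0\}\cap B_1|\ge(2M^2-\Lambda_1)\,|\{W_1=0\}\cap B_1|\ge0 .
\]
Hence $U$ is a minimizer and $0\in\partial\Omega_{U_1}$. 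Yet $H_1(0,r)=M\fint_{\partial B_1}(y_1^2+y_2^2)^{1/2}\,d\mc{H}^{d-1}$ for every $r$, which is unbounded in $M$ with $\Lambda_1$ fixed. So at small scales only a data-dependent one-phase estimate is available, and your per-scale dichotomy cannot be closed as written.

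\emph{The fix (the paper's structure).} Let $r(x)$ be the largest radius with $B_{r(x)}(x)\cap\Omega_{U_{j'}}=\emptyset$ for all $j'\neq j$.
\begin{enumerate}[(i)]
\item Apply a local Lipschitz estimate for vectorial Bernoulli minimizers (e.g.\ \cite{BMPV_Lip}) in $B_{r(x)}(x)$. After rescaling it depends only on $d$, $\Lambda_j$ and $r(x)^{-1}\norm{U_j}_{L^\infty(B_{r(x)}(x))}$.
\item By Lemma \ref{lemma_sub}, $r(x)^{-1}\norm{U_j}_{L^\infty(B_{r(x)}(x))}\le 4H_j(x,2r(x))$.
\item Bound $H_j(x,2r(x))$ by your transfer argument at scale $2r(x)$, since $B_{2r(x)}(x)$ meets another phase. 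Use the trivial bound if $r(x)$ is comparable to $\delta$.
\item Cover the intermediate scales by monotonicity of spherical means.
\end{enumerate}
The delicate regime is therefore not the one you flagged (other phases close to $x$), but the one where they are far from $x$ relative to $r$.
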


\begin{proof}
Since $u_i$ is harmonic on $\Omega_{U_{j_i}}$ by Proposition \ref{prop_existence_min}, 
it is enough to show that for any $x \in \partial \Omega_{U_{j_i}} \cap D_\delta$ and any $i=1, \dots. k$
\begin{equation}
\fint_{\partial B_r(x)} |u_i| \, d \mc{H}^{d-1} \le  C \quad \text{ for any } r \in (0, \delta),
\end{equation}
see for example \cite[Lemma 3.5]{V_book}. 
Equivalently, we need to show that for any $j=1, \dots, n$ and $x \in \partial \Omega_{U_j} \cap D_\delta$
\begin{equation}\label{proof_prop_lip_1}
 H_j(x,r)  \le  C \quad \text{ for any } r \in (0, \delta).
\end{equation}
Let $j \in \{1, \dots, n\}$ and $x \in \partial \Omega_{U_j}$. We distinguish two cases.

\textbf{Case 1.} There exists $j' \neq j$ such that $x \in  \Omega_{U_{j'}}$. 

By Propositions \ref{prop_non_dege}-\ref{prop_multi_phases_means} with $B=\{j,j'\}$
\begin{equation}
C \ge  H_j(x,r)  H_{j'}(x,r) \ge  H_j(x,r)  \kappa_d \Lambda_{j'} 
\end{equation}
thus \eqref{proof_prop_lip_1} holds in this case.

\textbf{Case 2.} There exist $r>0$ such that  $B_r(x) \cap \Omega_{U_{j'}}=\emptyset$ for any $j '\neq j$.

Let $r(x)$ be the biggest $r>0$ such that $B_{r(x) }(x)\cap \Omega_{U_{j'}}=\emptyset$ for any $j '\neq j$. Then $U_j$ is a  minimizer the  vectorial Bernoulli free boundary functional in  $B_{r(x)}(x)$, that is, the functional
\begin{equation}
J_{\Lambda_j}(W,B_{r(x)}(x)):=\int_{B_{r(x)}(x)} |\nabla W|^2\, dx +  \Lambda_j\left|\Omega_{W}\right|,
\end{equation}
with $W \in H^1(B_{r(x)}(x),\R^{k_j})$, $W-U_j \in H_0^1(B_{r(x)}(x),\R^{k_j})$.  Then, for example by \cite{BMPV_Lip},
\begin{equation}
 H_j(x,r)  \le  C \quad \text{ for any } r \in (0, r(x)).
\end{equation}
By definition of $r(x)$ there exist $j' \in\{1,\dots, n\}$  and $y \in  \Omega_{U_{j'}}$ such that $|x-y|=r(x)$.
Furthermore, for any $r \in (2r(x),\delta)$ by Proposition \ref{prop_multi_phases_means}  with $B=\{j,j'\}$, Lemma \ref{lemma_sub_changing_points} and Proposition \ref{prop_non_dege}
\begin{equation}
C \ge  H_j(x,r)  H_{j'}(x,r) \ge C_d H_j(x,r)  H_{j'}(y,r/4) \ge  H_j(x,r)   C_d \kappa_d \Lambda_{j'} 
\end{equation}
so that 
\begin{equation}
 H_j(x,r)  \le  C \quad \text{ for any } r \in (2r(x),\delta)
\end{equation}
as well.
By subharmonicity for any $r \in [r(x), 2r(x)]$
\begin{equation}
 H_j(x,r) \le  2 H_j(x,2r(x))  \le 2C
\end{equation}
thus proving \eqref{proof_prop_lip_1}.
\end{proof}

A  simple and interesting consequence of Proposition \ref{prop_multi_phases_means} is the absence of triple points.
\begin{corollary}[Absence of triple points]\label{corol_no_triple}
For any distinct $j,j'j'' \in\{1, \dots, n\}$
\begin{equation}
\partial \Omega_{U_j} \cap \partial \Omega_{U_{j'}} \cap \partial \Omega_{U_{j''}}\cap D= \emptyset.
\end{equation}
\end{corollary}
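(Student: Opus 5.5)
Argue by contradiction, playing the decay of Proposition \ref{prop_multi_phases_means} for three phases against the non-degeneracy of Proposition \ref{prop_non_dege}. Suppose $x \in \partial \Omega_{U_j} \cap \partial \Omega_{U_{j'}} \cap \partial \Omega_{U_{j''}}\cap D$ for distinct $j,j',j''$, and fix $\delta>0$ with $x\in D_\delta$. Take $B=\{j,j',j''\}$. Since $|B|=3>2$, Proposition \ref{prop_multi_phases_means} gives some $\e>0$ and $C>0$ with
\begin{equation}
H_j(x,r)\,H_{j'}(x,r)\,H_{j''}(x,r)\le C r^{\e/2}\quad\text{for all } r\in(0,\delta].
\end{equation}

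For the lower bound, note that $x\in\partial\Omega_{U_m}\subset\overline{\Omega}_{U_m}$ for each $m\in B$. Inequality \eqref{ineq_non_dege} then applies directly at the center $x$ and gives
\begin{equation}
H_m(x,r)=\frac1r\fint_{\partial B_r(x)}|U_m|\,d\mc{H}^{d-1}\ge\kappa_d\Lambda_m
\end{equation}
for every $r\in(0,d(x,\partial D))$. No change of center via Lemma \ref{lemma_sub_changing_points} is needed here, because all three phases accumulate at $x$ itself.

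Multiplying the three lower bounds gives
\begin{equation}
\kappa_d^3\Lambda_j\Lambda_{j'}\Lambda_{j''}\le C r^{\e/2}\quad\text{for all } r\in(0,\delta].
\end{equation}
Letting $r\to0^+$ and using $\e>0$, the right-hand side tends to zero while the left-hand side is a fixed positive number. This contradiction proves the corollary.

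The only delicate point is that the argument needs $\e$ strictly positive when $|B|=3$. This is exactly the strict-decay case of Theorem \ref{theo_multi_mono} as carried through Proposition \ref{prop_multi_phases_means}, so it is taken as given. For $|B|=2$ one only has $\e=0$, and the same argument yields boundedness (which is what gives Lipschitz continuity) rather than a contradiction.
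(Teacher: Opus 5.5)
Your proof is correct and follows essentially the paper's own argument. You multiply the non-degeneracy lower bounds $H_m(x,r)\ge\kappa_d\Lambda_m$ at the common point $x$ against the decay $\prod_{m\in B}H_m(x,r)\le C r^{\e/2}$ with $\e>0$ from Proposition \ref{prop_multi_phases_means} for $|B|=3$, then let $r\to0^+$. Your restriction to $r\in(0,\delta]$ with $x\in D_\delta$ just tracks the admissible range of radii more carefully than the paper's one-line proof does.
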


\begin{proof}
Let $x \in \partial \Omega_{U_j} \cap \partial \Omega_{U_{j'}} \cap \partial \Omega_{U_{j''}}\cap D$. 
By Propositions \ref{prop_non_dege}-\ref{prop_multi_phases_means} with $B:=\{j,j'j''\}$,
\begin{equation}
\kappa_d^3 \Lambda_j  \Lambda_{j'} \Lambda_{j''}\le H_j(x,r) H_{j'}(x,r)H_{j''}(x,r) \le C r^{\e/2}
\end{equation}
for any $r \in (0,d(x,\partial D))$, a contradiction since $\e>0$.
\end{proof}

By non-degeneracy and Lipschitzianity we can obtain a lower density estimate in standard way, see for example \cite[Lemma 5.1]{V_book}.
\begin{corollary}[Lower density estimates]\label{corol_lower_dens}
Let $j \in\{1, \dots, n\}$ and let $x \in  \partial \Omega_{U_j}\cap D$. Then, there exists a constant $\kappa>0$, depending only on $d$, $\Lambda_{j'}$ for $j'=1, \dots,n$ and $\norm{g}_{H^1(D,\R^k)}$ such that 
\begin{equation}\label{ineq_lower_dens}
|B_r(x) \cap \partial \Omega_{U_j}| \ge \kappa |B_r|  \quad \text{ for any } r \in (0,d(x,\partial D)).
\end{equation}
\end{corollary}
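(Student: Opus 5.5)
The plan is the standard combination of non-degeneracy with Lipschitz continuity. The estimate should be read for the positivity set, that is $|B_r(x)\cap\Omega_{U_j}|\ge\kappa|B_r|$, since $\partial\Omega_{U_j}$ itself will be shown to be a graph near contact points.

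Fix $j$, $x\in\partial\Omega_{U_j}\cap D$ and $r$ in the admissible range. The first step is to apply Proposition \ref{prop_non_dege} at the point $x\in\overline{\Omega}_{U_j}$ with radius $r/2$. This gives $\norm{|U_j|}_{L^\infty(B_{r/2}(x))}\ge\kappa_d\Lambda_j r/2$. Since $|U_j|$ is continuous by Proposition \ref{prop_lip}, there is a point $y\in\overline{B_{r/2}(x)}$ with
\begin{equation}
|U_j|(y)\ge \tfrac{1}{2}\kappa_d\Lambda_j r .
\end{equation}

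The second step uses the gradient bound of Proposition \ref{prop_lip}. Since $|\nabla |U_j||\le|\nabla U_j|$, we have $\norm{\nabla|U_j|}_{L^\infty}\le L$ on the relevant region, where $L$ depends on $d$, the $\Lambda_{j'}$, $\norm{g}_{H^1(D,\R^k)}$ and the distance to $\partial D$. For $z\in B_\rho(y)$ we then get $|U_j|(z)\ge |U_j|(y)-L\rho>0$, provided
\begin{equation}
\rho:=\min\Bigl\{\frac{\kappa_d\Lambda_j}{4L},\frac12\Bigr\}\, r .
\end{equation}
Because $\rho\le r/2$, we have $B_\rho(y)\subset B_r(x)$. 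Hence $B_\rho(y)\subset \Omega_{U_j}\cap B_r(x)$, which gives
\begin{equation}
|\Omega_{U_j}\cap B_r(x)|\ge|B_\rho|=\Bigl(\min\Bigl\{\tfrac{\kappa_d\Lambda_j}{4L},\tfrac12\Bigr\}\Bigr)^d|B_r|.
\end{equation}
This is the claim with $\kappa:=\min\{\kappa_d\Lambda_j/(4L),1/2\}^d$.

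The only delicate point, and the step I expect to need the most care, is bookkeeping of the domain. The Lipschitz constant of Proposition \ref{prop_lip} is uniform only on $D_\delta$, so the argument should be run with $x\in D_{2\delta}$ (say) and $r<\delta$. This guarantees $B_r(x)\subset D_\delta$ and keeps the radius admissible in Proposition \ref{prop_non_dege}. In this way $\kappa$ depends additionally on $\delta$, which is implicitly allowed through the dependence of the Lipschitz constant. For larger radii, up to a fixed fraction of $d(x,\partial D)$, the estimate follows by monotonicity in $r$ at the price of a constant depending on $\delta$ and $\mathrm{diam}\,D$. Apart from this localization, no new ingredient beyond Propositions \ref{prop_non_dege} and \ref{prop_lip} is needed.
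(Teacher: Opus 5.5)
Your proposal is correct and is the standard argument the paper relies on. The paper gives no details beyond citing non-degeneracy plus Lipschitz continuity as in \cite[Lemma 5.1]{V_book}: Proposition \ref{prop_non_dege} gives a point $y\in\overline{B_{r/2}(x)}$ with $|U_j|(y)\gtrsim \Lambda_j r$, and the gradient bound of Proposition \ref{prop_lip} then gives a ball $B_{cr}(y)\subset\Omega_{U_j}\cap B_r(x)$. You are also right on two further points: the estimate must be read for $\Omega_{U_j}$ rather than $\partial\Omega_{U_j}$, as its use in Corollary \ref{corol_upper_esti} confirms, and $\kappa$ inherits from the Lipschitz constant a dependence on the distance to $\partial D$ that the statement suppresses.
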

At two-phase or branching points we can turn \eqref{ineq_lower_dens} into an upper density estimate for each of the phases $\Omega_{U_j}$.
\begin{corollary}[Upper density estimates at two-phase and branching points]\label{corol_upper_esti}
Let $j,j' \in\{1, \dots, n\}$ with $j \neq j'$ and let $x \in  \partial \Omega_{U_j}\cap \partial \Omega_{U_{j'}}\cap D$. Then, there exists a constant $\kappa>0$, depending only on $d$, $\Lambda_{j''}$ for $j''=1, \dots,n$ and $\norm{g}_{H^1(D,\R^k)}$ such that 
\begin{equation}\label{ineq_upper_dens}
|B_r(x) \cap \partial \Omega_{U_j}| \le (1-\kappa) |B_r|  \quad \text{ and  } \quad |B_r(x) \cap \partial \Omega_{U_{j'}}| \le (1-\kappa) |B_r|
\end{equation}
for any $r \in (0,d(x,\partial D))$.
\end{corollary}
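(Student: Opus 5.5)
The plan is to obtain the upper density bound for $\Omega_{U_j}$ from the lower density estimate for the \emph{other} phase $\Omega_{U_{j'}}$, using that the phases are disjoint. (Here I read the statement, as in Corollary \ref{corol_lower_dens}, as concerning the sets $\Omega_{U_j}$ rather than their boundaries: the boundaries have measure zero, so bounding $|B_r(x)\cap\partial\Omega_{U_j}|$ is trivial.) Since $\Omega_{U_j}\cap\Omega_{U_{j'}}=\emptyset$ for $j\neq j'$, we have
\begin{equation}
|B_r(x)\cap\Omega_{U_j}|\le |B_r|-|B_r(x)\cap\Omega_{U_{j'}}|.
\end{equation}
Now $x\in\partial\Omega_{U_{j'}}\cap D$, so Corollary \ref{corol_lower_dens} applied to the phase $j'$ gives $|B_r(x)\cap\Omega_{U_{j'}}|\ge\kappa|B_r|$ for every $r\in(0,d(x,\partial D))$. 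Substituting yields $|B_r(x)\cap\Omega_{U_j}|\le(1-\kappa)|B_r|$. Exchanging the roles of $j$ and $j'$ gives the second inequality. The constant $\kappa$ is the one from Corollary \ref{corol_lower_dens}, taken as the minimum over the finitely many phases, so it depends only on $d$, the $\Lambda_{j''}$ and $\norm{g}_{H^1(D,\R^k)}$.

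The only substantive ingredient is the lower density estimate itself, which I take as given. For completeness, it follows in the standard way. Choose $y\in\Omega_{U_{j'}}\cap B_{r/2}(x)$ arbitrarily close to $x$. Non-degeneracy, Proposition \ref{prop_non_dege}, provides a point $z\in B_{\rho}(y)$ with $|U_{j'}(z)|\ge\kappa_d\Lambda_{j'}\rho$, where $\rho=cr$. The Lipschitz bound of Proposition \ref{prop_lip} then shows that $|U_{j'}|>0$ on a ball around $z$ of radius comparable to $\rho$, and this ball lies inside $B_r(x)\cap\Omega_{U_{j'}}$.

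There is no genuine obstacle. The one point needing care is the range of radii: the Lipschitz constant is only uniform on $D_\delta$, so the constant $\kappa$ in Corollary \ref{corol_lower_dens} implicitly depends on the distance to $\partial D$. I would keep exactly the same dependence as in that corollary and not attempt to improve on it.
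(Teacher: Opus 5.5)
Your proposal is correct and follows the paper's proof: disjointness of the phases gives $|\Omega_{U_j}\cap B_r(x)|\le |B_r|-|\Omega_{U_{j'}}\cap B_r(x)|$, and the lower density estimate of Corollary \ref{corol_lower_dens}, applied to the phase $j'$ at $x\in\partial\Omega_{U_{j'}}$, finishes the argument. Your reading of the statement as concerning $\Omega_{U_j}$ rather than $\partial\Omega_{U_j}$ matches what the paper's proof actually bounds. Your caveat that $\kappa$ also depends on the distance to $\partial D$, through the Lipschitz constant, is a fair remark about the dependence the paper states.
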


\begin{proof}
For any for any $r \in (0,d(x,\partial D))$, by \eqref{ineq_lower_dens},
\begin{multline}
|\Omega_{U_j}\cap B_r(x)|=|B_r|-|\{U=0\}\cap B_r(x)|-\sum_{j=1, j'' \neq j}^m|\Omega_{U_{j''}} \cap B_r(x)|\\
 \le |B_r| - | \Omega_{U_{j'}} \cap B_r(x)| \le (1-\kappa)|B_r|,
\end{multline}
thus proving \eqref{ineq_upper_dens}.
\end{proof}

\section{Weiss Monotonicity Formula and Blow-ups}\label{sec_blowup_Weiss}
After establishing basic properties as existence,  non-degeneracy and Lipschitzianity in the previous section, now we turn our attention to the  regularity of the free boundary $\partial \Omega_U$ in $D$. 
To this end, given $x_0 \in \partial \Omega_U\cap D$, we may localize the problem in a neighbourhood $B_r(x_0)$ for some $r>0$. 
Furthermore, there exists $r_0>0$ such that $B_{r_0}(x_0) \cap  \Omega_{U_{j''}}=\emptyset$ for any $j'' \neq j,j'$ in view of Corollary \ref{corol_no_triple}.
Hence, we may suppose that $j=1$ and $j'=2$ for the sake of simplicity.  We also note that $(U_1,U_2)$ minimizes \eqref{def_J} in the domain $B_{r_0}(x_0)$ with boundary datum $(U_1,U_2)$. Therefore,  we can make use all the notations of Section \ref{sec_intro} for this particular case, that is, $n=2$ and $D=B_{r_0}(x_0)$. For example $U=(U_1,U_2)=(u_1,\dots,u_k)$ denotes a minimizer of \eqref{def_J} in $B_{r_0}(x_0)$ with $n=2$ for the rest of the paper.

We also denote $B_r(0)$ simply with $B_r$ for any $r>0$.

\subsection{Blow-up limits}\label{sub_blow_ups}
For any $x,y \in \R^d$  and $r>0$ let us define 
\begin{equation}
V_{r,x}(y):=\frac{1}{r}V(x+ry).
\end{equation}
Let
\begin{equation}
r_h \to 0^+, \quad   \{x_h\} \subset \partial \Omega_{U_1} \cup \partial \Omega_{U_2} \text{ with } x_h \to x \text{ and }x \in \partial \Omega_{U_1} \cup \partial \Omega_{U_2}
\end{equation}
as $h \to \infty$. 
 Since $U_j$ is Lipschitz there exists
\begin{equation}\label{eq_blow_up}
U_{j,0} \in C_{loc}^{0,1}(\R^d,\R^{k_j}) 
\quad \text{ such that }\quad(U_j)_{r_h,x_h} \to U_{j,0}
\end{equation}
uniformly in $B_R$ for any $R>0$ as $h \to \infty$ and up to subsequences for $j=1,2$. 

Note that, since we are considering a sequence of points $\{x_h\}$, it may well happen that $U_{j,0}\equiv0$ for some $j=1,2$ which is instead prevented by the non-degeneracy of $U_j$ if for example $x_h=x$ for any $h \in \mb{N}$ and $x \in \partial \Omega_{U_1} \cap \partial \Omega_{U_2}$. Furthermore, $U_{j,0}$ might not be unique and may depend on the sequence $x_h$ and $r_h$.

\begin{definition}\label{def_blow_ups}
A sequence $(U_j)_{r_h,x_h}$ is called \textit{blow-up sequence} and any limit $U_{j,0}$ of some blow-up sequence is called \textit{blow-up limit}. 
Furthermore, we let $\mc{BU}_{U_j}(x)$ be the set of all the blow up limits of $U_j$ at $x$, that is, limits  obtained as in \eqref{eq_blow_up} for some  sequence $x_h \to x$.
\end{definition}
We are considering blow up sequences with variable centers $\{x_h\}$ in $\partial \Omega_{U_1} \cup \partial \Omega_{U_2}$ instead of simply a fixed point $x$. This is crucial for our approach to the regularity of $\partial \Omega_{U_j}$ which is based on proving flatness in the sense of Reifenberg, see in particular the proof of Proposition \ref{prop_Reif_flat}.

We start by proving strong convergence in $H^1_{loc}(\R^d,\R^{k_j})$.
\begin{proposition}\label{prop_blow_up_limit_H1}
For $j=1,2$,  any blow-up sequence and any $R>0$
\begin{equation}\label{eq_strong_limit_H1}
{(U_j)}_{r_h,x_h}  \to U_{j,0} \quad \text{ strongly in } H^1(B_R,\R^{k_j}) \quad \text{ as } h \to \infty.
\end{equation}
Furthermore, $u_{i,0}$ is harmonic in $\Omega_{U_{j_i,0}}$ for any $i=1,\dots, k$.
\end{proposition}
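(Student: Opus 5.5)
The plan follows the standard scheme for Bernoulli-type problems: uniform Lipschitz bounds give weak convergence, and harmonicity plus Lipschitz control near the boundary give convergence of the Dirichlet energies. Set $U^h_j:=(U_j)_{r_h,x_h}$. By Proposition \ref{prop_lip}, $\|\nabla U^h_j\|_{L^\infty(B_R)}\le C$ uniformly in $h$ for $h$ large. Since $x_h$ lies on the free boundary, $|U^h_j|(0)$ is bounded, so $U^h_j$ is bounded in $H^1(B_R,\R^{k_j})$. Hence, up to a subsequence, $U^h_j\rightharpoonup U_{j,0}$ weakly in $H^1(B_R)$, and strongly by the uniform convergence in \eqref{eq_blow_up}. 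Also $\nabla U^h_j\to\nabla U_{j,0}$ weakly-$*$ in $L^\infty$. It therefore suffices to show $\limsup_h\int_{B_R}|\nabla U^h_j|^2\le\int_{B_R}|\nabla U_{j,0}|^2$.

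\textbf{Harmonicity of the limit.} Fix $i\in A_j$ and a ball $B\Subset\Omega_{U_{j,0}}$. There $|U_{j,0}|\ge c>0$, so by uniform convergence $|U^h_j|\ge c/2$ on $B$ for $h$ large. Thus $B\subset\Omega_{U^h_j}$, and $u^h_i$ is harmonic on $B$ by Proposition \ref{prop_existence_min}(i). Uniform limits of harmonic functions are harmonic, so $u_{i,0}$ is harmonic on $\Omega_{U_{j_i,0}}$.

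\textbf{Energy convergence.} I would test with a cutoff. Take $\varphi\in C^\infty_c(B_{2R})$ with $\varphi\ge0$ and $\varphi\equiv1$ on $B_R$. Each $u^h_i$ with $i\in A_j$ satisfies $\Delta u^h_i=0$ on $\{u^h_i\neq0\}$ and is Lipschitz. By the same truncation argument as in the proof of Proposition \ref{prop_existence_min} (using $p_\e$), we get $\Delta (u^h_i)^2=2|\nabla u^h_i|^2$ in the distributional sense on $B_{2R}$. The key point is that $\Delta (u^h_i)^2 = 2u^h_i\Delta u^h_i+2|\nabla u^h_i|^2$, and the singular part of $\Delta u^h_i$ is supported on $\{u^h_i=0\}$, where the weight $u^h_i$ vanishes. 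Concretely, I would verify $\int \nabla (u^h_i)^2\cdot\nabla\varphi=-2\int|\nabla u^h_i|^2\varphi$ by approximating $u^h_i$ with $(|u^h_i|-\e)^+\operatorname{sgn}u^h_i$, which has support compactly contained in the open set where $u^h_i$ is harmonic. This gives
\begin{equation}
2\int|\nabla u^h_i|^2\varphi\,dx=-\int 2u^h_i\nabla u^h_i\cdot\nabla\varphi\,dx .
\end{equation}
The right-hand side converges, since $u^h_i\to u_{i,0}$ uniformly and $\nabla u^h_i\rightharpoonup\nabla u_{i,0}$ weakly. The identical identity holds for $u_{i,0}$, since it is Lipschitz and harmonic where nonzero by the previous step. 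Hence $\int|\nabla u^h_i|^2\varphi\to\int|\nabla u_{i,0}|^2\varphi$.

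Combining this with weak lower semicontinuity, applied on $B_{2R}\setminus B_R$ with weight $\varphi$, yields convergence of the norms on $B_R$. Together with weak convergence this gives strong convergence in $H^1(B_R)$.

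The main obstacle is rigorously justifying $\Delta u^2=2|\nabla u|^2$ for a Lipschitz function harmonic on its nonzero set, with no hypothesis on the regularity of the free boundary. The $\e$-truncation handles this. The error terms are $\int_{\{0<|u|<\e\}}|\nabla u|\,|u|\,|\nabla\varphi|\le C\e$ and the corresponding gradient term on the set $\{0<|u|<\e\}$, which tends to zero by dominated convergence.
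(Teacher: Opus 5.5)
Your proof is correct. The harmonicity step matches the paper, but for the energy convergence you take a different route.

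The paper invokes \cite[Proposition 2.3]{BMPV_Lip} to write $\Delta (u_i)_{r_h,x_h}=\mu_h$ and $\Delta u_{i,0}=\mu$ as Radon measures concentrated on the free boundaries. It bounds $\mu_h(B_R)$ uniformly via the equi-Lipschitz estimate and tests with $[(u_i)_{r_h,x_h}-u_{i,0}]\varphi$. The measure terms then vanish because the integrand goes to zero uniformly, which gives $\int|\nabla((u_i)_{r_h,x_h}-u_{i,0})|^2\varphi\to 0$ in one step.

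You instead test the equation against the function itself (truncated). The singular part of $\Delta u$ is then annihilated exactly, since $u$ vanishes where that part lives. This yields $\int|\nabla u|^2\varphi=-\int u\,\nabla u\cdot\nabla\varphi$ for any continuous $H^1$ function harmonic on $\{u\neq0\}$, and the right-hand side passes to the limit under uniform convergence of $u$ plus weak convergence of $\nabla u$.

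What your route gains:
\begin{itemize}
\item It is self-contained: no external representation theorem and no uniform mass bound are needed.
\item It is insensitive to the sign of $u_i$. The paper treats $\mu_h$ as positive measures, which for sign-changing components is only justified after splitting $u_i=u_i^+-u_i^-$, each part being subharmonic.
\item Your identity gives directly $\int|\nabla(u^h_i-u_{i,0})|^2\varphi=\int|\nabla u^h_i|^2\varphi-2\int\varphi\,\nabla u^h_i\cdot\nabla u_{i,0}+\int|\nabla u_{i,0}|^2\varphi\to0$. So the lower-semicontinuity step on $B_{2R}\setminus B_R$, although correct, is unnecessary.
\end{itemize}

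One small imprecision in the truncation: the error in the drift term is $\int (u-u_\e)\nabla u\cdot\nabla\varphi$, and $|u-u_\e|\le\e$ holds on all of $\{u\neq0\}$, not only on $\{0<|u|<\e\}$. The $C\e$ bound is unaffected.
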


\begin{proof}
By Proposition \ref{prop_existence_min}, $u_i$ is harmonic over $\Omega_{U_j}$. Hence 
also $(u_i)_{r_h,x_h}$ is harmonic over $\Omega_{(U_j)_{r_h,x_h}}$. If $y \in \Omega_{U_{j,0}}$, then, since $u_{i,0}$ is continuous, there exists $r>0$ such that $u_{i,0}>0$ over $\overline{B}_r(y)$. For $h$ big enough, also  $(u_i)_{r_h,x_h}>0$ over $\overline{B}_r(y)$. Otherwise there would be a sequence of points $z_h \in \overline{B}_r(y)$ converging, up to subsequences, to some $z \in \overline{B}_r(y)$ such that $(u_i)_{r_h,x_h}(z_h)=0$ while 
$u_{i,0}(z)>0$, a contradiction with the uniform convergence of $(u_i)_{r_h,x_h}$ to $u_{i,0}$. In particular, $u_{i,0}$ is harmonic over 
$\Omega_{{U_{j,0}}}$.

We  now follows the strategy of \cite[Step 5 proof Theorem 3.1]{RTT_eigen}, see also \cite[Lemma 5.2]{MST_spectral_partition} and  \cite[Lemma 7.4]{MTV_spect_2}.
We already know that ${(U_j)}_{r_h,x_h} \rightharpoonup {U_{j,0}}$ weakly in $H^1(B_R,\R^{k_j})$ as $h \to \infty$  for any $R>0$.
Furthermore,  by \cite[Proposition 2.3]{BMPV_Lip}, for any $R>0$, $h$ large enough and $\varphi \in H^1(B_R)$
\begin{equation}\label{proof_prop_blow_up_1}
-\int_{B_R}\nabla (u_i)_{r_h,x_h} \cdot \nabla \varphi \, dy= \int_{B_R} \varphi \, d\mu_h \quad \text{ and } -\int_{B_R}\nabla u_{i,0} \cdot \nabla \varphi \, dy= \int_{B_R} \varphi \, d\mu 
\end{equation}
for some positive Radon measures $\mu_h, \mu$ concentrated on $\partial \Omega_{{(U_j)}_{r_h,x_h}} \cap B_R$ and $\partial \Omega_{U_{j,0}}\cap B_R$ respectively.
Since $(u_i)_{r_h,x_h} $ are equi-Lipschitz,   taking $\varphi \ge 0$ with $\varphi \equiv 1$ on $B_R$
\begin{equation}
\mu_h(B_R) \le -\int_{B_{2R}}\nabla (u_i)_{r_h,x_h} \cdot \nabla \varphi \, dy \le C,
\end{equation}
for some positive constant that does not depend on $h$. Hence,  again with $\varphi \ge 0$ and  $\varphi \equiv 1$ on $B_R$,
\begin{multline}
\int_{B_R}|\nabla ((u_i)_{r_h,x_h}-u_{i,0})|^2 \, dy \le \int_{B_{2R}}|\nabla ((u_i)_{r_h,x_h}-u_{i,0})|^2 \varphi \, dy\\
=-\int_{B_{2R}}\nabla[(u_i)_{r_h,x_h}-u_{i,0}] \cdot \nabla \varphi[(u_i)_{r_h,x_h}-u_{i,0}] \, dy\\
-\int_{B_{2R}}[(u_i)_{r_h,x_h}-u_{i,0}]  \varphi  \, d\mu_h + \int_{B_{2R}}[(u_i)_{r_h,x_h}-u_{i,0}]  \varphi  \, d\mu,
\end{multline}
by \eqref{proof_prop_blow_up_1} tested with $[(u_i)_{r_h,x_h}-u_{i,0}] \varphi$. Thanks to the uniform convergence of $(u_i)_{r_h,x_h}$ to $u_{i,0}$, we can pass to the limit as $h \to \infty$ thus proving \eqref{eq_strong_limit_H1}.
\end{proof}

In order to show the convergence of the not-zero  sets of a blow up sequence to the not-zero  set of the corresponding blow up limit in an $L^1$ sense we follow the approach of \cite[Lemma 7.4]{MTV_spect_2}. In order to adapt it to our setting,  we need a preliminary lemma.
\begin{lemma}\label{lemma_few_density_1}
Let  $U_{j,0} \in \mc{BU}_{U_j}(x)$ for $j=1,2$. Then $\mc{L}^d$-a.e. $y \in  \partial \Omega_{U_{j,0}}$ is a not a point of density $1$ for $\Omega_{U_{j,0}}$, that is,
\begin{equation}
\left|\left\{y \in \partial \Omega_{U_{j,0}}:\lim_{\rho\to0^+} \frac{|\Omega_{U_{j,0}} \cap B_\rho(y)|}{|B_\rho|}=1\right\}\right|=0.
\end{equation}
\end{lemma}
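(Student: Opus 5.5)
The plan is to show that $\partial\Omega_{U_{j,0}}$ carries no Lebesgue measure where the density of $\Omega_{U_{j,0}}$ is $1$. The idea is that on such a set $|U_{j,0}|$ vanishes while the distributional Laplacian of $|U_{j,0}|$ sees only a thin set.

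\textbf{Step 1: passing properties to the limit.} By Proposition \ref{prop_existence_min}, $|U_j|$ is subharmonic, and the estimates are scale invariant: the Lipschitz bound of Proposition \ref{prop_lip} and the non-degeneracy \eqref{ineq_non_dege}. Hence the limit $|U_{j,0}|$ is subharmonic, Lipschitz on $\R^d$, and each $u_{i,0}$ is harmonic in $\Omega_{U_{j,0}}$ by Proposition \ref{prop_blow_up_limit_H1}.

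For non-degeneracy of the limit, take $y\in\overline{\Omega}_{U_{j,0}}$. Then there are points $y_h\in\overline\Omega_{(U_j)_{r_h,x_h}}$ with $y_h\to y$. Otherwise $(U_j)_{r_h,x_h}\equiv0$ on a fixed ball around $y$ for infinitely many $h$, and uniform convergence would force $U_{j,0}\equiv0$ there. Applying \eqref{ineq_non_dege} at $y_h$ and passing to the limit uniformly gives
\[
\fint_{\partial B_\rho(y)}|U_{j,0}|\,d\mc{H}^{d-1}\ge\kappa_d\Lambda_j\rho \quad\text{for all }\rho>0 .
\]

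\textbf{Step 2: the Laplacian of $|U_{j,0}|$.} Set $\mu:=\Delta|U_{j,0}|$, a nonnegative Radon measure. By the computation in Proposition \ref{prop_existence_min}, inside $\Omega_{U_{j,0}}$ it has density
\[
\frac{|U_{j,0}|^2|\nabla U_{j,0}|^2-|\nabla U_{j,0}^TU_{j,0}|^2}{|U_{j,0}|^3}.
\]
Since $|U_{j,0}|=0$ on $\partial\Omega_{U_{j,0}}$ and is Lipschitz, $\nabla|U_{j,0}|=0$ a.e. on $\partial\Omega_{U_{j,0}}$. The absolutely continuous part of $\mu$ on $\{|U_{j,0}|=0\}$ vanishes there, because a.e. on this set $\Delta|U_{j,0}|=0$ by the $W^{2,1}$-type Stampacchia argument; more simply, $\mu\llcorner\{|U_{j,0}|=0\}$ is singular with respect to $\mc L^d$.

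\textbf{Step 3: density $1$ forces a contradiction.} Let $E$ be the set in the statement and suppose $|E|>0$. Take a Lebesgue density point $y\in E$ of $E$. Then $|U_{j,0}|$ vanishes on a set of density one at $y$, while Lipschitz continuity gives $|U_{j,0}|\le L\rho$ on $B_\rho(y)$. Blowing up again at $y$, the rescalings $|U_{j,0}|(y+\rho\cdot)/\rho$ converge to a Lipschitz function $\phi$ with the following properties:
\begin{itemize}
\item[(a)] $\phi$ is subharmonic, since this is preserved under uniform limits;
\item[(b)] $\phi$ vanishes a.e., because the zero set has density one at $y$, so its rescalings fill $B_1$ in measure;
\item[(c)] $\fint_{\partial B_1}\phi\ge\kappa_d\Lambda_j$, by the non-degeneracy of Step 1 passed to the limit.
\end{itemize}
A continuous function vanishing a.e. vanishes identically, which contradicts (c). Hence $|E|=0$.

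The same argument shows directly that no point of $\partial\Omega_{U_{j,0}}$ can be a Lebesgue density point of $\{U_{j,0}=0\}$. So Step 2 is not needed, and a.e. point of $\partial\Omega_{U_{j,0}}$ cannot have zero-set density $0$, i.e. $\Omega$-density $1$. The set where the $\Omega$-density is $1$ is contained in the set where the zero-set density is $0$. On $\partial\Omega_{U_{j,0}}\subset\{|U_{j,0}|=0\}$, a.e. point is a density-one point of the zero set by the Lebesgue density theorem, which is incompatible with $\Omega$-density $1$. This gives the claim even more directly.

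\textbf{Main obstacle.} Step 1 is the delicate point: ensuring that non-degeneracy survives with variable centers $x_h$. As indicated, this should follow by approximating $y$ with points of $\overline\Omega_{(U_j)_{r_h,x_h}}$ and using uniform convergence.
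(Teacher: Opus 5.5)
Your proof is correct, but it takes a genuinely different and much lighter route than the paper. Your closing observation is already a complete proof. $U_{j,0}$ is continuous, so $\Omega_{U_{j,0}}$ is open and $\partial\Omega_{U_{j,0}}\subset Z:=\R^d\setminus\Omega_{U_{j,0}}$. By the Lebesgue density theorem a.e. point of $Z$ has $Z$-density $1$, i.e. $\Omega_{U_{j,0}}$-density $0$, so the set in the statement is null. No property of minimizers enters.

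The paper argues differently. Using non-degeneracy of $U_2$ and $U_{1,0}U_{2,0}=0$, it shows that near a boundary point of $\Omega_{U_{1,0}}$ with density $1$ the second phase of the blow-up sequence eventually disappears. Hence $U_{1,0}$ is there a minimizer of the one-phase vectorial Bernoulli functional, being a limit of such minimizers. It then uses that such free boundaries are Lebesgue-null, together with a countable covering. That route imports compactness and density estimates from the one-phase theory, none of which the lemma as stated requires.

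Your Step 3 is also valid and proves more. There you pass non-degeneracy to the limit with variable centres, as in Step 1, and combine it with Lipschitz continuity. This shows that no point of $\partial\Omega_{U_{j,0}}$ is a density-one point of $Z$. Together with the Lebesgue density theorem this gives $|\partial\Omega_{U_{j,0}}|=0$. It is the same mechanism the paper uses in the density-$0$ case of Proposition \ref{prop_blow_up_limit_notzero_set_L1}.

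For the write-up, make three fixes:
\begin{enumerate}
\item Drop Step 2. It is unused, and the Stampacchia-type justification you give does not apply to a measure-valued $\Delta|U_{j,0}|$.
\item Note that once $y\in E$, the blow-up in Step 3 is redundant. $Z$-density $1$ and $\Omega_{U_{j,0}}$-density $1$ at the same point are already incompatible.
\item Rewrite the sentence claiming that a.e. boundary point cannot have zero-set density $0$. It does not follow from the sentence before it; it follows from the Lebesgue density theorem, which is what your final argument correctly uses.
\end{enumerate}
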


\begin{proof}
Let $y \in  \partial \Omega_{U_{1,0}}$ and suppose that the density of $\Omega_{U_{1,0}}$ at $y$ is $1$. 
We claim that there exists a radius $r(y)$ and  $h_0 \in \mb{N}$ such that 
\begin{equation}
B_{r(y)}(y) \cap \Omega_{(U_2)_{r_h,x_h}}= \emptyset
\end{equation}
for any $h \ge h_0$. We argue by contradiction supposing that there exists a sequence $y_n \to y$ such that $(U_2)_{r_h,x_h}(y_n)\neq0$. For any small $\rho>0$,
by Proposition \ref{prop_non_dege}, $\norm{(U_2)_{r_h,x_h}}_{L^\infty(B_\rho(y_n))} \ge \eta \rho$ for some $\eta>0$ that does not depend on $h$.
Passing to the limit as $n \to \infty$ and $h \to \infty$ we obtain $\norm{U_{2,0}}_{L^\infty(B_{2\rho}(y))} \ge \eta \rho$. Let $\tilde{y}$ be such that
$|U_{2,0}|(\tilde{y})=\norm{U_{2,0}}_{L^\infty(B_{2\rho}(y))}$.
Hence, from the Lipschitzianity of $|U_{2,0}|$,  we obtain
\begin{equation}
|U_{2,0}|(z) \ge  L |z-\tilde{y}|-|U_{2,0}|(\tilde{y}) \ge \eta \rho - L|z-\tilde{y}|
\end{equation}
so that $|U_{2,0}| \neq 0$ in $B_{\eta\rho/L}(\tilde{y})\subset B_{3 \rho}(y)$. Since $U_{1,0}U_{2,0}\equiv0$, then we have reached a contradiction with the fact that $y$ has density $1$ for  $\Omega_{U_{2,0}}$.

In $B_{r(y)}(y)$, we then have that $(U_1)_{r_h,x_h}$  is a  minimizer of the  vectorial Bernoulli free boundary functional for any $h\ge h_0$. Hence, so is  $U_{1,0}$, see \cite{MTV_reg_vect}. Let us define the open set
\begin{multline}
A:=\{z \in \partial \Omega_{U_{1,0}}:U_{1,0}  \text{ minimizes the  vectorial Bernoulli free boundary }\\
\text{ functional in $B_r(z)$ for some } r>0  \}.
\end{multline}
Let $Q$ be a countable dense subset of $A$. Then, 
\begin{equation}
A=\bigcup_{q \in Q}B_{r(q)}(q), \quad \text{ where } r(q):=\sup\{r>0: B_{2r(q)}(q)\subset A\}. 
\end{equation}
Hence, 
\begin{equation}
\left|\left\{y \in \partial \Omega_{U_{1,0}}:\lim_{\rho\to0^+} \frac{|\Omega_{U_{1,0}} \cap B_\rho(y)|}{|B_\rho|}=1\right\}\right| 
\le \sum_{q \in Q}|\partial \Omega_{U_{1,0}} \cap B_{r(q)} |=0,
\end{equation}
which conclude the proof for $j=1$. The case $j=2$ is of course identical.
\end{proof}

\begin{proposition}\label{prop_blow_up_limit_notzero_set_L1}
Let $\{x_h\} \subset \partial \Omega_{U_1} \cap \partial \Omega_{U_2}$. For $j=1,2$ and any $R>0$
\begin{equation}\label{eq_blow_up_limit_notzero_set_L1}
\chi_{\Omega_{{(U_j)}_{r_h,x_h}}}  \to \chi_{\Omega_{U_{j,0}}} \quad \text{ strongly in } L^1(B_R) \quad \text{ as } h \to \infty.
\end{equation}
In particular, 
\begin{align}
 &\overline{\Omega}_{{(U_j)}_{r_h,x_h}} \to \overline\Omega_{U_{j,0}}, \quad \text{ and } \quad \R^d \setminus\Omega_{{(U_j)}_{r_h,x_h}} \to  \R^d \setminus\Omega_{U_{j,0}}, \label{eq_hausdorf_full_sets}\\
 &\partial \Omega_{{(U_j)}_{r_h,x_h}} \to \partial \Omega_{U_{j,0}},   \label{eq_hausdorf_boundary}
\end{align}
locally in Hausdorff sense as $h \to \infty$.
\end{proposition}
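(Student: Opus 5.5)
The plan follows the strategy of \cite[Lemma 7.4]{MTV_spect_2}. The $L^1$ convergence \eqref{eq_blow_up_limit_notzero_set_L1} will come from two pointwise facts, combined through dominated convergence on $B_R$: a ``$\liminf$'' inequality on $\Omega_{U_{j,0}}$ and a ``$\limsup$'' inequality off its closure. Lemma \ref{lemma_few_density_1}, together with the density estimates, will dispose of the boundary $\partial\Omega_{U_{j,0}}$.

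\textbf{Step 1 (inside the limit set).} Let $y\in\Omega_{U_{j,0}}$. By uniform convergence of $(U_j)_{r_h,x_h}$ to the continuous limit $U_{j,0}$, we get $|(U_j)_{r_h,x_h}|>0$ near $y$ for $h$ large, exactly as in the proof of Proposition \ref{prop_blow_up_limit_H1}. Hence $\chi_{\Omega_{(U_j)_{r_h,x_h}}}(y)\to 1$.

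\textbf{Step 2 (outside the closure).} Let $y\notin\overline{\Omega}_{U_{j,0}}$, so that $U_{j,0}\equiv0$ on some $B_{2\rho}(y)$. Suppose $(U_j)_{r_h,x_h}$ had points of $\overline{\Omega}_{(U_j)_{r_h,x_h}}$ in $B_\rho(y)$ along a subsequence. The non-degeneracy estimate \eqref{ineq_non_dege} is scale invariant under $V\mapsto V_{r,x}$, so it gives $\|(U_j)_{r_h,x_h}\|_{L^\infty(B_{\rho}(z_h))}\ge\kappa_d\Lambda_j\rho$ at such points $z_h$. Passing to the limit yields $\|U_{j,0}\|_{L^\infty(B_{2\rho}(y))}>0$, a contradiction. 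Thus $\chi_{\Omega_{(U_j)_{r_h,x_h}}}\to0$ on $B_\rho(y)$.

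\textbf{Step 3 (the boundary).} It remains to show $|\partial\Omega_{U_{j,0}}\cap B_R|=0$. By Lemma \ref{lemma_few_density_1}, a.e.\ point of $\partial\Omega_{U_{j,0}}$ fails to have density $1$ for $\Omega_{U_{j,0}}$. By Lebesgue's density theorem, a.e.\ point of $\partial\Omega_{U_{j,0}}$ then has density $0$ for $\Omega_{U_{j,0}}$ or density $1$ for $\R^d\setminus\overline\Omega_{U_{j,0}}$ up to null sets. So it suffices to exclude a positive-measure set of boundary points of density $0$ for $\Omega_{U_{j,0}}$. For this I would pass the lower density estimate of Corollary \ref{corol_lower_dens} (that is, $|\Omega_{U_j}\cap B_r(x)|\ge\kappa|B_r|$ at free boundary points) to the limit. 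Scaled free boundary points $z_h\to z\in\partial\Omega_{U_{j,0}}$ exist by Step 2 applied in reverse, combined with Step 1. Using Step 1 and Fatou, one gets $|\Omega_{U_{j,0}}\cap B_r(z)|\ge\kappa|B_r|$ for every $z\in\partial\Omega_{U_{j,0}}$. Hence no boundary point has density $0$, and $|\partial\Omega_{U_{j,0}}|=0$.

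The main technical point I expect is precisely this last step: passing the density bound to the limit needs the approximation of limit boundary points by scaled free boundary points together with some care about the measure of $\Omega_{(U_j)_{r_h,x_h}}$ near them. This is where the hypothesis $x_h\in\partial\Omega_{U_1}\cap\partial\Omega_{U_2}$ and Lemma \ref{lemma_few_density_1} (which handles density-one points, where only one phase is present) are used.

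\textbf{Hausdorff convergence.} With \eqref{eq_blow_up_limit_notzero_set_L1} established, the convergences \eqref{eq_hausdorf_full_sets}–\eqref{eq_hausdorf_boundary} follow in the standard way. Steps 1 and 2, made uniform on compact sets, show that points of $\overline\Omega_{(U_j)_{r_h,x_h}}$ accumulate only on $\overline\Omega_{U_{j,0}}$. Conversely, every point of $\overline\Omega_{U_{j,0}}$ is approximated by points of $\Omega_{(U_j)_{r_h,x_h}}$. For the complements, a limit of zero points $y_h$ of $(U_j)_{r_h,x_h}$ must be a zero of $U_{j,0}$. Conversely, at a point $y$ with $U_{j,0}(y)=0$, the lower density estimate for the complement at free boundary points, which follows from the upper density estimate of Corollary \ref{corol_upper_esti}, combined with $L^1$ convergence, gives nearby zero points. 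Boundary convergence then follows by combining the two.
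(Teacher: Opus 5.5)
\textbf{The gap is in Step 3, at ``Using Step 1 and Fatou''.} Step 1 only gives $\chi_{\Omega_{U_{j,0}}}\le\liminf_h\chi_{\Omega_{(U_j)_{r_h,x_h}}}$. Fatou then yields $|\Omega_{U_{j,0}}\cap B_r(z)|\le\liminf_h|\Omega_{(U_j)_{r_h,x_h}}\cap B_r(z)|$. This is an upper bound for the limit set, the opposite of what you need.

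The only way to transfer the bound $|\Omega_{(U_j)_{r_h,x_h}}\cap B_r(z_h)|\ge\kappa|B_r|$ is through Step 2 and reverse Fatou, using $\limsup_h\chi_{\Omega_{(U_j)_{r_h,x_h}}}\le\chi_{\overline{\Omega}_{U_{j,0}}}$. That gives $|\overline{\Omega}_{U_{j,0}}\cap\overline{B_r(z)}|\ge\kappa|B_r|$, a density bound for the closure. The closure contains exactly the set $\partial\Omega_{U_{j,0}}$ whose measure you are trying to show vanishes, so the argument is circular. A measure lower bound does not survive this limit unless you already have the $L^1$ convergence you are proving.

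Incidentally, ``Step 2 applied in reverse'' does not give zeros of $(U_j)_{r_h,x_h}$ near $z$: nothing a priori prevents the vector-valued limit from vanishing where the approximants do not. But you do not need free boundary points. Indeed, \eqref{ineq_non_dege} holds at every point of $\overline{\Omega}_{(U_j)_{r_h,x_h}}$, and Step 1 supplies such points $z_h\to z$.

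\textbf{The missing idea: pass the non-degeneracy to the limit, not the density estimate.} An $L^\infty$ lower bound is stable under uniform convergence, and you already use exactly this mechanism in Step 2. From $\norm{(U_j)_{r_h,x_h}}_{L^\infty(B_\rho(z_h))}\ge\kappa_d\Lambda_j\rho$ you get $\norm{U_{j,0}}_{L^\infty(B_{2\rho}(z))}\ge\kappa_d\Lambda_j\rho$. The Lipschitz continuity of $U_{j,0}$ then gives a ball of radius $c\rho$ inside $B_{3\rho}(z)$ on which $U_{j,0}\neq0$, i.e.\ contained in the open set $\Omega_{U_{j,0}}$ itself. Hence every point of $\overline{\Omega}_{U_{j,0}}$ has positive lower density for $\Omega_{U_{j,0}}$. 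Combined with Lemma \ref{lemma_few_density_1} and Lebesgue's theorem, this gives $|\partial\Omega_{U_{j,0}}|=0$, and your Steps 1--2 then conclude.

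This is the paper's argument, organized pointwise. At density-one points, Lemma \ref{lemma_few_density_1} places a.e.\ such point in $\Omega_{U_{j,0}}$. At density-zero points $y$, the same non-degeneracy-plus-Lipschitz argument shows $(U_j)_{r_h,x_h}(y)=0$ for $h$ large.

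\textbf{A smaller point in your Hausdorff paragraph.} Corollary \ref{corol_upper_esti} controls the complement only at points of $\partial\Omega_{U_1}\cap\partial\Omega_{U_2}$. So it cannot be invoked at an arbitrary zero of $U_{j,0}$. The paper is equally brief here, calling this step classical.
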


\begin{proof}
By the dominated convergence theorem, it is enough to prove pointwise convergence for a.e. $y \in \R^d$.
Furthermore, a.e. $y \in \R^d$ has ether density $0$ or $1$ for  $\Omega_{U_{j,0}}$.

If $y$ has density $1$, in view of Lemma \ref{lemma_few_density_1}, we may suppose that for  $y \in \Omega_{U_{j,0}}$. For $h$ large enough, the uniform convergence of ${(U_j)}_{r_h,x_h}$ to $U_{j,0}$ implies that also ${(U_j)}_{r_h,x_h}(y) \neq 0$ so that 
$\chi_{\Omega_{U_{j,0}}}(y)= \lim_{h \to \infty}\chi_{\Omega_{{(U_j)}_{r_h,x_h}}}(y)$.

If instead $y$ has density $0$, the continuity of $U_{j,0}$ yields $U_{j,0}(y)=0$. Suppose by contradiction that ${(U_j)}_{r_h,x_h}(y) \neq 0$ for any $h$.
Then, by Proposition \ref{prop_non_dege},  $\norm{(U_j)_{r_h,x_h}}_{L^\infty(B_\rho(y))} \ge \eta \rho$ for some $\eta>0$ that does not depend on $h$ and any small $\rho>0$. It follows that also $\norm{U_{j,0}}_{L^\infty(B_\rho(y))} \ge \eta \rho$ by uniform  convergence of $(U_j)_{r_h,x_h}$ to $U_{j,0}$. Hence, since $U_{j,0}$ is Lipschitz, it is standard  (see for example the proof of \cite[Lemma 5.1]{V_book})  to deduce that the density of  $y$ in $\Omega_{U_{j,0}}$ is strictly positive, a contradiction. 
It follows that ${(U_j)}_{r_h,x_h}(y)=0$ for $h$ large enough  which yields \eqref{eq_blow_up_limit_notzero_set_L1}.

It is a classical fact that \eqref{eq_blow_up_limit_notzero_set_L1} together with uniform density estimates, that is \eqref{ineq_lower_dens} and \eqref{ineq_upper_dens}, yield \eqref{eq_hausdorf_full_sets}. Furthermore, \eqref{eq_hausdorf_boundary} is a consequence of  \eqref{eq_hausdorf_full_sets}.
\end{proof}

For any any $R>0$ and any $V\in H^1(B_R,\Sigma)$ we define  the shape variation  $\delta \mc{F}_{\Lambda}(V)$ as
\begin{equation}\label{def_deltaF0}
\delta\mc{F}_\Lambda(V) [\xi]:= \sum_{i=1}^k \int_{B_R} [-2\nabla  v_i\cdot D\xi   \nabla  v_i+|\nabla  v_i|^2 \dive(\xi)] \, dx
+\Lambda_1\int_{\Omega_{V_1}}\dive(\xi) \, dx +\Lambda_2\int_{\Omega_{V_2}}\dive(\xi) \, dx
\end{equation}
for any $\xi \in C^1_c(B_R,\R^d)$.

\begin{lemma}\label{lemma_shape_der_blow_up}
Any minimizer $U$ of \eqref{prob_min} is stationary with respect to shape derivatives, that is, 
\begin{equation}\label{eq_shape_der}
\delta\mc{F}_\Lambda(U) [\xi]=0 \quad \text{ for any } \xi \in C^1_c(B_{r_0},\R^d).
\end{equation}
Furthermore, let  $U_{j,0} \in \mc{BU}_{U_j}(x)$ for $j=1,2$ and some sequence  $\{x_h\} \subset \partial \Omega_{U_1} \cap \partial \Omega_{U_2}$. Then also
\begin{equation}\label{eq_shape_der_blow_up}
\delta\mc{F}_\Lambda(U_{1,0},U_{2,0}) [\xi]=0 \quad \text{ for any } \xi \in C^1_c(B_R,\R^d) \text{ and } R>0.
\end{equation}
\end{lemma}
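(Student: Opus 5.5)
The first claim will follow from inner (domain) variations. Given $\xi \in C^1_c(B_{r_0},\R^d)$, set $\Phi_t(x):=x+t\xi(x)$; for $|t|$ small this is a $C^1$ diffeomorphism of $B_{r_0}$ onto itself that equals the identity near $\partial B_{r_0}$. We use the competitor $U_t:=U\circ\Phi_t^{-1}$. Composing each component with the same diffeomorphism preserves the constraint $w_iw_{i'}=0$ across different groups, so $U_t\in H^1(B_{r_0},\Sigma)$ and $U_t-U\in H^1_0$. Moreover $\Omega_{(U_t)_j}=\Phi_t(\Omega_{U_j})$. Changing variables $y=\Phi_t(x)$ gives
\begin{equation}
J_\Lambda(U_t,B_{r_0})=\sum_{i=1}^k\int_{B_{r_0}} |(D\Phi_t)^{-T}\nabla u_i|^2 \det D\Phi_t\,dx+\sum_{j=1}^2\Lambda_j\int_{\Omega_{U_j}}\det D\Phi_t\,dx .
\end{equation}
We expand using $\det D\Phi_t=1+t\,\dive\xi+o(t)$ and $(D\Phi_t)^{-T}=I-tD\xi^T+o(t)$, uniformly since $\xi\in C^1_c$. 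The derivative at $t=0$ is exactly $\delta\mc F_\Lambda(U)[\xi]$. Since $t=0$ is a minimum for both signs of $t$, this derivative vanishes, which proves \eqref{eq_shape_der}.

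For the blow-up limit we rescale. Each $U_{r_h,x_h}:=((U_1)_{r_h,x_h},(U_2)_{r_h,x_h})$ minimizes $J_\Lambda$ on $B_{r_0/r_h}(-x_h/r_h)$, because the functional is invariant under the one-homogeneous rescaling: the Dirichlet term and the measure term both scale like $r^d$. Fix $R$ and $\xi\in C^1_c(B_R)$. For $h$ large, $B_R$ lies in the rescaled domain, so the first part gives $\delta\mc F_\Lambda(U_{r_h,x_h})[\xi]=0$. We then pass to the limit term by term.

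\begin{enumerate}[(i)]
\item The quadratic gradient terms converge. This uses the strong $H^1(B_R)$ convergence from Proposition \ref{prop_blow_up_limit_H1}: products $\partial_a v\,\partial_b v$ converge in $L^1$, and they are tested against the bounded functions $D\xi$ and $\dive\xi$.
\item The measure terms converge. This uses the $L^1(B_R)$ convergence $\chi_{\Omega_{(U_j)_{r_h,x_h}}}\to\chi_{\Omega_{U_{j,0}}}$ from Proposition \ref{prop_blow_up_limit_notzero_set_L1}, which holds precisely because $x_h\in\partial\Omega_{U_1}\cap\partial\Omega_{U_2}$, tested against $\dive\xi\in L^\infty$.
\end{enumerate}

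This yields \eqref{eq_shape_der_blow_up}.

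The only delicate points are in the first part: checking that $U_t$ is admissible, meaning it still takes values in $\Sigma$ and keeps the same boundary data, and justifying differentiation under the integral. The latter is routine by dominated convergence, since $|\nabla u_i|^2\in L^1$ and the coefficients are $C^0$ in $x$ and smooth in $t$. Everything else reduces to the two convergence results already established, so I expect no real obstacle.
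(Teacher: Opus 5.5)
Your proposal is correct and follows essentially the same route as the paper. You use inner variations $U\circ\Phi_t^{-1}$ with $\Phi_t=\mathrm{id}+t\xi$, carrying out explicitly the first-variation computation that the paper only cites, then obtain stationarity of the rescalings $(U_j)_{r_h,x_h}$ on $B_R$ for large $h$ (via rescaled minimality — the functional is multiplied by $r^{-d}$ rather than being invariant — where the paper uses a direct change of variables) and pass to the limit using the strong $H^1$ convergence and the $L^1$ convergence of the characteristic functions of the positivity sets.
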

\begin{proof}
For any $\xi \in C_c^\infty( B_{r_0}, \R^d)$ and  any $t>0$  small enough  $\Psi_t(y):= y +t \xi(y)$  is a diffeomorphism. 
If we let  $U_{j,t}:=U_j \circ \Psi_t^{-1}$ for $j=1,2$, then $U_t:=(U_{1,t},U_{2,t})$ is a competitor for $U$ since $U_{1,t}U_{2,t}=0$ in $B_{r_0}$.
The minimality of $U$ yields \eqref{eq_shape_der}, see for example \cite[Lemma 9.5]{V_book}. 

With a change of variable for any $R>0$ and $h$ large enough, \eqref{eq_shape_der_blow_up} holds for $(U_j)_{r_h,x_h}$. In view of Propositions \ref{prop_blow_up_limit_H1}-\ref{prop_blow_up_limit_notzero_set_L1}, we may prove \eqref{eq_shape_der_blow_up} passing to the limit as $h \to \infty$.
\end{proof}

\subsection{Weiss monotonicity formula}\label{sub_Weiss}
Let for any $x \in \R^d$ and $r>0$
\begin{equation}\label{def_Weiss}
W_\Lambda(r,x,V):=\frac{1}{r^d} \left(\int_{B_r(x)} |\nabla V|^2 \, dy +\Lambda|\Omega_V\cap B_r(x)|\right)-\frac{1}{r^{d+1}}\int_{\partial B_r(x)} |V|^2 \, d \mc {H}^{d-1}
\end{equation}
for any $V \in H^1(B_r(x), \R^m)$ for some $m \in \mathbb{N}\setminus\{0\}$.
It is a classical computation to see that 
\begin{equation}
\pd{W_\Lambda}{r}(r,x,V)= \frac{d}{r}\left(W_\Lambda(1,0,(Z_V)_{r,x})-W_\Lambda(1,0,V_{r,x})\right)
+\frac{1}{r}\int_{\partial B_1} |y \cdot \nabla V_{r,x} -V_{r,x}|^2 \, d\mc{H}^{d-1},
\end{equation}
where $(Z_V)_{r,x}$ is the one homogeneous replacement of $V$ on $B_1$, that is, 
\begin{equation}
(Z_V)_{r,x}:B_1 \to \R^{m}, \quad (Z_V)_{r,x}:=|x| V(x/|x|),
\end{equation}
we refer to \cite[Chapter 9]{V_book}.

\begin{proposition}\label{prop_mono_Weiss}
For any $x \in \partial\Omega_{U_1} \cap \partial\Omega_{U_2}$ and any $r \in (0,r_0-|x-x_0|)$ the function
\begin{equation}
\Phi_{\Lambda}(r,x,U_1,U_2):=W_{\Lambda_1}(U_1,r,x)+W_{\Lambda_2}(U_2,r,x)
\end{equation}
is monotone in $r$ and continuous in $x$. Furthermore, for a.e. $r \in (0,r_0-|x-x_0|)$
\begin{equation}\label{eq_equi_part_energy}
\pd{\Phi}{r}(r,x,U_1,U_2)= \frac{2}{r}\sum_{j=1}^2\int_{\partial B_1} |y \cdot \nabla (U_j)_{r,x} -(U_j)_{r,x}|^2 \, d\mc{H}^{d-1}(y)
\end{equation}
and finally
\begin{equation}\label{eq_Weiss_density}
\lim_{r \to 0^+}\Phi_{\Lambda}(r,x,U_1,U_2)=\frac{\omega_d}{2}(\Lambda_1 + \Lambda_2).
\end{equation}
\end{proposition}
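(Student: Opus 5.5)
We will treat the three claims of Proposition \ref{prop_mono_Weiss} in order: the derivative formula, monotonicity and continuity, and the limit. All three rest on the stationarity \eqref{eq_shape_der} of Lemma \ref{lemma_shape_der_blow_up}.

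\textbf{Derivative formula.} Fix $x$ and $r$. We test \eqref{eq_shape_der} with radial vector fields $\xi_\epsilon(y)=\phi_\epsilon(|y-x|)(y-x)$, where $\phi_\epsilon$ is a cut-off approximating $\chi_{[0,r]}$. For a.e. $r$ this gives the Pohozaev-type identity
\begin{equation}
(d-2)\int_{B_r(x)}|\nabla U|^2\,dy+d\sum_{j=1}^2\Lambda_j|\Omega_{U_j}\cap B_r(x)| = r\int_{\partial B_r(x)}\Big(|\nabla U|^2-2|\partial_\nu U|^2\Big)\,d\mathcal H^{d-1}+r\sum_{j=1}^2\Lambda_j\mathcal H^{d-1}(\partial B_r(x)\cap\Omega_{U_j}).
\end{equation}
By Proposition \ref{prop_existence_min}, each $u_i$ is harmonic in $\Omega_{U_{j_i}}$. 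Since $u_i$ is Lipschitz, we also have $\int_{B_r(x)}|\nabla u_i|^2=\int_{\partial B_r(x)}u_i\partial_\nu u_i$: apply the Green identity on $\{|u_i|>\epsilon\}$ and let $\epsilon\to0$. Next we differentiate $\Phi_\Lambda$ directly in $r$, so that
\begin{equation}
\frac{d}{dr}\Big(r^{-d}\int_{B_r}(\cdots)\Big)=-\frac{d}{r^{d+1}}\int_{B_r}(\cdots)+\frac{1}{r^d}\int_{\partial B_r}(\cdots).
\end{equation}
We also differentiate the boundary term $r^{-d-1}\int_{\partial B_r}|U|^2$. Substituting the two identities above cancels the volume terms, the measure terms and the tangential gradient terms. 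What remains is $\frac{2}{r^{d+2}}\int_{\partial B_r(x)}|(y-x)\cdot\nabla U-U|^2$, and after rescaling this is exactly \eqref{eq_equi_part_energy}; the splitting into $j=1,2$ is just a splitting of components. Monotonicity follows because $\Phi_\Lambda$ is absolutely continuous in $r$: it is a sum of integrals of $L^1$ densities in $r$. Continuity in $x$ for fixed $r$ follows from the continuity of translations in $L^1$ for $|\nabla U|^2$ and $\chi_{\Omega_{U_j}}$, together with the uniform continuity of $U$ on spheres.

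\textbf{Density limit.} Monotonicity and the Lipschitz bound (Proposition \ref{prop_lip}) give that $\Phi_\Lambda$ is bounded, so $\Phi(0^+,x)$ exists. Take the blow-up $U_{j,0}$ at the fixed point $x$ along $r_h\to0$. By Proposition \ref{prop_blow_up_limit_H1} and Proposition \ref{prop_blow_up_limit_notzero_set_L1} (with constant sequence $x_h=x$, which is allowed since $x\in\partial\Omega_{U_1}\cap\partial\Omega_{U_2}$), we have $\Phi(sr_h,x,U)=\Phi(s,0,U_{1,0},U_{2,0})\to$ the same constant for every $s$. Hence $\Phi(\cdot,0,U_0)$ is constant in $s$, and \eqref{eq_equi_part_energy} applied to $U_0$ forces $U_0$ to be $1$-homogeneous. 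The formula passes to the limit because $U_0$ is stationary by \eqref{eq_shape_der_blow_up} and each component is harmonic on its positivity set. For a $1$-homogeneous $U_0$, the harmonicity identity gives $\int_{B_1}|\nabla U_0|^2=\int_{\partial B_1}|U_0|^2$, so that
\begin{equation}
\Phi(0^+,x)=\Lambda_1|\Omega_{U_{1,0}}\cap B_1|+\Lambda_2|\Omega_{U_{2,0}}\cap B_1|.
\end{equation}
It remains to show that $\Omega_{U_{1,0}},\Omega_{U_{2,0}}$ are complementary half-spaces up to null sets. Both cones are nonempty by non-degeneracy (Proposition \ref{prop_non_dege}). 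The functions $|U_{1,0}|$ and $|U_{2,0}|$ are disjointly supported, subharmonic and $1$-homogeneous. Apply Theorem \ref{theo_multi_mono} with $m=2$, or equivalently the Friedland--Hayman inequality on the sphere: the two spherical supports have first eigenvalues equal to $d-1$, so the characteristic exponents are both $1$. The equality case of that inequality forces the two supports to be complementary hemispheres, hence the two cones are complementary half-spaces. This gives $\frac{\omega_d}{2}(\Lambda_1+\Lambda_2)$.

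\textbf{Main obstacle.} The delicate step is the rigidity in this last argument. Each $|U_{j,0}|$ is only subharmonic, not a first eigenfunction, so we must first show that the spherical trace of each component is a first Dirichlet eigenfunction with eigenvalue $d-1$ on its support. We would do this using $1$-homogeneity and harmonicity of every $u_{i,0}$ on $\Omega_{U_{j,0}}$, which gives $\lambda_1(\Omega_{U_{j,0}}\cap\partial B_1)\le d-1$. Combined with Friedland--Hayman, this forces equality and hence hemispheres. If this rigidity proves awkward, the fallback is to use the shape stationarity of $U_0$ across the interface to exclude a nonempty zero set.
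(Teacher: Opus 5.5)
Your proposal is correct, and up to the last step it matches the paper: the derivative formula \eqref{eq_equi_part_energy}, monotonicity, and the reduction of the limit to $\Lambda_1|\Omega_{U_{1,0}}\cap B_1|+\Lambda_2|\Omega_{U_{2,0}}\cap B_1|$ via $1$-homogeneous blow-ups at the fixed point $x$. The only difference there is that you derive the Pohozaev identity and $\int_{B_r}|\nabla u_i|^2=\int_{\partial B_r}u_i\partial_\nu u_i$ by hand, where the paper cites the Weiss computation for stationary maps.

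The routes diverge at the last step. The paper needs only the measures. A nonzero component of $U_{j,0}$ solves $-\Delta_{\mathbb{S}^{d-1}}u=(d-1)u$ on $\Omega_{U_{j,0}}\cap\mathbb{S}^{d-1}$, so $\lambda_1(\Omega_{U_{j,0}}\cap\mathbb{S}^{d-1})\le d-1$. The Faber--Krahn \emph{inequality} on the sphere then gives each phase at least half the sphere, and disjointness forces exactly half for both.

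You use the same bound $\lambda_1\le d-1$, i.e. $\alpha_j\le 1$, feed it into Friedland--Hayman $\alpha_1+\alpha_2\ge 2$, and invoke its \emph{equality} case to get complementary hemispheres. This is valid but heavier than needed. Unpacked via Sperner symmetrization, the Friedland--Hayman equality case amounts to the Faber--Krahn comparison plus the rigidity case of Faber--Krahn on the sphere. That rigidity is delicate for general open sets and is irrelevant to the value of the limit. What your route buys is flatness of fixed-center blow-ups already at this stage; the paper proves this afterwards in Proposition \ref{prop_blo_up_flat}, using precisely that rigidity.

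Two minor points. First, Theorem \ref{theo_multi_mono} as stated is only an upper bound and gives no information for $1$-homogeneous functions, since both sides are constant in $r$. So it cannot stand in for Friedland--Hayman here, and the rigidity must come from the Friedland--Hayman equality case itself. Second, your fallback via stationarity across the interface is unnecessary.
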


\begin{proof}
In view of \eqref{eq_shape_der}, we can easily prove  \eqref{eq_equi_part_energy}  taking into account \cite[Lemma 9.8]{V_book}. The continuity with respect to $x$ is also simple. Furthermore, the monotonicity of $\Phi_{\Lambda}$ in $r$ implies that any $U_{j,0} \in \mc{BU}_{U_j}(x)$ limit of a blow-up sequence of the form ${(U_j)}_{r_h,x}$ is one-homogeneous. 
Indeed, for any $\rho>0$
\begin{equation}
\Phi_{\Lambda}(\rho,0,U_{1,0},U_{2,0})=\lim_{h \to \infty}\Phi_{\Lambda}(\rho,0,(U_1)_{r_h,x},(U_2)_{r_h,x})
=\lim_{h \to \infty}\Phi_{\Lambda}(\rho r_h,x,U_1,U_2).
\end{equation}
Hence, $\Phi_{\Lambda}(\rho,0,U_{1,0},U_{2,0})$ is constant in $\rho$. By \eqref{eq_shape_der_blow_up}, we can show that \eqref{eq_equi_part_energy} holds also for $\Phi_{\Lambda}(\rho,0,U_{1,0},U_{2,0})$ so that $\pd{\Phi_{\Lambda}}{r}(\rho,0,U_{1,0},U_{2,0})\equiv 0$ implies that $U_{j,0}$ is $1$-homogeneous.
It follows that 
\begin{equation}
\Lambda_1|\Omega_{U_{1,0}}\cap B_1| + \Lambda_2|\Omega_{U_{2,0}}\cap B_1|
=\Phi_{\Lambda}(1,0,U_{1,0},U_{2,0})=\lim_{h \to \infty}\Phi_{\Lambda}(r_h,x,U_1,U_2).
\end{equation}
Hence, it is enough to show that for $j=1,2$
\begin{equation}\label{eq_measures_half}
|\Omega_{U_{j,0}}\cap B_1|=\omega_d/2
\end{equation}
to complete the proof. To this end, we note that, since $u_{1,0}$ is harmonic and  $\Omega_{U_{1,0}}$ and one-homogeneous 
\begin{equation}\label{eq_ui0_eigen}
-\Delta_{\mb S^{d-1}} u_{1,0}=(d-1)u_{1,0} \quad \text{ in } \Omega_{U_{1,0}} \cap \mb S^{d-1}.
\end{equation}
The Faber-Krahn inequality on $\mb S^{d-1}$ implies that $\mc{H}^{d-1}( \Omega_{U_{1,0}}\cap \mb S^{d-1}) \ge d \omega_d/2$. The one-homogeneity of $U_{1,0}$
then yields $|\Omega_{U_{1,0}}\cap B_1| \ge \omega_d/2$. Similarly, $|\Omega_{U_{2,0}}\cap B_1| \ge \omega_d/2$.
Since $|U_{1,0}||U_{2,0}|=0$, we have proved \eqref{eq_measures_half}.
\end{proof}

\begin{proposition}\label{prop_blo_up_flat}
Let  $U_{j,0} \in \mc{BU}_{U_j}(x)$ for $j=1,2$  for some blow up sequence $(U_j)_{r_h,x_h}$ with $\{x_h\} \subset \partial \Omega_{U_1} \cap \partial \Omega_{U_2}$.  
Then there exist $\nu \in \mathbb{S}^{d-1}$ and  $C_j \in \R^{k_j}$ with $|C_j|=\sqrt{\Lambda_j}$ for $j=1,2$ such that
\begin{equation}\label{eq_blow_up_explicit}
U_{1,0}(y)=C_1(y\cdot \nu)^+ \quad  \text{ and } \quad U_{2,0}(y)=C_2(y\cdot \nu)^-.
\end{equation}
\end{proposition}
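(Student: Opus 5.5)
The plan is to show first that the limit pair $(U_{1,0},U_{2,0})$ is one-homogeneous, then that each phase fills exactly a half-space, and finally to read off the constants from the free boundary condition.

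\textbf{Homogeneity.} The difficulty is that the centers $x_h$ vary, so Proposition \ref{prop_mono_Weiss} does not apply directly at a single point. We use that $\Phi_\Lambda(r,x,U_1,U_2)$ is monotone in $r$, continuous in $x$, and tends to the constant $\frac{\omega_d}{2}(\Lambda_1+\Lambda_2)$ as $r\to0$ at every point of $\partial\Omega_{U_1}\cap\partial\Omega_{U_2}$. Fix $0<\rho<\sigma$ and $\e>0$. Choose $s>0$ with $\Phi_\Lambda(s,x)\le \frac{\omega_d}{2}(\Lambda_1+\Lambda_2)+\e$. By continuity in $x$, the same bound holds at $x_h$ with $2\e$ for $h$ large. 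Monotonicity then gives, for $h$ large so that $\sigma r_h<s$,
\[
\tfrac{\omega_d}{2}(\Lambda_1+\Lambda_2)\le \Phi_\Lambda(\rho r_h,x_h)\le \Phi_\Lambda(\sigma r_h,x_h)\le \tfrac{\omega_d}{2}(\Lambda_1+\Lambda_2)+2\e .
\]
By scaling, $\Phi_\Lambda(\rho r_h,x_h,U_1,U_2)=\Phi_\Lambda(\rho,0,(U_1)_{r_h,x_h},(U_2)_{r_h,x_h})$. By Propositions \ref{prop_blow_up_limit_H1} and \ref{prop_blow_up_limit_notzero_set_L1} (strong $H^1$ convergence, $L^1$ convergence of the positivity sets, uniform convergence for the boundary terms), this passes to the limit. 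Hence $\Phi_\Lambda(\rho,0,U_{1,0},U_{2,0})\equiv \frac{\omega_d}{2}(\Lambda_1+\Lambda_2)$ for all $\rho$. Lemma \ref{lemma_shape_der_blow_up} gives stationarity of the limit, so formula \eqref{eq_equi_part_energy} holds for it. Since the derivative vanishes, $U_{1,0}$ and $U_{2,0}$ are one-homogeneous.

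\textbf{Each phase is a half-space.} Neither limit is trivial. The point $0$ lies in $\partial\Omega_{(U_j)_{r_h,x_h}}$, and non-degeneracy (Proposition \ref{prop_non_dege}) is uniform in $h$, so it passes to the limit: $\fint_{\partial B_r}|U_{j,0}|\ge\kappa_d\Lambda_j r$. So both $\Omega_{U_{j,0}}$ are nonempty cones.

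By homogeneity and Proposition \ref{prop_blow_up_limit_H1}, each nonzero component $u_{i,0}$ solves $-\Delta_{\mb S^{d-1}}u_{i,0}=(d-1)u_{i,0}$ on $\Omega_{U_{j,0}}\cap\mb S^{d-1}$, that is \eqref{eq_ui0_eigen}. Faber--Krahn then gives $\mc H^{d-1}(\Omega_{U_{j,0}}\cap \mb S^{d-1})\ge d\omega_d/2$. The two phases are disjoint, so each spherical trace has measure exactly half the sphere. Equality in Faber--Krahn on the sphere forces $\Omega_{U_{j,0}}\cap\mb S^{d-1}$ to be a hemisphere up to a null set, with $d-1$ as its first eigenvalue. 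Since $U_{j,0}$ is continuous, $\Omega_{U_{j,0}}$ is open, so it equals an open half-space $\{y\cdot\nu_j>0\}$. The two half-spaces are disjoint and each has measure $\omega_d/2$ in $B_1$, so $\nu_2=-\nu_1$.

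On each hemisphere the first eigenspace is one-dimensional. Hence $u_{i,0}=c_i (y\cdot\nu)^+$ for $i\in A_1$, and similarly on the other side. This gives $U_{1,0}=C_1(y\cdot\nu)^+$ and $U_{2,0}=C_2(y\cdot\nu)^-$ with $C_j\in\R^{k_j}$ and both $C_j\neq0$.

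\textbf{Identifying the constants.} This is the step I expect to be the main obstacle. Two conditions are available.

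\begin{itemize}
\item The Weiss energy: for a one-homogeneous function, $\int_{B_1}|\nabla V|^2=\int_{\partial B_1}|V|^2$ (by Euler's identity and harmonicity in the positivity set). So the gradient and boundary terms in $\Phi_\Lambda(1,0,\cdot)$ cancel. The identity $\Phi_\Lambda=\frac{\omega_d}{2}(\Lambda_1+\Lambda_2)$ is then automatic and carries no information.
\item The stationarity \eqref{eq_shape_der_blow_up}: test with $\xi=\varphi\,\nu$, $\varphi\in C^1_c$, and integrate by parts in each half-space. The result is
\[
\int_{\{y\cdot\nu=0\}}\left(-|C_1|^2+\Lambda_1+|C_2|^2-\Lambda_2\right)\varphi\,d\mc H^{d-1}=0,
\]
up to sign conventions. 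This only yields the transmission condition $|C_1|^2-\Lambda_1=|C_2|^2-\Lambda_2$.
\end{itemize}

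So variations of the interface alone do not give $|C_j|=\sqrt{\Lambda_j}$. An extra one-sided condition is needed. I would get it from minimality at the scale of the sequence, not from stationarity. Suppose, say, $|C_1|^2>\Lambda_1$. Then $|C_2|^2>\Lambda_2$ as well. Use an inward-pushing competitor at the original scale: replace $U_j$ in a small ball $B_{\rho r_h}(x_h)$ by a truncation that opens a thin gap of zero set between the phases. Because both gradient jumps exceed $\sqrt{\Lambda_j}$, this lowers the energy by an amount proportional to $(|C_1|^2-\Lambda_1)\rho^{d}r_h^{d}$, minus errors that are $o(r_h^d)$ by uniform convergence, contradicting minimality. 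For the opposite inequality, suppose $|C_1|^2<\Lambda_1$. Then $|C_2|^2<\Lambda_2$, and an outward perturbation in which both phases shrink is not admissible. So instead I would use the one-phase comparison near the interface together with the density estimates (Corollaries \ref{corol_lower_dens} and \ref{corol_upper_esti}).

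If this direct route becomes too technical, the fallback is to pass to the limit in minimality itself. This requires proving that $(U_{1,0},U_{2,0})$ is a local minimizer of $J_\Lambda$, in the style of the $\Gamma$-convergence argument of \cite{MTV_reg_vect}. For half-plane solutions of the two-phase problem, the classification in \cite{DPSV_branch} then forces $|C_j|^2\ge\Lambda_j$ with equality whenever the zero set is negligible. Combined with $|\Omega_{U_{1,0}}\cup\Omega_{U_{2,0}}|=|B_1|$ and the transmission condition, this gives $|C_j|=\sqrt{\Lambda_j}$.
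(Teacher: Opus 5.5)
Your first two steps are the paper's argument. These are the constancy of $\Phi_\Lambda(\cdot,0,U_{1,0},U_{2,0})$ from continuity in $x$ and monotonicity in $r$, then one-homogeneity, the spherical eigenvalue equation and Faber--Krahn. Your diagnosis of the last step is also correct. Testing \eqref{eq_shape_der_blow_up} with $\xi=\varphi\nu$ yields only $|C_1|^2-\Lambda_1=|C_2|^2-\Lambda_2$, and the Weiss energy of any such half-space pair equals $\frac{\omega_d}{2}(\Lambda_1+\Lambda_2)$ whatever the $C_j$ are. The paper's proof ends at exactly this point with a one-line appeal to \eqref{eq_shape_der_blow_up}, which by your computation cannot suffice.

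The genuine gap in your proposal is the repair, whose signs are reversed. Open a thin strip of zero set between the phases, i.e. replace $|U_j|$ by $(|U_j|-\e\varphi)^+$ and keep the direction. Per unit area of interface, the Dirichlet energy of phase $j$ grows by about $\e|C_j|\varphi$ and its measure term drops by about $\e\Lambda_j\varphi/|C_j|$. The first-order change therefore has the sign of $|C_j|^2-\Lambda_j$, which is the same for $j=1,2$ by the transmission identity. So this competitor rules out $|C_j|^2<\Lambda_j$. That is your second case, where shrinking both phases is perfectly admissible and is exactly the right variation. It gives nothing when $|C_j|^2>\Lambda_j$. Your fallback misreads the two-phase classification in the same way. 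At contact points where the zero set is negligible, the blow-ups are $\alpha(y\cdot\nu)^+-\beta(y\cdot\nu)^-$ with $\alpha^2-\beta^2=\Lambda_1-\Lambda_2$, $\alpha^2\ge\Lambda_1$ and $\beta^2\ge\Lambda_2$; equality is \emph{not} forced there.

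In fact no argument can give $|C_j|=\sqrt{\Lambda_j}$, because the statement fails as written. Take $k=n=2$, $A_1=\{1\}$, $A_2=\{2\}$, $\Lambda_1=\Lambda_2=1$, $D=B_1$ and $g=(2y_d^+,2y_d^-)$. For admissible $W$ set $v:=|w_1|-|w_2|$. Since $\Omega_{w_1}\cap\Omega_{w_2}=\emptyset$, we have $J_\Lambda(W,B_1)\ge\int_{B_1}|\partial_d v|^2\,dy+|\{v\neq 0\}\cap B_1|$.

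On a chord $\{y'\}\times(-s,s)$ with $s=\sqrt{1-|y'|^2}$, the function $f=v(y',\cdot)$ runs from $-2s$ to $2s$. Let $a\le b$ be its smallest and largest zeros and put $p=a+s$, $q=s-b$, so that $p,q>0$ and $p+q\le 2s$. The function $\psi(t)=4s^2/t+t$ is convex and decreasing on $(0,2s]$. Cauchy--Schwarz and these two properties give
\[
\int_{-s}^{s}|f'|^2\,dt+|\{f\neq0\}|\ge\psi(p)+\psi(q)\ge 2\psi\Big(\frac{p+q}{2}\Big)\ge 2\psi(s)=10s .
\]
Integrating in $y'$ yields $J_\Lambda(W,B_1)\ge 5|B_1|=J_\Lambda(g,B_1)$. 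Hence $U=g$ is a minimizer and $0\in\partial\Omega_{U_1}\cap\partial\Omega_{U_2}$. The blow-up with $x_h\equiv 0$ is $U$ itself, with $|C_1|=|C_2|=2\neq 1$.

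What is true is \eqref{eq_blow_up_explicit} with
\[
|C_1|^2-|C_2|^2=\Lambda_1-\Lambda_2,\qquad |C_1|^2\ge\Lambda_1,\quad |C_2|^2\ge\Lambda_2 .
\]
Your first two steps, the transmission identity, and the gap competitor used in the right direction (transferred from the scale $r_h$) are the natural route to this corrected statement. It is also the contact condition the paper itself imposes in \eqref{prob_visc}.
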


\begin{proof}
We claim that for any $t>0$
\begin{equation}\label{proof_prop_blo_up_flat_1}
\Phi(t,x,U_{1,0},U_{2,0})=\frac{\omega_d}{2}(\Lambda_1 + \Lambda_2).
\end{equation}
To this end, by Proposition \ref{prop_mono_Weiss}, for any  $\e>0$ there exists $\rho>0$, depending only on $\e$, such that 
\begin{equation}
\frac{\omega_d}{2}(\Lambda_1 + \Lambda_2)\le \Phi_{\Lambda}(\rho,x,U_1,U_2) \le \frac{\omega_d}{2}(\Lambda_1 + \Lambda_2)+\e
\end{equation}
so that for any large enough $h \in \mathbb{N}$
\begin{equation}
\frac{\omega_d}{2}(\Lambda_1 + \Lambda_2)\le \Phi_{\Lambda}(\rho,x_h,U_1,U_2) \le \frac{\omega_d}{2}(\Lambda_1 + \Lambda_2)+2\e.
\end{equation}
By monotonicity, for any $t>0$ and $h$ large enough, that is, such that $t r_h \le \rho $,
\begin{equation}
\frac{\omega_d}{2}(\Lambda_1 + \Lambda_2)\le \Phi_{\Lambda}(t r_h,x_h,U_1,U_2) \le \frac{\omega_d}{2}(\Lambda_1 + \Lambda_2)+2\e.
\end{equation}
Equivalently 
\begin{equation}
\frac{\omega_d}{2}(\Lambda_1 + \Lambda_2)\le \Phi_{\Lambda}(t,0,(U_1)_{r_h,x_h},(U_2)_{r_h,x_h}) \le \frac{\omega_d}{2}(\Lambda_1 + \Lambda_2)+2\e
\end{equation}
thus, by Proposition \ref{prop_blow_up_limit_H1}, 
we can prove \eqref{proof_prop_blo_up_flat_1} passing to the limit as $h \to \infty$  and then as $\e \to 0^+$.

Arguing as in Proposition \ref{prop_mono_Weiss}, it follows that $U_{1,0}$ is one-homogeneous, that $|\Omega_{U_{j,0}}\cap B_1|=\omega_d/2$
and that $u_{i,0}$ solves \eqref{eq_ui0_eigen}.

By the Faber-Krahn inequality on $\mb S^{d-1}$ it follows that $\Omega_{U_{1,0}}\cap B_1=\{x \cdot \nu>0\}$ and consequently $\Omega_{U_{2,0}}\cap B_1=\{x \cdot \nu<0\}$ for some $\nu \in \mb{S}^{d-1}$.
Hence, \eqref{eq_ui0_eigen} implies that $u_{i,0}(y)=c_i (y \cdot \nu)^+$ for any $i \in A_1$ 
while $u_{i,0}(y)=c_i (y \cdot \nu)^-$ for any $i \in A_2$ for some $c_i \in \R$. Finally, by  \eqref{eq_shape_der_blow_up},  we conclude that $U_{j,0}$ must be as in \eqref{eq_blow_up_explicit}.
\end{proof}

\subsection{Viscosity solutions}\label{subsec_visco}
In this section we show that minimizers are solutions in a  viscosity sense to an overdetermined problem at two-phase and branching points.

\begin{definition}\label{def_touch}
Let $\Omega$ be an open set. 
We say that a continuous function $Q: \Omega \to \mathbb{R}$ touches a function $w: \Omega \to \mathbb{R}$ from below (resp. from above) at a point $x \in \Omega$ if
\begin{equation}
Q(x) = w(x) \quad \text{and} \quad Q(y) - w(y) \leq 0 \; (\text{resp. } Q(y) - w(y) \geq 0)
\end{equation}
for every $y$ in a neighborhood of $x$ and $Q$ is differentiable at $x$.
\end{definition}

Let $x_0$ be a two-phase or a branching point and $r \in (0,r_0)$. We are going to show that $U$ is a viscosity solution of the problem
\begin{equation}\label{prob_visc}
\begin{cases}
-\Delta U_j=0, \text{ in }B_{r}(x_0)\cap \Omega_{U_j} \text{ for } j=1,2,\\
|\nabla |U_1||^2-|\nabla |U_2||^2=\Lambda_1-\Lambda_2, \text{ on }B_r(x_0) \cap \partial\Omega_{U_1}\cap \partial\Omega_{U_2},\\
|\nabla |U_1||^2 \ge \Lambda_1,|\nabla |U_2||^2 \ge \Lambda_2 \text{ on }B_r(x_0) \cap \partial\Omega_{U_1}\cap \partial\Omega_{U_2},\\
|\nabla |U_1||^2=\Lambda_1\text{ on }B_r(x_0) \cap (\partial\Omega_{U_1}\setminus \partial\Omega_{U_2}),\\
|\nabla |U_2||^2=\Lambda_2 \text{ on }B_r(x_0) \cap (\partial\Omega_{U_2}\setminus \partial\Omega_{U_1}),\\
\end{cases}
\end{equation}
in the sense of Definition \ref{def_visc} below.

\begin{definition}\label{def_visc}
We say that $U$ is a solution of \eqref{prob_visc} if, letting $\tilde{U}=:|U_1|-|U_2|$, the following holds.
Suppose that $Q$ touches $\tilde{U}$ from below at $x$. Then:
\begin{itemize}
\item[(A.1)] If $x \in B_r(x_0) \cap (\partial \Omega_{U_1}\setminus \partial  \Omega_{U_2})$,
then $|\nabla Q^+(x)| \leq \sqrt{\Lambda_1}$.
\item[(A.2)] If $x \in B_r(x_0) \cap (\partial \Omega_{U_2}\setminus \partial  \Omega_{U_1})$, 
then $Q^+ \equiv 0$ in a neighborhood of $x$ and
\begin{equation}
|\nabla Q^-(x)| \geq \sqrt{\Lambda_2}  .
\end{equation}
\item[(A.3)] If $x \in B_r(x_0) \cap \partial \Omega_{U_1} \cap \partial  \Omega_{U_2}$,
then $|\nabla Q^-(x)| \geq \sqrt{\Lambda_2}$ and
\begin{equation}
|\nabla Q^+(x)|^2 - |\nabla Q^-(x)|^2 \le \Lambda_1-\Lambda_2.
\end{equation}
\end{itemize}
Suppose that $Q$ touches $\tilde{U}$ from above at $x$.  Then:
\begin{itemize}
\item[(B.1)] If $x \in B_r(x_0) \cap (\partial \Omega_{U_1}\setminus \partial  \Omega_{U_2})$, then $Q^- \equiv 0$ in a neighborhood of $x$ and
\begin{equation}
|\nabla Q^+(x)| \geq \sqrt{\Lambda_1}.
\end{equation}
\item[(B.2)] If $x \in B_r(x_0) \cap (\partial \Omega_{U_2}\setminus \partial  \Omega_{U_1}) $, 
then $|\nabla Q^-(x)| \leq \sqrt{\Lambda_2}$.
\item[(B.3)] If $x \in B_r(x_0) \cap \partial \Omega_{U_1} \cap \partial  \Omega_{U_2}$, 
then $|\nabla Q^+(x)| \geq \sqrt{\Lambda_1}$ and
\begin{equation}
|\nabla Q^+(x)|^2 - |\nabla Q^-(x)|^2 \ge \Lambda_1 - \Lambda_2.  
\end{equation}
\end{itemize}
\end{definition}

\begin{proposition}\label{prop_visc}
Let $x_0 \in  \partial \Omega_{U_1} \cap \partial \Omega_{U_2}$. Then  $U$ is a viscosity solution of  \eqref{prob_visc} in $B_r(x_0)$ for any $r \in (0,r_0)$ in the sense of Definition \ref{def_visc}.
\end{proposition}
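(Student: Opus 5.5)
The plan is a blow-up argument at the touching point, reducing each condition in Definition \ref{def_visc} to a statement about blow-up limits classified earlier. Let $Q$ touch $\tilde U=|U_1|-|U_2|$ from below (the case from above is symmetric) at $x\in B_r(x_0)\cap(\partial\Omega_{U_1}\cup\partial\Omega_{U_2})$. Write $\nabla Q(x)=:p$ and consider the blow-up sequence $U_{r_h,x}$ with fixed center $x$. By Lipschitz continuity (Proposition \ref{prop_lip}), Proposition \ref{prop_blow_up_limit_H1} and Proposition \ref{prop_blow_up_limit_notzero_set_L1}, along a subsequence $(U_j)_{r_h,x}\to U_{j,0}$ uniformly and strongly in $H^1_{loc}$. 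Since $Q$ is differentiable at $x$, the rescalings $Q_{r_h,x}$ converge uniformly on compact sets to the linear function $y\mapsto p\cdot y$. Touching from below therefore passes to the limit and gives
\[
p\cdot y\le |U_{1,0}|(y)-|U_{2,0}|(y)\quad\text{for all } y\in\R^d.
\]

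\textbf{Case (A.3): $x$ a two-phase or branching point.} Proposition \ref{prop_blo_up_flat}, applied with constant sequence $x_h=x$, gives
\[
\tilde U_0:=|U_{1,0}|-|U_{2,0}|=\sqrt{\Lambda_1}(y\cdot\nu)^+-\sqrt{\Lambda_2}(y\cdot\nu)^-.
\]
A linear function lying below this two-plane function must be of the form $p=t\nu$, with $t\le\sqrt{\Lambda_1}$ and $t\ge\sqrt{\Lambda_2}$ (compare along $\pm\nu$ and on $\nu^\perp$). Here $Q^\pm$ are understood through the gradients of the positive and negative parts. This yields $|\nabla Q^-(x)|\ge\sqrt{\Lambda_2}$ and $|\nabla Q^+(x)|\le\sqrt{\Lambda_1}$, which is the required inequality $|\nabla Q^+|^2-|\nabla Q^-|^2\le\Lambda_1-\Lambda_2$ under the convention that one reads $p$ on each side of $\{Q=0\}$. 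I would make this convention precise by writing $Q^+$ and $Q^-$ relative to the half-spaces $\{y\cdot\nu\gtrless0\}$. If needed, I would instead use a two-slope comparison, requiring $Q\le\tilde U$ separately in $\Omega_{U_1}$ and $\Omega_{U_2}$.

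\textbf{Cases (A.1) and (A.2): one-phase points.} For $x\in\partial\Omega_{U_1}\setminus\partial\Omega_{U_2}$, Corollary \ref{corol_no_triple} and closedness give a ball $B_\rho(x)$ disjoint from $\Omega_{U_2}$. There $U_1$ is a local minimizer of the vectorial Bernoulli functional \eqref{J_vec_ben}. Then $|U_1|\ge Q$ near $x$, and the known viscosity property of vectorial one-phase minimizers (\cite{MTV_reg_vect}, \cite{DT_impr}) gives $|\nabla Q^+(x)|\le\sqrt{\Lambda_1}$. Alternatively, argue directly: blow-ups at $x$ are one-homogeneous minimizers whose $|U_{1,0}|$ lies above $(p\cdot y)^+$. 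Since the blow-up has density at most $1$ and satisfies $|\nabla|U_{1,0}||=\sqrt{\Lambda_1}$ on its regular part, an explicit comparison bounds $|p|$.

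For (A.2), near $x$ we have $\tilde U=-|U_2|\le0$. Hence $Q\le0$ near $x$ and $Q^+\equiv0$. By non-degeneracy (Proposition \ref{prop_non_dege}) the blow-up $|U_{2,0}|$ is a nontrivial one-phase cone with $|U_{2,0}|\le -p\cdot y$ wherever it is positive. A subsolution argument bounds $|p|$ from below by $\sqrt{\Lambda_2}$: $|U_{2,0}|$ is subharmonic, vanishes outside $\{-p\cdot y>0\}$, and its gradient on the free boundary is $\sqrt{\Lambda_2}$. This is the standard argument of \cite{MTV_reg_vect} for one-phase points.

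The main obstacle I expect is exactly the one-phase statements (A.2) and (B.1), where the touching is from the "wrong" side. The blow-up there need not be flat; a singular cone is possible. So I would rather not classify blow-ups at all, and instead quote the viscosity characterization already known for vectorial Bernoulli minimizers, applied to $U_j$ in the ball where it is a one-phase minimizer. The two-phase cases reduce cleanly to Proposition \ref{prop_blo_up_flat}.
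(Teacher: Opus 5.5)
Your overall route is the same as the paper's. You blow up at the touching point with fixed centre, invoke Proposition~\ref{prop_blo_up_flat} with $x_h\equiv x$ at two-phase and branching points, and at one-phase points use that $U_j$ minimizes the one-phase vectorial functional in a small ball. The two-phase step is fine. Note that it only needs $|C_1|^2-|C_2|^2=\Lambda_1-\Lambda_2$ and $|C_j|\ge\sqrt{\Lambda_j}$, and you should argue from these rather than from the exact values $|C_j|=\sqrt{\Lambda_j}$. Those exact values fail in general: for $k_1=k_2=1$ and $\Lambda_1=\Lambda_2\le A^2$, the pair $(Ay_1^+,Ay_1^-)$ minimizes in $B_1$ with its own boundary values. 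So your intermediate bound $|\nabla Q^+(x)|\le\sqrt{\Lambda_1}$ should not be used; the two-slope comparison $\alpha\le|C_1|$, $\beta\ge|C_2|$ gives (A.3) directly.

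The genuine gap is in the one-phase part, and your diagnosis of where the difficulty lies is inverted.

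In (A.2) and (B.1) the test function touches $|U_j|$ from \emph{above}. The blow-up then satisfies $|U_{j,0}|\le(q\cdot y)^+$, with $q\neq0$ by non-degeneracy. Every nodal domain on $\mathbb S^{d-1}$ of every component $u_{i,0}$ lies in a hemisphere and has first eigenvalue $d-1$, so it is that hemisphere. Hence $U_{j,0}=C(y\cdot\nu)^+$ with $|C|=\sqrt{\Lambda_j}$ by stationarity, and $|q|\ge|C|$. No singular cone can occur there.

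The delicate cases are (A.1) and (B.2), where $|U_j|$ is touched from \emph{below}. There $\Omega_{U_{j,0}}$ only contains a half-space, and since components of $U_j$ may change sign, the blow-up need not be flat. Suppose that near $x$ one has $U_2\equiv0$ and $U_1=(A\,(y-x)\cdot e,0,\dots,0)$ with $A^2>\Lambda_1$. This is a minimizer of the one-phase vectorial functional in any ball around $x$: for a competitor $W$, $\nabla(W-U_1)=-\nabla U_1$ a.e.\ on $\{W=0\}$, so $\int|\nabla W|^2-\int|\nabla U_1|^2\ge A^2|\{W=0\}|$. Yet $Q=A\,(y-x)\cdot e$ touches $\tilde U=A|(y-x)\cdot e|$ from below at $x$ with $|\nabla Q^+(x)|=A>\sqrt{\Lambda_1}$, violating (A.1).

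So there is no ``known viscosity property'' giving (A.1) for vectorial minimizers with sign-changing components that you could quote. Your direct alternative also fails, since this density-one cone has no one-phase regular part. The missing step is to show that, in the ball under consideration, the one-phase points at which $|U_j|$ can be touched from below have flat blow-ups; in particular, such density-one points must be ruled out. This is not automatic for every $r<r_0$. The paper's one-line justification, which simply asserts that every blow-up at a one-phase point is flat, passes over the same point.
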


\begin{proof}
It is well know, see \cite[Lemma 2.11]{MTV_reg_vect},  that at any $x \in (\partial \Omega_{U_1}\setminus \partial\Omega_{U_2}) 
\cup (\partial \Omega_{U_2}\setminus \partial\Omega_{U_1})$ any $U_{j,0} \in \mc{BU}_{U_J}(x)$ is of the form $U_{j,0}(y)=C_1(y\cdot \nu)^+$ with   $C_j \in \R^{k_j}$ and  $|C_j|=\sqrt{\Lambda_j}$ for $j=1,2$.
Indeed, $U$ minimizes the vectorial Bernoulli functional in a small ball $B_{r_1}(x_0)$.

Then, also taking Proposition \ref{prop_blo_up_flat} with $x_h=x$ into account for any point 
$x \in \partial \Omega_{U_1} \cap \partial\Omega_{U_2}$, we can argue as in \cite[Lemma 2.5]{DPSV_branch} and \cite[Lemma 5.2]{MTV_reg_spec} to conclude.
\end{proof}

\section{Regularity of two phase and branching points}\label{sec_reg}
We are going to follow the approach of \cite{MTV_reg_spec,MTV_reg_vect} to show that the free boundaries $\partial \Omega_{U_j}$ are smooth in a neighborhood  of two-phase and branching points. It is based on the notion of  Reifenberg flattens, which we now recall.

\begin{definition}[Reifenberg flattens]\label{def_Reif_flat}
Let $\Omega \subset \R^d$ be open, $\delta \in (0,1/2)$ and $R>0$. We say that $\Omega$ is a $(\delta,R)$-Reifenberg flat domain if: 
\begin{enumerate}[(i)]
\item For any $x \in \Omega$ and any $0\le r\le R$ there exists an hyperplane $H_{x,r}$ containing $x$ and such that 
\begin{equation}
d_{\mc{H}}(B_r(x) \cap H_{x,r},B_r(x) \cap \partial \Omega) \le r \delta,
\end{equation}
where $d_{\mc{H}}(\cdot, \cdot)$ denotes the distance in Hausdorff sense.
\item For any $x \in \Omega$ one of the connected components of $B_R(x) \cap \{y\in \R^d: d(y, H_{x,R})>2 \delta R\}$ is contained in $\Omega$ while the other one is contained in $\R^d\setminus \overline{\Omega}$ for any $x \in \partial \Omega$.
\end{enumerate}
\end{definition}

\begin{proposition}\label{prop_Reif_flat}
Let $x_0 \in  \partial \Omega_{U_1} \cap \partial \Omega_{U_2}$. Then for any $\delta\in (0,1/2)$ there exists $R>0$  and $r_1>0$ such that $\Omega_{U_j}$ is $(\delta,R)$-Reifenberg flat domain in $B_{r_1}(x_0)$ for $j=1,2$.
\end{proposition}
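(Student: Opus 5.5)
The plan is a compactness-and-contradiction argument. Blow-ups with variable centres on the two-phase set are flat by Proposition \ref{prop_blo_up_flat}. Near branching points the remaining free boundary points are one-phase, and for those we transfer flatness using the improvement of flatness of \cite{DT_impr}.

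\textbf{Step 1: flatness at two-phase and branching points.} We first show uniform flatness at points of $\Gamma:=\partial\Omega_{U_1}\cap\partial\Omega_{U_2}$. The claim is: for every $\delta>0$ there exist $r_1,R>0$ such that for every $x\in\Gamma\cap B_{r_1}(x_0)$ and $r\le R$ there is a hyperplane $H_{x,r}\ni x$ with
\[
d_{\mc H}\bigl(B_r(x)\cap H_{x,r},\,B_r(x)\cap\partial\Omega_{U_j}\bigr)\le \delta r \quad\text{for } j=1,2,
\]
and with the two sides of $H_{x,r}$, away from a $2\delta r$ strip, lying in $\Omega_{U_1}$ and $\Omega_{U_2}$ respectively. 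Suppose not. Then there are $x_h\in\Gamma$ with $x_h\to x_0$ and $r_h\to0$ for which this fails with $H_{x_h,r_h}$ replaced by every hyperplane. Rescale to get $(U_j)_{r_h,x_h}$. By Proposition \ref{prop_blo_up_flat}, up to a subsequence the limits are $C_1(y\cdot\nu)^+$ and $C_2(y\cdot\nu)^-$. By Proposition \ref{prop_blow_up_limit_notzero_set_L1}, the sets $\partial\Omega_{(U_j)_{r_h,x_h}}$ converge locally in Hausdorff sense to $\{y\cdot\nu=0\}$. The closures of the positivity sets converge to the half-spaces $\{\pm y\cdot\nu\ge0\}$, and their complements converge correspondingly. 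This gives both conditions of Definition \ref{def_Reif_flat} in $B_1$ for $h$ large, a contradiction.

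\textbf{Step 2: points of $\partial\Omega_{U_1}\setminus\Gamma$ near $x_0$.} Let $y\in\partial\Omega_{U_1}\setminus\Gamma$ be close to $x_0$, and set $\rho(y):=d(y,\Gamma)$. In $B_{\rho(y)}(y)$ we have $U_2\equiv0$, so $U_1$ minimizes the vectorial Bernoulli functional \eqref{J_vec_ben} there. We treat two ranges of scales.

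\emph{Large scales, $r\ge c\rho(y)$.} Let $z\in\Gamma$ with $|z-y|=\rho(y)$. The flatness of Step 1 at $z$ at scale about $2r$ gives flatness at $y$ at scale $r$, with parameter $C\delta$.

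\emph{Small scales, $r\le \rho(y)$.} Apply Step 1 at $z$ at scale $\approx 2\rho(y)$. Then $\partial\Omega_{U_1}\cap B_{\rho(y)/2}(y)$ is $C\delta$-flat. By Lipschitz continuity and non-degeneracy, $(U_1)_{\rho(y)/2,y}$ is $\e$-close in $L^\infty(B_1)$ to a one-phase flat solution $C(x\cdot\nu)^+$ with $|C|=\sqrt{\Lambda_1}$. Here the vector coefficient is close to $C_1$, which comes again from the compactness of Step 1, now also applied to the functions and not only to the sets. The improvement of flatness for the vectorial Bernoulli problem \cite{DT_impr} then gives that $\partial\Omega_{U_1}\cap B_{\rho(y)/4}(y)$ is a $C^{1,\eta}$ graph. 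Its flatness at every scale $r<\rho(y)/4$ is controlled by the initial $\e$, hence is at most $C\delta$. The intermediate scales $r\in[\rho(y)/4,c\rho(y)]$ are covered by adjusting constants. Points of $\partial\Omega_{U_2}\setminus\Gamma$ are handled symmetrically. Finally, we rename $\delta$.

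\textbf{Main obstacle.} The hardest part is the small-scale regime of Step 2. A priori, one-phase singular cones could accumulate at branching points, with densities between $1/2$ and $1$. The point of the argument is that Step 1 has to deliver closeness of the \emph{functions} to $C_1(x\cdot\nu)^+$, uniformly. Hausdorff closeness of the sets is not enough to feed \cite{DT_impr}. The smallness threshold of \cite{DT_impr} is fixed first, and $\delta$, $r_1$, $R$ are then chosen in Step 1 accordingly. Condition (ii) of Definition \ref{def_Reif_flat} at scale $R$ for one-phase points follows from (ii) at the nearby point $z\in\Gamma$ when $\rho(y)\ll R$. When $\rho(y)$ is comparable to $R$, it follows from the graph property.
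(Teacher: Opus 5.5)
Your proposal follows the same route as the paper. The paper also splits according to whether the scale is large or small compared with the distance to $\Gamma=\partial\Omega_{U_1}\cap\partial\Omega_{U_2}$ (its Cases 1 and 2). It handles the first regime by blow-ups with variable centres on $\Gamma$ (Propositions \ref{prop_blo_up_flat} and \ref{prop_blow_up_limit_notzero_set_L1}), and the second by the improvement of flatness of \cite{DT_impr} applied to $U_1$ rescaled at the distance to $\Gamma$. Your way of producing the $\e$-closeness of the \emph{functions} that \cite{DT_impr} requires is the right one: you use compactness of $(U_1)_{2\rho_h,z_h}$, where $z_h\in\Gamma$ is the nearest point and $\rho_h\to0$. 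It is more explicit than the paper, which appeals to $\e$-flatness of $U_1$ in a fixed ball $B_{r_1}(x_0)$.

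There is, however, one false step. You claim that $U_2\equiv0$ in $B_{\rho(y)}(y)$ with $\rho(y)=d(y,\Gamma)$, and deduce that $U_1$ minimizes \eqref{J_vec_ben} there. Your own Step 1 contradicts this.
\begin{itemize}
\item At scale $2\rho(y)$, $y\in\partial\Omega_{U_1}$ lies within $2\delta\rho(y)$ of the hyperplane $H$ through $z$.
\item Since $B_{\rho(y)}(y)\subset B_{2\rho(y)}(z)$, this ball reaches points at distance at least $(1-2\delta)\rho(y)>4\delta\rho(y)$ from $H$ on the $U_2$-side.
\item By the second flatness condition at $z$, those points lie in $\Omega_{U_2}$.
\end{itemize}
So near a branching point, where $\partial\Omega_{U_1}$ and $\partial\Omega_{U_2}$ are separated only by a thin gap, $\Omega_{U_2}$ fills roughly half of $B_{\rho(y)}(y)$. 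One-phase competitors for $U_1$ could then invade $\Omega_{U_2}$. Hence minimality of $U_1$ for \eqref{J_vec_ben} in a ball of radius comparable to $\rho(y)$ does not follow from minimality of $U$. It only follows in balls of radius at most $d(y,\overline{\Omega}_{U_2})$, which can be much smaller.

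The repair is the one used in the paper.
\begin{itemize}
\item Since $B_{\rho(y)}(y)\cap\Gamma=\emptyset$, every $x\in\partial\Omega_{U_1}\cap B_{\rho(y)}(y)$ lies outside $\overline{\Omega}_{U_2}$.
\item So $U_2$ vanishes in a small neighbourhood of each such $x$, and $U_1$ is a local one-phase minimizer there.
\item Viscosity conditions are pointwise, so $U_1$ is a viscosity solution of the one-phase vectorial problem in all of $B_{\rho(y)}(y)$, in the sense of \cite[Definition 1.2]{DT_impr}.
\end{itemize}
That is all the improvement of flatness in \cite{DT_impr} needs. With this change your argument goes through.
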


\begin{proof}
Let $\delta \in (0,1/2)$.  We argue by contradiction supposing  there  exist a sequence of points $\{x_h\}\subset \Omega_{U_1}\cup \Omega_{U_2}$ with 
$x_h \to x_0$ and a sequence $r_h \to 0^+$ such that for $U_j$ in $B_{r_h}(x_h)$ such that either (i) or (ii) do not hold in Definition \ref{def_Reif_flat}.
Let  $d_h:=d(x_h,  \partial \Omega_{U_1} \cap \partial \Omega_{U_2})$ and let $y_h \in \partial \Omega_{U_1} \cap \partial \Omega_{U_2}$ be such that
$d_h=|x_h-y_h|$. We distinguish two cases.

\textbf{Case 1:$|x_h-y_h|\le 2 r_h$.}  In this case, consider the blow up sequences $(U_j)_{y_h,r_h}$ for $j=1,2$ converging to $U_{1,0}=C_1(y\cdot \nu)^+$  and $U_{2,0}:=C_2(y\cdot \nu)^-$ some $\nu \in \mb{S}^{d-1}$ and  $C_j \in \R^{k_j}$ respectively thank to Proposition \ref{prop_blo_up_flat}. 
Letting $H_x:= x+\{y \in \R^d: y \cdot \nu=0\}$, by Proposition
\ref{prop_blow_up_limit_notzero_set_L1},  since $B_{r_h}(x_h) \subset B_{3r_h}(y_h)$,
\begin{multline}
d_{\mc{H}}(B_{r_h}(x_h) \cap (H_x-y_h),B_{r_h}(x_h) \cap \partial \Omega_{U_j}) \le d_{\mc{H}}(B_{3r_h}(y_h) \cap (H_x-y_h),B_{3r_h}(y_h) \cap \partial \Omega_{U_j})\\
= 3 r_h d_{\mc{H}}(B_1 \cap H_x,B_1 \cap \partial \Omega_{(U_j)_{3r_h,y_h}}) \le \delta r_h
\end{multline}
as soon as $r_h \le R$ with $R$ small enough. 

Furthermore, again by Proposition \ref{prop_blow_up_limit_notzero_set_L1}, letting $T_{R,\delta,x_h}:=B_R(x_h) \cap \{y\in \R^d: d(y, H_x)>2 \delta R\}$   and $T_{R,\delta,x}^{i}$ for $i=1,2$ its connected components, it follows that 
\begin{equation}
T_{R,\delta,x_h}^{i} \subset B_{R}(y_h) \cap  \Omega_{U_j} \subset  \Omega_{U_j} 
\end{equation}
for either $i=1$ or $i=2$, up to choosing a smaller $R>0$. Similarly 
\begin{equation}
T_{R,\delta,x_h}^{i} \subset B_{R}(y_h) \cap  (\R^d\setminus \overline{\Omega}_{U_j}) \subset \R^d\setminus \overline{\Omega}_{U_j}.
\end{equation}
Hence, we have reached a contradiction.

\textbf{Case 2:$|x_h-y_h| > 2 r_h$.}
We are going to reach a contradiction supposing for the sake of simplicity that $x_h \in \Omega_{U_1}$ for any $h \in \mathbb{N}$. This is not restrictive since we can reach a contradiction in the very same way if $x_h \in \partial \Omega_{U_2}$ for any $h \in \mathbb{N}$ while for the general case we can simply consider the subsequences of points $x_h$ in $ \partial \Omega_{U_1}$ and  $ \partial\Omega_{U_2}$. Furthermore, we may as well take $x_0=0$.

In view of Proposition \ref{prop_blo_up_flat} and Proposition \ref{prop_non_dege}, for any $\e>0$ there exists $r_1>0$ and $\nu \in \mathbb{S}^{d-1}$ such that 
$U_1$ is $\e$-flat in $B_{r_1}$ in the sense that 
\begin{equation}
|U_1(y)-C_1 (y\cdot\nu)^{+}| \le \e \quad \text{ and } \quad U_1\equiv 0 \text{ in } B_{r_1} \cap \{y \in \R^d: (y\cdot\nu)^{+}<-\e\}.
\end{equation}
Furthermore $(U_1)_{x_h,d_h}$ is a viscosity solution of 
\begin{equation}
\begin{cases}
-\Delta (U_1)_{x_h,d_h}=0, &\text{ in } \Omega_{(U_1)_{x_h,d_h}} \cap B_1,\\
|\nabla |(U_1)_{x_h,d_h}||=\sqrt{\Lambda_1}, &\text{ on } \partial\Omega_{(U_1)_{x_h,d_h}} \cap B_1,
\end{cases}
\end{equation}
in the  sense of \cite[Definition 1.2]{DT_impr} for any  $h \in \mathbb{N}$.  

Hence, choosing $\e>0$ small enough and thus  $r_1$ small enough, the improvement of flatness  \cite[Lemma 3.3]{DT_impr} holds   $(U_1)_{x_h,d_h}$. 
Furthermore, we may also suppose that  $\e < \delta$.

It follows that for any  there exists $R>0$, depending on $\delta$, such that for any $r \in (0,R)$
\begin{equation}
|(U_1)_{x_h,d_h}-C_1 (y\cdot\nu_{r,h})^{+}| \le \delta r \text{ in } B_r 
\quad (U_1)_{x_h,d_h}\equiv 0 \text{ in } B_r \cap \{y \in \R^d: y\cdot\nu_{r,h}<-2\delta r\},
\end{equation}
for some $\nu_{r,h} \in \mb{S}^{d-1}$ and $C_1 \in \R^{k_1}$ depending on $x_h$ and $r$.
In particular
\begin{align}
&B_r \cap \partial \Omega_{(U_1)_{x_h,d_h}}  \subset B_r \cap  \{y \in \R^d:-\delta r<y\cdot\nu_{r,h}<\delta r\},\\
&B_R \cap  \{y \in \R^d:y \cdot\nu_{r,h}>2\delta R\} \subset B_R \cap  \Omega_{(U_1)_{x_h,d_h}}, \\
&B_R\cap  \{y \in \R^d:y\cdot\nu_{r,h}<-2\delta R\} \subset B_R \cap  \R^d\setminus \overline{\Omega}_{(U_1)_{x_h,d_h}}.
\end{align}
It follows that, letting  $H_h:=\{y \in \R^d:y\cdot\nu_{r,h}=0\}$,
\begin{multline}
d_{\mc{H}}(B_{r_h}(x_h) \cap (H_h-x_h),B_{r_h}(x_h) \cap \partial \Omega_{U_1}) 
\le \frac{r_h}{d_h} d_{\mc{H}}(B_{d_h} \cap H_h,B_{d_h} \cap \partial \Omega_{(U_1)_{x_h,d_h}}) 
\le \delta r_h.
\end{multline}
Hence, we have reached a contradiction also in \textbf{Case 2}.
\end{proof}

From Proposition \ref{prop_Reif_flat}, it follows that $\Omega_{U_j}$ is a NTA domain in $B_{r_1}(x_0)$ in the sense of \cite[Theorem 3.4]{MTV_epsilon}, in view of \cite[Theorem 3.1]{KT_NTA} for some $r_1>0$ small enough.
Then by \cite[Theorem 5.2-7.9]{JK_boundary_har} and Remark \cite[Remark 3.5-3.8]{MTV_reg_vect} we have the following results.

\begin{theorem}\label{theor_conc_comp}
Let $x_0 \in  \partial \Omega_{U_1} \cap \partial \Omega_{U_2}$. There exists $r_1>0$ and $M>0$ such that 
for any $x \in B_{r_1}(x_0) \cap \partial\Omega_{U_j}$ and any  $r \in (0,r_1)$, there is exactly  one connected component of 
$B_r(x) \cap \Omega_{U_j}$ that intersects $B_{r/M}(x) \cap \Omega_{U_j}$.
\end{theorem}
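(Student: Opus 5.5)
The plan is to derive the statement from Reifenberg flatness via the NTA theory. Fix $\delta\in(0,1/2)$ small, to be chosen depending only on $d$. Proposition \ref{prop_Reif_flat} then gives $R>0$ and $r_1>0$ such that $\Omega_{U_j}$ is $(\delta,R)$-Reifenberg flat in $B_{r_1}(x_0)$ for $j=1,2$. Shrinking $r_1$, we may assume $r_1\le R$ and $B_{2r_1}(x_0)$ lies inside the region where flatness holds.

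The first step is to show that $\Omega_{U_j}\cap B_{r_1}(x_0)$ is locally an NTA domain. For this we invoke \cite[Theorem 3.1]{KT_NTA}: a $(\delta,R)$-Reifenberg flat domain with $\delta$ below a dimensional threshold is NTA at scales below $R$, with constants $M$ depending only on $d$ and $\delta$. The content is as follows.
\begin{enumerate}[(a)]
\item Corkscrew condition: for $x\in\partial\Omega_{U_j}$ and $r<R$, condition (ii) of Definition \ref{def_Reif_flat} gives a point $A_r(x)$ in the component of $B_r(x)\cap\{d(\cdot,H_{x,r})>2\delta r\}$ contained in $\Omega_{U_j}$. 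This point satisfies $B_{r/M}(A_r(x))\subset\Omega_{U_j}\cap B_r(x)$.
\item Harnack chain condition: it follows by connecting corkscrew points at consecutive dyadic scales. At comparable scales the half-space components overlap, because the approximating hyperplanes $H_{x,r}$ and $H_{x,r/2}$ are $C\delta$-close by (i).
\end{enumerate}
Note that Reifenberg flatness in the paper is stated only inside $B_{r_1}(x_0)$, i.e. localized as in \cite[Theorem 3.4]{MTV_epsilon}. We therefore use that localized version, obtaining the NTA property for points $x\in B_{r_1}(x_0)\cap\partial\Omega_{U_j}$ and radii $r<r_1$.

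The second step is the connectedness statement itself, which is essentially \cite[Remark 3.5]{MTV_reg_vect}. I would also give the direct argument. Take $z\in B_{r/M}(x)\cap\Omega_{U_j}$ and set $s:=d(z,\partial\Omega_{U_j})\le r/M$.
\begin{enumerate}[(a)]
\item Choose $\bar x\in\partial\Omega_{U_j}$ with $|z-\bar x|=s$.
\item Flatness at scale $2s$ around $\bar x$ places $z$ in the "interior" slab component, up to the $2\delta$-strip. Near that strip we instead use the ball $B_s(z)\subset\Omega_{U_j}$, which meets the interior component because $\delta$ is small.
\item Climb dyadically through the corkscrew points $A_{2^k s}(\bar x)$ until scale $\sim r/4$. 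Consecutive corkscrew balls lie in a common connected open subset of $\Omega_{U_j}\cap B_{2^{k+1}s}(\bar x)$, namely the interior half-slab.
\item Finally connect to the corkscrew point $A_{r/2}(x)$, using that $|\bar x-x|\le 2r/M$. At scale $r/2$ the flat pictures at $x$ and at $\bar x$ agree up to $O(\delta)$.
\end{enumerate}
The whole chain stays in $B_r(x)$ if $M\ge 8$. Hence every point of $B_{r/M}(x)\cap\Omega_{U_j}$ lies in the component of $B_r(x)\cap\Omega_{U_j}$ containing $A_{r/2}(x)$. That component does intersect $B_{r/M}(x)$, since $x\in\partial\Omega_{U_j}$ and the interior half-slab at scale $r/M$ is nonempty. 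This proves existence and uniqueness of the component.

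The main obstacle is the uniformity near branching points, where $\partial\Omega_{U_j}$ contains one-phase points of arbitrary density. The concern is that the hyperplanes $H_{\bar x,\rho}$ might rotate too much between scales, which would break the chain. This is exactly what the uniform choice of $\delta$ in Proposition \ref{prop_Reif_flat} controls: the step-to-step tilt is $O(\delta)$, so consecutive half-slabs overlap. The cumulative tilt does not matter because only consecutive overlaps are needed. I would therefore fix $\delta$ small depending only on $d$ first, and then take $M=M(d)$ and $r_1$ as given by Proposition \ref{prop_Reif_flat}.
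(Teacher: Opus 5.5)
Your proposal is correct and follows the paper's route. Proposition \ref{prop_Reif_flat} gives Reifenberg flatness, \cite[Theorem 3.1]{KT_NTA} (in the localized form of \cite[Theorem 3.4]{MTV_epsilon}) turns it into the NTA property near $x_0$, and the statement is then read off as in \cite[Remark 3.5]{MTV_reg_vect}. Your extra direct chain argument is also sound, with one caveat: condition (ii) of Definition \ref{def_Reif_flat} is only imposed at the top scale $R$, so the interior half-slabs at scales below $R$ that you use must first be obtained by propagating (ii) downward through the same consecutive-overlap reasoning, which is part of what \cite{KT_NTA} provides.
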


\begin{theorem}[Boundary Harnack]\label{theor_part_harn}
Let $x_0 \in  \partial \Omega_{U_1} \cap \partial \Omega_{U_2}$. There exists $r_1>0$ such that for any $K \subset B_{r_1}(x_0)$
any positive harmonic function $u$ and any harmonic function $v$ both vanishing continuously on $B_{r_1}(x_0) \cap \partial \Omega_{U_j}$ we have for some constant $c>0$
\begin{equation}
c^{-1}\frac{v(y)}{u(y)}\le \frac{v(x)}{u(x)} \le c\frac{v(y)}{u(y)} 
\quad \text{ for any } x,y \in K \cap \partial \overline{\Omega}_{U_j}.
\end{equation}
Furthermore, there exists $\beta>0$ such that $v/u$ is $\beta$-H\"older continuous in $K \cap \partial \overline{\Omega}_{U_j}$. In particular, for any $y \in K \cap \partial \Omega_{U_j}$, there exists  the limit $\lim_{x \to y}\frac{v(x)}{u(x)}$.
\end{theorem}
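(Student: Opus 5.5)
The plan is to deduce the statement from the geometric input of Proposition \ref{prop_Reif_flat} together with classical boundary Harnack theory for NTA domains, following \cite{JK_boundary_har}; no new estimate on $U$ is needed.

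\textbf{Step 1: $\Omega_{U_j}$ is NTA near $x_0$.} Fix $\delta\in(0,1/2)$ small, below the threshold in \cite[Theorem 3.1]{KT_NTA}. By Proposition \ref{prop_Reif_flat} there exist $R,r_1>0$ such that $\Omega_{U_j}$ is $(\delta,R)$-Reifenberg flat in $B_{r_1}(x_0)$ for $j=1,2$. Kenig--Toro's theorem then says that $\Omega_{U_j}\cap B_{r_1}(x_0)$ is an NTA domain in the localized sense of \cite[Theorem 3.4]{MTV_epsilon}. That is, it satisfies the interior and exterior corkscrew conditions and the Harnack chain condition, with constants $M$ and scale $r_1$ depending only on $d$ and $\delta$.

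The exterior corkscrew needs a little care. At two-phase and branching points the complement of $\Omega_{U_1}$ contains $\Omega_{U_2}$, but we cannot simply use that; we rely on part (ii) of the Reifenberg definition. That part guarantees that the component of $B_R(x)\cap\{d(\cdot,H_{x,R})>2\delta R\}$ on the "wrong" side lies in $\R^d\setminus\overline{\Omega}_{U_j}$. Applying this at every scale $r\le R$ gives exterior corkscrew balls. The same part (ii), applied at all scales, gives the interior corkscrew balls.

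\textbf{Step 2: the connected-component statement.} Let $x\in\partial\Omega_{U_j}\cap B_{r_1}(x_0)$ and $r<r_1$, and suppose $z,w\in B_{r/M}(x)\cap\Omega_{U_j}$. The Harnack chain condition joins them by a chain of balls inside $\Omega_{U_j}$. The number of balls is bounded, and each ball has radius comparable to its distance from $\partial\Omega_{U_j}$. Choosing $M$ large with respect to the NTA constants forces the whole chain to stay inside $B_r(x)$, as in \cite[Theorem 5.2]{JK_boundary_har} and \cite[Remark 3.5]{MTV_reg_vect}. Hence $z$ and $w$ lie in the same component of $B_r(x)\cap\Omega_{U_j}$. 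At least one component meets $B_{r/M}(x)$ because $x\in\partial\Omega_{U_j}$, and it is unique by the chain argument.

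\textbf{Step 3: boundary Harnack.} On the NTA domain we apply \cite[Theorem 7.9]{JK_boundary_har} (comparison principle) to positive harmonic $u$ and harmonic $v$ vanishing continuously on $\partial\Omega_{U_j}\cap B_{r_1}(x_0)$. This gives the two-sided comparability of $v/u$ on compact $K$, with $c$ depending only on the NTA constants and $K$. Iterating the comparison on dyadic scales, as in \cite[Theorem 7.9]{JK_boundary_har} and \cite[Remark 3.8]{MTV_reg_vect}, shows that the oscillation of $v/u$ on $B_{r}(y)\cap\Omega_{U_j}$ decays geometrically in $r$. This yields $\beta$-Hölder continuity of $v/u$ up to the boundary, and in particular the existence of $\lim_{x\to y}v(x)/u(x)$.

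\textbf{Main obstacle.} The delicate point is localization. The NTA property holds only inside $B_{r_1}(x_0)$, and only for $\Omega_{U_j}$ rather than for $\Omega_U$. The Jerison--Kenig theorems must therefore be applied to the local domain $\Omega_{U_j}\cap B_{r_1}(x_0)$, with all constants tracked uniformly on $K\Subset B_{r_1}(x_0)$; shrinking $r_1$ handles this. One must also check that Case 2 of Proposition \ref{prop_Reif_flat}, near branching points where one-phase points accumulate, still delivers both parts of the Reifenberg condition at every scale, so that the NTA constants do not degenerate.
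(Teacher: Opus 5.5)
Your proposal follows essentially the same route as the paper, which likewise just combines Proposition \ref{prop_Reif_flat} with \cite[Theorem 3.1]{KT_NTA} to obtain a localized NTA domain, and then reads off the conclusions from \cite{JK_boundary_har} together with the localization remarks in \cite{MTV_reg_vect}. Two small caveats: your Step 2 concerns Theorem \ref{theor_conc_comp}, and its claim that the Harnack chain has a bounded number of balls is false for $z,w$ close to $\partial\Omega_{U_j}$ (the length grows like $\log\big(|z-w|/\min\{d(z,\partial\Omega_{U_j}),d(w,\partial\Omega_{U_j})\}\big)$, so what keeps the chain inside $B_r(x)$ is that, built through corkscrew points, it stays in a ball of radius comparable to $|z-w|$); and the two-sided comparability of $v/u$ only makes sense for $v>0$, so for sign-changing $v$ (the case used later for $u_i/u_{i_j}$) you should get the H\"older continuity by writing $v$ as a difference of two nonnegative harmonic functions vanishing on $\partial\Omega_{U_j}$ and applying the positive case to each.
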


In view of Proposition \ref{prop_blo_up_flat} and Theorems \ref{theor_conc_comp}-\ref{theor_part_harn}, we can then argue as in  \cite[Lemma 3.10]{MTV_reg_vect} to conclude that at least one component of $U_j$ must have a sign on $B_{r_1}(x_0) \cap \partial \Omega_{U_j}$. More precisely, we have the following.

\begin{lemma}\label{lemma_constant_sign}
Let $x_0 \in  \partial \Omega_{U_1} \cap \partial \Omega_{U_2}$. There exists $r>0$ and $i_j \in A_j$ such that $u_{i_j}$ has constant sign in 
$B_r(x_0)\cap \Omega_{U_j}$. Moreover, there is a constant $C>0$ such that $Cu_{i_j} \ge |U_j|$ in $B_r(x_0)\cap \Omega_{U_j}$.
\end{lemma}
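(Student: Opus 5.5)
The plan follows \cite[Lemma 3.10]{MTV_reg_vect}: use the boundary Harnack principle to show that all components of $U_j$ are comparable to a single positive harmonic function in $\Omega_{U_j}$, and then use the flat blow-up to find one component that dominates.

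\textbf{Step 1: a reference function.} Fix $j\in\{1,2\}$. By Theorem \ref{theor_conc_comp}, for $r$ small there is a connected component $\mathcal{C}$ of $B_r(x_0)\cap\Omega_{U_j}$ that captures $\Omega_{U_j}$ near $x_0$ at every scale. Reifenberg flatness (Proposition \ref{prop_Reif_flat}) together with Definition \ref{def_Reif_flat}(ii) should give that $B_{r}(x_0)\cap\Omega_{U_j}$ is connected for $r$ small. On it, fix a positive harmonic function $h$ vanishing continuously on $\partial\Omega_{U_j}\cap B_r(x_0)$, for instance the harmonic measure or Green function with a pole far away. Each $u_i$ with $i\in A_j$ is harmonic in $\Omega_{U_j}$ (Proposition \ref{prop_existence_min}) and vanishes continuously on $\partial\Omega_{U_j}$, since it is Lipschitz. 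Theorem \ref{theor_part_harn}, applied to each pair $(h,u_i)$, then shows that $u_i/h$ extends $\beta$-H\"older continuously up to $\partial\Omega_{U_j}\cap K$ and is bounded.

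\textbf{Step 2: choice of $i_j$.} Set $g_i:=\lim u_i/h$ on $\partial\Omega_{U_j}$ near $x_0$, and $G:=(g_i)_{i\in A_j}$. I claim $G(x_0)\neq0$. Suppose instead $G(x_0)=0$. By the H\"older continuity, $|U_j|/h\le C\varrho^\beta$ on $B_\varrho(x_0)\cap\Omega_{U_j}$. Blow up at $x_0$ along a fixed sequence $\varrho_h\to0$ and normalize $h$ so that $h(x_0+\varrho_h e)\sim\varrho_h$ at a nontangential point $e$. The comparison principle for positive harmonic functions on NTA domains (Harnack chains plus boundary Harnack) gives $h(x)\lesssim\varrho_h$ on $B_{\varrho_h}(x_0)\cap\Omega_{U_j}$. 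Hence $\sup_{B_{\varrho_h}}|U_j|=o(\varrho_h)$, which contradicts non-degeneracy (Proposition \ref{prop_non_dege}). Normalization is the delicate point here. An alternative is to blow up the quotient: Proposition \ref{prop_blo_up_flat} says $(U_j)_{\varrho,x_0}\to C_j(y\cdot\nu)^\pm$ with $|C_j|=\sqrt{\Lambda_j}$. In the half-space limit, $h_\varrho/h_\varrho(e)$ converges to $(y\cdot\nu)^\pm$, and $u_i/h$ at the nontangential point converges to a multiple of $(C_j)_i$. So $G(x_0)$ is parallel to $C_j\neq0$.

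With $G(x_0)\ne0$, choose $i_j$ with $g_{i_j}(x_0)\neq0$, say $g_{i_j}(x_0)>0$. By continuity, $g_{i_j}\ge c>0$ on $\partial\Omega_{U_j}\cap B_r(x_0)$, and since $u_{i_j}/h$ is H\"older continuous up to the boundary, $u_{i_j}/h\ge c/2$ in $B_r(x_0)\cap\Omega_{U_j}$ for $r$ small. In particular $u_{i_j}>0$ there, which is the constant-sign claim.

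\textbf{Step 3: domination.} For every $i\in A_j$ we have $|u_i|/h\le M$ near $x_0$ by Step 1. Therefore $|U_j|\le\sqrt{k_j}\,M h\le \frac{2\sqrt{k_j}M}{c}\,u_{i_j}$ in $B_r(x_0)\cap\Omega_{U_j}$, which is the final claim.

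The main obstacle is Step 2: excluding $G(x_0)=0$. This requires linking the boundary quotients to the blow-up, which means controlling $h$ at scale $\varrho$ uniformly using NTA (Reifenberg) estimates, so that the quotient converges under blow-up to the constant vector $C_j$.
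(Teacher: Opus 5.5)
Your architecture (a positive harmonic reference function $h$, boundary Harnack for the quotients $u_i/h$, and the flat blow-up at $x_0$ to select a component) is the one the paper invokes through \cite[Lemma 3.10]{MTV_reg_vect}. However, the decisive step $G(x_0)\neq 0$ is not established, and neither of your two sketches establishes it.

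In the first sketch, the bound $|U_j|\le C\varrho^\beta h$ holds for a \emph{fixed} $h$. The bound $h\lesssim\varrho$ on $B_\varrho(x_0)$ holds only after renormalizing $h$ at scale $\varrho$, and that renormalization multiplies $C$ by $h(x_0+\varrho e)/\varrho$. Combining the two correctly gives $\sup_{B_\varrho(x_0)}|U_j|\lesssim\varrho^\beta h(x_0+\varrho e)$, so you would need $h(x_0+\varrho e)\lesssim\varrho$. That linear bound does not follow from Reifenberg flatness or the NTA property, which give at best bounds like $h(x_0+\varrho e)\lesssim\varrho^{1-\epsilon}$. For instance, the planar domain $\{|\theta|<\pi/2+1/\log(1/r)\}$ is Reifenberg flat with vanishing constant at the origin, yet its positive harmonic function vanishing on the boundary satisfies $h(r)/r\to\infty$. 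In your setting the linear bound is essentially the lemma itself, since it follows once $u_{i_j}\ge ch$.

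In the second sketch, you combine $g_i(x_0)=\lim u_i(x_0+\varrho e)/h(x_0+\varrho e)$ with $u_i(x_0+\varrho e)/\varrho\to (C_j)_i(e\cdot\nu)$. This only gives $G(x_0)=\lambda C_j$ for some $\lambda\ge 0$, with $\lambda$ a positive multiple of $\lim\varrho/h(x_0+\varrho e)$. The conclusion ``parallel to $C_j$'' says nothing when $\lambda=0$, which is exactly the case to exclude.

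The missing idea is to use the boundary Harnack estimate in its scale-invariant form.
\begin{itemize}
\item Fix $\varrho_h\to0$ with $(U_j)_{\varrho_h,x_0}\to C_j(y\cdot\nu)^{\pm}$ (Proposition \ref{prop_blo_up_flat}), and let $e:=\pm\nu/2$ lie on the positive side.
\item Reifenberg flatness is invariant under dilations, so the domains $(\Omega_{U_j}-x_0)/\varrho$ are NTA in $B_2$ with constants independent of $\varrho$.
\item Set $U^\varrho:=(U_j)_{\varrho,x_0}$ and $h^\varrho:=h(x_0+\varrho\,\cdot)/h(x_0+\varrho e)$. The quotient $U^\varrho/h^\varrho$ is $\beta$-H\"older on $B_{1/2}$ up to the boundary, with norm at most $C\|U^\varrho\|_{L^\infty(B_1)}\le CL$ uniformly in $\varrho$ (Proposition \ref{prop_lip}).
\item The boundary value of this quotient at $0$ is $G(x_0)\,h(x_0+\varrho e)/\varrho$. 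So if $G(x_0)=0$, then $|U^\varrho(se)|\le CLs^\beta h^\varrho(se)$ for $s\in(0,1/2)$.
\item Along the subsequence, $h^\varrho(se)\to s$. This uses the Carleson and doubling estimates, the Hausdorff convergence of the domains to a half-space (Proposition \ref{prop_blow_up_limit_notzero_set_L1}), and the Liouville theorem for positive harmonic functions in a half-space vanishing on its boundary.
\item Passing to the limit gives $s\sqrt{\Lambda_j}\,|e\cdot\nu|\le CLs^{1+\beta}$, which is false for small $s$.
\end{itemize}
With this in place, your Steps 2 and 3 go through as written.

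A minor correction: Theorem \ref{theor_conc_comp} does not give connectedness of $B_r(x_0)\cap\Omega_{U_j}$, only that a single component reaches $B_{r/M}(x_0)$. Define $h$ on that component and work in $B_{r/M}(x_0)$.
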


We are finally in position to prove the key result of this section, that is, we are going to show that in some small ball $B_r(x_0)$ the positive components $u_{i_j}$ of $U_j$ given by Lemma \ref{lemma_constant_sign} are viscosity solutions of the problem 
\begin{equation}\label{prob_visc_gj}
\begin{cases}
-\Delta u_{i_j}=0, \text{ in }B_r(x_0)\cap \Omega_{U_j} \text{ for } j=1,2,\\
|\nabla u_{i_1}|^2-|\nabla u_{i_2}|^2=g_1\Lambda_1-g_2\Lambda_2, \text{ on }B_r(x_0) \cap \partial\Omega_{U_1}\cap \partial\Omega_{U_2},\\
|\nabla u_{i_1}|^2 \ge g_1\Lambda_1,|\nabla u_{i_2}|^2 \ge g_2\Lambda_2 \text{ on }B_r(x_0) \cap \partial\Omega_{U_1}\cap \partial\Omega_{U_2},\\
|\nabla u_{i_1}|^2=g_1\Lambda_1\text{ on }B_r(x_0) \cap (\partial\Omega_{U_1}\setminus \partial\Omega_{U_2}),\\
|\nabla u_{i_2}|^2=g_2\Lambda_2 \text{ on }B_r(x_0) \cap (\partial\Omega_{U_2}\setminus \partial\Omega_{U_1}),\\
\end{cases}
\end{equation}
for some H\"older continuous functions $g_j:B_r\cap \partial\Omega_{U_j} \to [c_0,1]$ with $c_0>0$.

We can give a definition of viscosity solution similar to \eqref{def_visc_gj} but with boundary data $g_j \Lambda_j$.
\begin{definition}\label{def_visc_gj}
We say that $(u_{i_1},u_{i_2})$ is a solution of \eqref{prob_visc} if, letting $u=:u_{i_1}-u_{i_2}$, the following holds.
Suppose that $Q$ touches $u$ from below at $x$. Then:
\begin{itemize}
\item[(A.1)] If $x \in B_r(x_0) \cap (\partial \Omega_{U_1}\setminus \partial  \Omega_{U_2})$,
then $|\nabla Q^+(x)| \leq \sqrt{g_1\Lambda_1}$.
\item[(A.2)] If $x \in B_r(x_0) \cap (\partial \Omega_{U_2}\setminus \partial  \Omega_{U_1})$, 
then $Q^+ \equiv 0$ in a neighborhood of $x$ and
\begin{equation}
|\nabla Q^-(x)| \geq \sqrt{g_2\Lambda_2}  .
\end{equation}
\item[(A.3)] If $x \in B_r(x_0) \cap \partial \Omega_{U_1} \cap \partial  \Omega_{U_2}$,
then $|\nabla Q^-(x)| \geq \sqrt{g_2\Lambda_2}$ and
\begin{equation}
|\nabla Q^+(x)|^2 - |\nabla Q^-(x)|^2 \le g_1\Lambda_1 - g_2\Lambda_2.
\end{equation}
\end{itemize}
Suppose that $Q$ touches $u$ from above at $x$.  Then:
\begin{itemize}
\item[(B.1)] If $x \in B_r(x_0) \cap (\partial \Omega_{U_1}\setminus \partial  \Omega_{U_2})$, then $Q^- \equiv 0$ in a neighborhood of $x$ and
\begin{equation}
|\nabla Q^+(x)| \geq \sqrt{g_1\Lambda_1}.
\end{equation}
\item[(B.2)] If $x \in B_r(x_0) \cap (\partial \Omega_{U_2}\setminus \partial  \Omega_{U_1}) $, 
then $|\nabla Q^-(x)| \leq \sqrt{g_2\Lambda_2}$.
\item[(B.3)] If $x \in B_r(x_0) \cap \partial \Omega_{U_1} \cap \partial  \Omega_{U_2}$, 
then $|\nabla Q^+(x)| \geq \sqrt{g_1\Lambda_1}$ and
\begin{equation}
|\nabla Q^+(x)|^2 - |\nabla Q^-(x)|^2 \geq g_1\Lambda_1 - g_2\Lambda_2.  
\end{equation}
\end{itemize}
\end{definition}

\begin{proposition}\label{prop_visc_gj}
Let $x_0 \in  \partial \Omega_{U_1} \cap \partial \Omega_{U_2}$. Let $r>0$ such that $u_{i_j}>0$ in $B_r(x_0)\cap \Omega_{U_j}$. 
Then there exists a constant $c_0>0$ and H\"older continuous functions $g_j:B_r\cap \partial\Omega_{U_j} \to [c_0,1]$ such that $(u_{i_1},u_{i_2})$ is a viscosity solution of \eqref{prob_visc} in the sense of Definition \ref{def_visc}.
\end{proposition}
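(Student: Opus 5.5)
The plan is to reduce the vectorial viscosity conditions of Proposition \ref{prop_visc} to scalar ones for the positive components $u_{i_j}$, via the boundary Harnack principle. Define $g_j$ as the boundary value of the ratio $u_{i_j}^2/(C_j|U_j|^2)$ suitably normalized. Concretely, for each $i\in A_j$, Theorem \ref{theor_part_harn} applied with $u=u_{i_j}>0$ and $v=u_i$ in $\Omega_{U_j}\cap B_r(x_0)$ shows that $u_i/u_{i_j}$ extends to a H\"older continuous function $\phi_i$ on $B_r(x_0)\cap\partial\Omega_{U_j}$. We then set
\begin{equation}
g_j:=\Big(\sum_{i\in A_j}\phi_i^2\Big)^{-1}=\lim_{x\to y}\frac{u_{i_j}^2}{|U_j|^2}(x),\qquad y\in B_r(x_0)\cap\partial\Omega_{U_j}.
\end{equation}
This is H\"older continuous, since $\sum\phi_i^2\ge\phi_{i_j}^2=1$. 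It also satisfies $g_j\le 1$ trivially and $g_j\ge c_0:=C^{-2}>0$ by the bound $Cu_{i_j}\ge|U_j|$ of Lemma \ref{lemma_constant_sign}.

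Next we verify the touching conditions. Suppose $Q$ touches $u=u_{i_1}-u_{i_2}$ from below (or above) at $x\in\partial\Omega_{U_1}\cup\partial\Omega_{U_2}$. The idea is to build a test function for $\tilde U=|U_1|-|U_2|$ by multiplying $Q^\pm$ by $g_j(x)^{-1/2}$, up to a small error, as in \cite[Lemma 3.10--3.11]{MTV_reg_vect} and \cite[Lemma 5.3]{MTV_reg_spec}. By the H\"older continuity of the ratios, near $x$ we have
\begin{equation}
|U_j|=u_{i_j}\,g_j(x)^{-1/2}\big(1+O(|y-x|^\beta)\big)\quad\text{in }\Omega_{U_j}.
\end{equation}
Take $\tilde Q:=g_1(x)^{-1/2}Q_\epsilon^+-g_2(x)^{-1/2}Q_\epsilon^-$, where $Q_\epsilon$ is $Q$ perturbed by $\mp\epsilon|y-x|$ (or by a quadratic/strict term) so that the touching becomes strict. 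This $\tilde Q$ then touches $\tilde U$ from below (resp. above) at $x$, possibly after absorbing the $O(|y-x|^{1+\beta})$ error. That absorption is legitimate because $u_{i_j}$ is Lipschitz, so $u_{i_j}|y-x|^\beta=o(|y-x|)$, which the $\epsilon|y-x|$ perturbation dominates.

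We then apply Proposition \ref{prop_visc} to $\tilde Q$. Since $|\nabla\tilde Q^\pm(x)|^2=g_j(x)^{-1}|\nabla Q_\epsilon^\pm(x)|^2$, condition (A.1) for $\tilde U$ becomes $|\nabla Q^+(x)|^2\le g_1\Lambda_1+O(\epsilon)$, and similarly in every other case. The conditions $Q^\pm\equiv0$ near $x$ transfer directly. Letting $\epsilon\to0$ yields (A.1)--(B.3) of Definition \ref{def_visc_gj}, with the two-phase relation becoming $|\nabla Q^+|^2-|\nabla Q^-|^2\lessgtr g_1\Lambda_1-g_2\Lambda_2$.

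The main obstacle is the two-phase and branching points, where both phases meet and the two error terms must be controlled simultaneously on both sides. Sign issues also arise where $Q^+$ or $Q^-$ vanishes identically. I would handle these as in \cite[Lemma 2.5]{DPSV_branch}: first reduce, via the one-homogeneous flat blow-ups of Proposition \ref{prop_blo_up_flat}, to comparing gradients of the blow-ups. At a contact point the blow-up of $u_{i_j}$ is $\sqrt{g_j\Lambda_j}$-proportional to $(y\cdot\nu)^\pm$ (coming from $|C_j|=\sqrt{\Lambda_j}$ and the ratio limits). The touching inequalities then follow by comparing the linear blow-up of $Q$ with these half-planes, exactly as in the scalar two-phase case.
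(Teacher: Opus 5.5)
Your route is the paper's. The paper also defines the weights through the boundary-Harnack ratios $u_i/u_{i_j}$ (Theorem \ref{theor_part_harn}), gets H\"older continuity and the lower bound from $Cu_{i_j}\ge|U_j|$ (Lemma \ref{lemma_constant_sign}), and transfers Proposition \ref{prop_visc} to $(u_{i_1},u_{i_2})$ by citing \cite[Lemma 3.11]{MTV_reg_vect}. Your normalization $g_j=\lim u_{i_j}^2/|U_j|^2$ is the square of the paper's $g$, whose displayed formula should read $(1+\sum_{i\neq i_j}g_i^2)^{-1/2}$. Yours is the one consistent with $|\nabla u_{i_j}|^2=g_j\Lambda_j$ in \eqref{prob_visc_gj}.

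Two steps of your transfer need correcting. First, perturbing by $\mp\epsilon|y-x|$ makes $\tilde Q$ non-differentiable at $x$, so it is not an admissible test function, and an additive quadratic term is too weak to absorb $L|y-x|^{1+\beta}$. Since the error is relative, $|U_j|=u_{i_j}\big(g_j(x)^{-1/2}+O(|y-x|^\beta)\big)$, a multiplicative perturbation works: from below take $\tilde Q=(g_1(x)^{-1/2}-\epsilon)Q^+-(g_2(x)^{-1/2}+\epsilon)Q^-$, and reverse the signs of $\epsilon$ from above. Using $Q^+\le u_{i_1}$ and $Q^-\ge u_{i_2}$, this touches $\tilde U$ near $x$ and keeps the gradients up to $O(\epsilon)$.

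Second, and more seriously, at contact points the rescaling does not give what you claim. Since $|\nabla\tilde Q^{+}(x)|^2=g_1^{-1}|\nabla Q^{+}(x)|^2$ and $|\nabla\tilde Q^{-}(x)|^2=g_2^{-1}|\nabla Q^{-}(x)|^2$, conditions (A.3) and (B.3) of Definition \ref{def_visc} become
\[
g_1^{-1}|\nabla Q^+(x)|^2-g_2^{-1}|\nabla Q^-(x)|^2\le\Lambda_1-\Lambda_2\quad(\text{resp. }\ge).
\]
This is not equivalent to $|\nabla Q^+|^2-|\nabla Q^-|^2\le g_1\Lambda_1-g_2\Lambda_2$ when $g_1\neq g_2$. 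For example, take $g_1=1$, $g_2=\frac12$, $\Lambda_1=\Lambda_2=1$, $|\nabla Q^+|^2=2$, $|\nabla Q^-|^2=1$. Then the weighted inequality and $|\nabla Q^-|^2\ge g_2\Lambda_2$ both hold, but the stated inequality fails. The one-sided bounds only close the gap if $g_1\le g_2$ in (A.3), respectively $g_1\ge g_2$ in (B.3).

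So at two-phase and branching points your last paragraph is not a refinement; it is the actual argument. For $x\in\partial\Omega_{U_1}\cap\partial\Omega_{U_2}$, Proposition \ref{prop_blo_up_flat} with $x_h=x$, together with the H\"older continuity of $u_{i_j}/|U_j|$ up to the boundary, gives blow-ups $\sqrt{g_1\Lambda_1}(y\cdot\nu)^+$ and $\sqrt{g_2\Lambda_2}(y\cdot\nu)^-$. Comparing with the blow-up of $Q$ pins $|\nabla Q^+(x)|^2\le g_1\Lambda_1$ and $|\nabla Q^-(x)|^2\ge g_2\Lambda_2$ separately (reversed from above), and Definition \ref{def_visc_gj} follows.

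Be aware that this uses the full strength of $|C_j|=\sqrt{\Lambda_j}$ in Proposition \ref{prop_blo_up_flat}. The stationarity condition \eqref{eq_shape_der_blow_up} alone only gives $|C_1|^2-|C_2|^2=\Lambda_1-\Lambda_2$, which is exactly the information in the weighted inequality above.
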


\begin{proof}
In view of Proposition \ref{prop_visc}, we can follow the proof of \cite[Lemma 3.11]{MTV_reg_vect}. 
Let $r_1>0$ be such that in $B_{2r_1}(x_0)$ both Theorem \ref{theor_part_harn} and Lemma \ref{lemma_constant_sign} holds with $Cu_{i_j} \ge |U_j|$ for some $i_j \in A_j$. In particular, for any $x \in B_{r_1}(x_0)\cap \partial \Omega_{U_j}$ we may define the $\beta$-H\"older continuous function 
\begin{equation}
g_i: B_{r_1}(x_0) \cap \partial \Omega_{U_j} \to \R, \quad g_i(x):=\lim_{y \to x, y \in \Omega_{u_j}}u_i(y)/u_{i_j}(y).
\end{equation}
We may extend the definition of $g_i$ to $B_{r_1}(x_0) \cap \Omega_{U_j}$ as $g_i:=u_i/u_{i_j}$, which, by Theorem \ref{theor_part_harn}, is still  $\beta$-H\"older continuous. Furthermore,  it follows that
\begin{equation}
u_i= g_iu_{i_j} \text{ on } B_{r_1}(x_0) \cap \overline{\Omega}_{U_j},
\quad \text{ and } \quad  u_{i_j}= g |U_j| \text{ on } B_{r_1}(x_0) \cap \overline{\Omega}_{U_j},
\end{equation}
where $g:=(1+\sum_{i\in A_j,i\neq i_j} u_i^2)^{-1/2}$. By definition, $g:B_{r_1}(x_0)\cap \partial \Omega_{U_j} \to \R$ is $\beta$-H\"older continuous and takes values in $[C^{-1},1]$ since $Cu_{i_j} \ge |U_j|$.

Hence, we can proceed as in \cite[Lemma 3.11]{MTV_reg_vect} to deduce that  $(u_{i_1},u_{i_2})$ is a viscosity solution of \eqref{prob_visc_gj} from  
Proposition \ref{prop_visc}.
\end{proof}

By Proposition \ref{prop_visc_gj}, Theorem \ref{theo_reg_diri}  follows from \cite[Theorem 1.1]{DPSV_branch}. To be more precise, we notice that \cite[Theorem 1.1]{DPSV_branch} only deals with constant weights on $\partial \Omega_{U_1}, \partial \Omega_{U_2}$ but it can be adapted to the case of strictly positive and H\"older continuous weights, as it was done for instance in \cite{D_impr} for the one-phase problem.

{\bf Acknowledgments.}
G.Siclari. is supported by the GNAMPA project E53C25002010001  \emph{Asymptotic analysis of variational problems}. B. Velichkov was supported by the European Research Council's (ERC) project n.853404 ERC VaReg - \it Variational approach to the regularity of the free boundaries \rm, financed by the program Horizon 2020. 
B. Velichkov also acknowledges the MIUR Excellence Department Project awarded to the Department of Mathematics (CUP I57G22000700001) and also the support from the project MUR-PRIN “NO3” (n.2022R537CS).

\bibliographystyle{acm}
\bibliography{references}	
\end{document}